\newenvironment{keywords}{
    \small
    \textbf{Keywords:}
}{
    \par
}
\newcommand{\amsclass}[1]{
    \textbf{AMS subject classifications:} #1
}
\DeclareOldFontCommand{\bf}{\normalfont\bfseries}{\mathbf}
\DeclareOldFontCommand{\cal}{\normalfont\bfseries}{\mathcal}
\newtheorem{theorem}{Theorem}[section]
\newtheorem{lemma}[theorem]{Lemma}
\newtheorem{proposition}[theorem]{Proposition}
\newtheorem{corollary}[theorem]{Corollary}
\newtheorem{assumption}[theorem]{Assumption}
\theoremstyle{definition}
\newtheorem{definition}[theorem]{Definition}
\newtheorem{remark}[theorem]{Remark}
\newtheorem{condition}[theorem]{Condition}
\crefname{condition}{condition}{conditions}
\numberwithin{equation}{section}
\newcommand{\ceil}[1]{\lceil{#1}\rceil}
\renewcommand{\d}{\mathrm{d}}
\newcommand{\e}{\mathrm{e}}
\newcommand{\opt}{\widetilde{\pi}}
\newcommand{\sumj}{\sum\limits_{j \neq i}}
\newcommand{\sumall}{\sum\limits_{j=1}^n}
\def \E{\mathbb{E}}
\def \F{\mathbb{F}}
\def \G{\mathbb{G}}
\def \H{\mathbb{H}}
\def \L{\mathbb{L}}
\def \N{\mathbb{N}}
\def \P{\mathbb{P}}
\def \Q{\mathbb{Q}}
\def \R{\mathbb{R}}
\def \S{\mathbb{S}}
\def\Ac{\mathcal{A}}
\def\Bc{\mathcal{B}}
\def\Ec{\mathcal{E}}
\def\Fc{\mathcal{F}}
\def\Gc{\mathcal{G}}
\def\Ic{\mathcal{I}}
\def\Tc{\mathcal{T}}
\def\Yc{\mathcal{Y}}
\def\Zc{\mathcal{Z}}
\DeclareMathOperator*{\esssup}{ess\,sup}
\title{Constrained portfolio game with heterogeneous agents}
\author{%
  Zongxia Liang\thanks{Department of Mathematical Sciences, Tsinghua University, Beijing, 100084 People's Republic of China. Email: \protect\url{liangzongxia@mail.tsinghua.edu.cn}}
  \and
  Keyu Zhang\thanks{Department of Mathematical Sciences, Tsinghua University, Beijing, 100084 People's Republic of China. Email: \protect\url{zhangky21@mails.tsinghua.edu.cn}}
  \and
  Yaqi Zhuang\thanks{Department of Mathematical Sciences, Tsinghua University, Beijing, 100084 People's Republic of China. Email: \protect\url{zyq23@mails.tsinghua.edu.cn}}
}
\date{}
\begin{document}
\maketitle
% REQUIRED

\begin{abstract}
We investigate stochastic utility maximization games under relative performance concerns in both finite-agent and infinite-agent (graphon) settings.  An incomplete market model is considered where agents with power (CRRA) utility functions trade in a common risk-free bond and individual stocks driven by both common and idiosyncratic noise. 
The Nash equilibrium for both settings is characterized by forward-backward stochastic differential equations (FBSDEs) with a quadratic growth generator, where the solution of the graphon game leads to a novel form of infinite-dimensional McKean-Vlasov FBSDEs. Under mild conditions, we prove the existence of Nash equilibrium for both the graphon game and the $n$-agent game without common noise.
Furthermore, we establish a convergence result showing that, with modest assumptions on the sensitivity matrix, as the number of agents increases, the Nash equilibrium and associated equilibrium value of the finite-agent game converge to those of the graphon game.
\end{abstract}

% REQUIRED
\begin{keywords}
Stochastic graphon games, FBSDE, Mckean-Vlasov equations
\end{keywords}

\amsclass{91A06, 91A07, 91A15}

\section{Introduction}
This paper contributes to both the theory of finite population and graphon games and to optimal portfolio management under competition and relative performance criteria. We construct a novel class of competitive and relative performance optimal investment problems for agents with power (CRRA) utility, applicable to both a finite number and a continuum of agents.

The finite-population financial market consists of $n$ agents trading in a common risk-less bond with interest rate $r=0$ and an individual $d$-dimensional vector of stocks. The price of each stock is driven by a common market noise and an idiosyncratic noise. Precisely, the agent $i$ trades in a $d$-dimensional vector of stocks  with price $S^i$ following the dynamics
\begin{align}
    \label{eqn:stocks-n-agent}
    \d S_t^{i} = \mathrm{diag}(S^i_t)\big(\mu^{i}_t \d t + \sigma_t^i\d W^{i}_t  + \sigma^{*i}_t \d W^*_t\big), 
\end{align}
where $W^i$ is a $d$-dimensional standard Brownian motion describing
the idiosyncratic noise and $W^*$ is a (one-dimensional) standard Brownian motion describing the common noise to all stocks. 
$\{W^i, i \geq 1 \}$ and $W^*$ are independent on the probability space $(\Omega, \mathcal{F}, \P)$. $\mu^i_t \in \R^d$, $\sigma_t^i \in \R^{d\times d}$ and $\sigma^{*i}_t \in \R^{d}$ are assumed to be bounded stochastic processes and predictable with respect to $\F^n:= (\Fc^n_t)_{t \in [0, T]}$, where $\F^n$ is defined as the $\P$--completion of the natural filtration of $\{W^i,  W^*, i=1,\cdots,n\}$.

Fixed a common time horizon $T>0$, all agents aim to maximize their expected utility at time $T$ in an incomplete market. The risk preferences of the agents are assumed to be characterized by power utility functions. The utility functions $U_1,\cdots, U_n$ are agent-specific functions concerning both their own terminal wealth and the performance of their peers. To be precise, agent $i$ aims to choose a self-financing strategy
$\pi^{i}= \{\pi^{i}_t : t \in [0, T]\}$, which is constrained within heterogeneous closed convex sets $A_i$, to solve
\begin{align}
\label{eqn:utility-n-agent}
\sup_{\pi^i} \E\left[U_i\left(X^{i}_{T}, \overline{X}^{-i}_{T}\right)\right]
=\sup_{\pi^i} \E\left[\frac{1}{\gamma^{i}} \left(X_T^{i, \pi^i}\left(\overline{X}^{-i}_{T}\right)^{-\rho}\right)^{\gamma^{i}}\right],
\end{align}
where the wealth process $X^{i, \pi^i}$ follows
\begin{align}
\label{eqn:wealth-n-agent}
  \mathrm{d} X_t^{i, \pi^{i}} = \pi^i_t X_t^{i, \pi^i} \cdot \left(\mu^i_t \mathrm{d} t + \sigma_t^i\mathrm{d} W_t^i + \sigma_t^{*i} \mathrm{d} W_t^{*} \right),\quad X^{i,\pi^i}_0 = x^i,
\end{align}
where $\overline{X}^{-i}_{T}:=\prod_{j \neq i}(X_T^{j,\pi^j})^{\frac{1}{n-1}\lambda_{ij}}$ is the weighted geometric average of the other agents' terminal values, $x^i$ is the initial wealth, $\gamma^i \in (-\infty,1)\backslash\{0\}$ is the degree of risk aversion of agent $i$ and $\rho\in (0,1]$ models the interaction weight. In particular, the term $\lambda_{i j} \in [0,1]$ measures agent $i$'s sensitivity to agent $j$'s wealth $(i \neq j)$. The objective is to find a Nash equilibrium $(\widetilde{\pi}^{1}, \widetilde{\pi}^{2}, \cdots, \widetilde{\pi}^{n})$ where $\widetilde{\pi}^{i}$ represents the optimal strategy for agent $i$, and no agent has the incentive to deviate from their strategy unilaterally.

The infinite-population case is formulated by introducing the graphon game framework. For a more rigorous probabilistic setup related to this introduction, please refer to \Cref{setup:graphon}. Below, we will present the problem intuitively, omitting certain technical details for clarity. In the context of an infinite-agent graphon game, each agent within the continuum is labeled by  $u \in I:=[0,1]$. The interaction among the continuum of agents will be modeled by a graphon, which is a symmetric and measurable function
\begin{align*}
    G: I \times I\to I.
\end{align*} 
As $\eqref{eqn:utility-n-agent}$ can be rewritten as
\begin{align}
\label{eqn:utility-n-agent-log}
\sup_{\pi^i} \E \bigg[ \frac{1}{\gamma^{i}}\exp\bigg\{\gamma^{i}\bigg(\hat{X}_T^{i,\pi^i} - \rho\sum\limits_{j \neq i} \frac{\lambda_{ij}}{n-1} \hat{X}_T^{j,\pi^j}\bigg)\bigg\} \bigg],
\end{align}
where $\hat{X}:=\log(X)$, inspired by the spirit of the mean field game, in the continuum case, the problem of agent $u$ becomes
\begin{align}
    \label{eqn:graphon-utility}
    \sup_{\pi^u} \E\left[\frac{1}{\gamma^u}\exp\left(\gamma^u\left(\hat{X}_T^{u,\pi^u} - \E\Big[ \rho\int_I \hat{X}_T^{v, {\pi}^v} G(u,v) \mathrm{d} v \big|\Fc_T^* \Big] \right)\right) \right],
\end{align}
where $\F^* := (\Fc_t^*)_{t \in [0,T]}$ denoted the $\P$--completion of the filtration generated by $W^*$. It is worth noting that the admissible strategy sets for problems $\eqref{eqn:utility-n-agent-log}$ and $\eqref{eqn:graphon-utility}$ differ. The key distinction lies in that in problem $\eqref{eqn:utility-n-agent-log}$, the strategy $\widetilde{\pi}^{i}$ is a $\F^n$-predictable process, whereas in problem $\eqref{eqn:graphon-utility}$, the strategy $\widetilde{\pi}^{u}$ is a $\F^u$-predictable process, where $\F^u$ denoted the completion of the filtration generated by $(W^u, W^*)$. Intuitively, in the $n$-agent setting, each agent can make decisions based on the information of all agents. In contrast, in the infinite-population case, each agent makes decisions based only on their own information and public information. The precise definitions of the admissible strategy sets in both cases can be found in \Cref{setup: n-agent} and \Cref{setup:graphon}. As in the case of the $n$-agent setting, our objective remains to find the Nash equilibrium $(\widetilde \pi^u)_{u\in I}$.

Using the martingale optimality principle, we can establish a one-to-one correspondence between Nash equilibrium with certain regularity conditions and a system of (quadratic) FBSDEs, which is an extension of the classical methods in papers \cite{a:hu2005utility} and   \cite{a:rouge2000pricing}. Thus, the problem of finding the Nash equilibrium is reduced to solving the FBSDEs.

The pioneering work on portfolio games with relative utility under finite-population was introduced by \cite{a:espinosa2010stochastic, a:espinosa2015optimal}. This work inspired a large body of literature, such as \cite{a:frei2011financial}, \cite{a:lacker2019mean}, \cite{a:lacker2020many}, and \cite{a:frei2014splitting}.
In \cite{a:espinosa2010stochastic, a:espinosa2015optimal, a:frei2011financial, a:frei2014splitting}, all agents are assumed to trade in common stocks. Using the BSDE approach, \cite{a:espinosa2010stochastic, a:espinosa2015optimal} established the existence and uniqueness of Nash equilibrium for general utility functions in a complete market and the existence and uniqueness of Nash equilibrium for exponential utility under incomplete markets with deterministic market parameters. \cite{a:frei2011financial, a:frei2014splitting} consider a similar problem as \cite{a:espinosa2010stochastic, a:espinosa2015optimal}. In \cite{a:frei2011financial}, by constructing a counterexample, it was demonstrated that in some cases, equilibrium does not exist, i.e., the BSDE characterizing the equilibrium has no solution. \cite{a:frei2014splitting} showed that multi-dimensional quadratic BSDEs are typically solvable only locally (i.e., solutions exist only within small time intervals), defined the concept of a split solution for BSDEs, and provided sufficient conditions for the existence of such solutions. \cite{a:lacker2019mean} extended the assumption of agents trading common stocks in \cite{a:espinosa2010stochastic, a:espinosa2015optimal, a:frei2011financial, a:frei2014splitting} and studied a model with asset specialization where both idiosyncratic and common noise drives stock prices. In a market with constant parameters, using PDE methods, \cite{a:lacker2019mean} proved the existence and uniqueness of constant equilibrium. \cite{a:lacker2020many} further incorporated consumption and relative consumption considerations into the CRRA model presented in \cite{a:lacker2019mean}. \cite{a:hu2022n} studied portfolio games in the context of It\^o's diffusion. \cite{a:fu2020mean} extended the framework of \cite{a:lacker2019mean} by incorporating stochastic market parameters and obtain the well-posedness of the $n$-agent game under exponential utility. Additionally,   \cite{a:dos2021forward},  \cite{a:dos2022forward} and  \cite{a:anthropelos2022competition} extended the study of this problem through the concept of forward performance, introduced by \cite{a:musiela2008optimal}.

The mean field game theory, first introduced independently by  \cite{a:lasry2007mean}, \cite{a:huang2007invariance}, provides a theoretical framework for analyzing individual decision-making in large-scale multi-agent systems. In recent years, it has been widely applied in fields such as economics, finance, engineering, and control theory. With the symmetry assumption, the complex interactions in the large population game can be approximated by the mean field, simplifying the original problem both mathematically and computationally. In the aforementioned works,  \cite{a:lacker2019mean} and \cite{a:lacker2020many} were the first to introduce the mean field and derive the mean field equilibrium under deterministic coefficients; \cite{a:fu2020mean} and  \cite{a:fu2023mean} subsequently obtained the mean field equilibrium under random coefficients for CARA and CRRA utilities, respectively. 
A recent paper by \cite{a:tangpi2024optimal} further extended this problem by allowing each agent to have different sensitivity factors towards other agents. This is a realistic assumption, as funds with different sizes, risk levels, or investment directions are often not compared directly; funds typically aim to outperform competitors within the same category. This heterogeneity among agents limits the applicability of the mean field game theory, and \cite{a:tangpi2024optimal} instead considers introducing the graphon game framework for further study. The graphon game framework allows for a more flexible representation of the complex relationships between agents, potentially leading to more accurate models of competitive investment scenarios.

The concept of graphons was first introduced by \cite{a:lovasz2006limits} to study the limit objects of dense graph sequences. \cite{a:lovasz2012large} serves as the standard reference for graphon theory. This theory provides a powerful framework for analyzing large graphs by examining their limiting behavior. Graphon games, the application of graphon theory in game theory, were proposed by \cite{a:parise2019graphon, a:parise2023graphon} and are particularly suitable for the study of large-scale network games. In the field of graphon game research, \cite{a:parise2019graphon},  \cite{a:carmona2022stochastic} conducted studies in static environments, while \cite{a:8619367, a:caines2019graphon} focused on well-posedness in dynamic environments and established an $\varepsilon$-Nash theory connecting equilibrium of infinite population games with those of finite population games. Furthermore, \cite{a:aurell2022stochastic} explored stochastic graphon games in linear-quadratic environments. Additional research in this area can be found in \cite{a:gao2021lqg}, \cite{a:lacker2023label}, \cite{a:bayraktar2023propagation}, \cite{a:amini2023stochastic}.
 \cite{a:tangpi2024optimal} was the first to introduce graphon games with common noise.

In this paper, we investigate portfolio games with power utility functions. Our setting differs from \cite{a:fu2023mean} by incorporating heterogeneous interactions among agents and trading constraints. It also differs from \cite{a:tangpi2024optimal} by using CRRA utility instead of exponential utility. Notably,
in the exponential utility case, \cite{a:tangpi2024optimal} only considers cases that are without constraints or without common noise. In the former case, the generator of graphon FBSDE system in \cite{a:tangpi2024optimal} degenerates from quadratic to linear, whereas in our setting, the generator remains quadratic in both cases.

The main contributions of this paper are summarized as follows:
\begin{itemize}
    \item We derive explicit characterizations of Nash equilibrium in both finite-agent and graphon utility maximization games.
   \item We prove the existence of Nash equilibrium for the graphon game.
    \item We prove the existence of Nash equilibrium for the $n$-agent game in the absence of common noise.
    \item We show that if the sensitivity matrix in the finite-agent game converges to a graphon in a suitable sense, then the heterogeneous finite-agent game converges to the graphon game. Specifically, the sequence of Nash equilibrium and associated equilibrium value (up to a subsequence) converge to those of the graphon equilibrium.
\end{itemize}
For proving the existence of Nash equilibrium in the graphon game, we adopt an approach similar to \cite{a:fu2023mean}, transforming the FBSDEs into BSDEs and applying the contraction mapping argument. It's worth noting that   \cite{a:fu2023mean} focused on one-dimensional FBSDEs and BSDEs in their mean field framework, while our work addresses infinite-dimensional graphon FBSDEs and BSDEs due to agent heterogeneity, leading to more complex proofs. 
The proof of existence for Nash equilibrium in the $n$-agent game draws inspiration from \cite{a:fu2020mean}. We consider the difference between two BSDEs, using the solution to graphon BSDEs as a benchmark, and apply the contraction mapping theorem to complete the proof. Notably, this proof leads to the convergence of $n$-agent game Nash equilibrium to graphon Nash equilibrium, establishing a crucial link between finite-agent and graphon games.

The remainder of this paper is organized as follows. After introducing the necessary notation, we present the $n$-agent game model in \Cref{setup: n-agent} and the graphon game model in \Cref{setup:graphon}. \Cref{sec:main.results} outlines our main results, which include the existence of Nash equilibrium for both the $n$-agent and graphon games, as well as the convergence results between them. In \Cref{sec:charac}, we formulate the FBSDE characterizations for both types of games.
\Cref{sec:proofs} is dedicated to proofs: \Cref{subsec:characterization} proves the theorems presented in \Cref{sec:charac},  \Cref{subsec:graphon} and \Cref{subsec:n-agent} demonstrate the existence of Nash equilibrium for the graphon and $n$-agent games respectively, and \Cref{subsec:convergence} establishes the convergence results.

\textbf{Notation.}
Now we introduce some spaces and norms that will be used throughout this paper.
Fix a generic finite-dimensional normed vector space $(E,\|\cdot\|_E)$, let $\G$ be a filtration, and $\Gc$ a sub-$\sigma$-algebra of $\Fc$ . 

$\bullet$ For any $p\in[1,\infty]$, $\L^p(E,\Gc)$ is the space of $E$-valued, $\Gc$--measurable random variables $R$ such that 
\begin{align*}
&\|R\|_{\L^p(E,\Gc)}:=\Big(\E\big[\|R\|_E^p\big]\Big)^{\frac1p}<\infty,\ \text{when}\ p<\infty,\\
    &\|R\|_{\L^\infty(E,\Gc)}:=\inf\big\{\ell\geq0:\|R\|_E\leq \ell,\; \P\text{\rm --a.s.}\big\}<\infty.
\end{align*}

$\bullet$ For any $p\in[1,\infty)$, $\H^p(E,\G)$ is the space of $E$-valued, $\G$-predictable processes $Z$ such that 
\begin{align*}
\|Z\|_{\H^p(E,\G)}^p:=\E\bigg[\bigg(\int_0^T\|Z_s\|_E^2\mathrm{d}s\bigg)^{\frac{p}{2}}\bigg]<\infty.
\end{align*}

$\bullet$ $\H_{\mathrm{BMO}}\left(E,\G\right)$ (or simply $\H_{\mathrm{BMO}}$ ) is a subset of $\H^{2}(E,\G)$ satisfying
\begin{align*}
\|Z\|_{\H_{\mathrm{BMO}}\left(E,\G\right)}^2:=\sup_{\tau \in \Tc(\G)} \bigg\lVert\E\bigg[\int_\tau^T\|Z_s\|_E^2\mathrm{d}s \bigg|\Fc_\tau\bigg]\bigg\rVert_{\L^\infty(E,\Gc_T)}<\infty,
\end{align*}
where $\Tc(\G)$ is the set of $\G$-stopping times with values in $[0,T]$. For any $\kappa>0$, we define a norm equivalent to the $\|\cdot\|_{\H_{\mathrm{BMO}}\left(E,\G\right)}$ as follows:
\begin{align*}
\|Z\|_{\H_{\mathrm{BMO},\kappa }\left(E,\G\right)}^2:=\sup_{\tau\in\Tc(\G)} \bigg\lVert\E\bigg[\int_\tau^T \e^{\kappa s}\|Z_s\|_E^2\mathrm{d}s\bigg|\Fc_\tau\bigg]\bigg\rVert_{\L^\infty(E,\Gc_T)}.
\end{align*}

$\bullet$ For any $p\in[1,\infty]$, $\S^p(E,\G)$ is the space of $E$-valued, continuous, $\G$-adapted processes $Y$ such that 
\begin{align*}
&\|Y\|_{\S^p(E,\G)}:=\bigg(\E\bigg[\sup_{t\in[0,T]}\|Y_t\|_E^p\bigg]\bigg)^{\frac1p}<\infty,\  \text{when}\  p<\infty,\\ &\|Y\|_{\S^\infty(E,\G)}:=\bigg\|\sup_{t\in[0,T]}\|Y_t\|_E\bigg\|_{\L^\infty(E,\Gc_T)}<\infty.    
\end{align*}

$\bullet$ Throughout this paper, $C(\cdot)$ denotes a generic positive constant that depends on the parameters inside the parentheses. This constant may vary from line to line. No explicit distinction will be made unless it is necessary for clarity. In some cases, to differentiate between constants, we may use $C^{(i)}(\cdot)$ to represent specific constants.

$\bullet$ $\delta$ represents the indicator function, which is defined as:
\begin{equation*}
\delta_{(a,b]}(x) = \begin{cases}
1, & \text{if } x \in (a,b], \\
0, & \text{otherwise}.
\end{cases}    
\end{equation*}

When the probability measure in the definition of these norms is different, specifically for a probability measure $\Q$ defined on $(\Omega,\Fc)$, we append the subscript $\Q$ to denote these spaces as $\L^p_{\Q}(E,\Gc)$, $\H^p_{\Q}(E,\G)$, $\H_{\mathrm{BMO},\Q}\left(E,\G\right)$, and $\S^p_{\Q}(E,\G)$.

\section{Probabilistic setting and main results}
\label{sec:setup-result}

In this section, building on the introduction, we rigorously define the corresponding $n$-agent and graphon game problems and present the main results of this paper.

\subsection{The \texorpdfstring{$n$}{n}-agent game} \label{setup: n-agent}
\begin{assumption}
\label{assm:n-agent}
Throughout, we assume that the $\F^n$-predictable stochastic processes $\mu^i$, $\sigma^i$ and $\sigma^{*i}$ are bounded coefficients. Denoting $\Sigma_t^i:= (\sigma_t^i, \sigma_t^{*i})$, we further assume that for all $i\in \{1,\cdots,n\}$, the matrix $\Sigma^i(\Sigma^i)^\top$ is uniformly elliptic in the sense that there exist constants $K>\varepsilon>0$ such that $KI_d\ge \Sigma^i(\Sigma^i)^\top\ge \varepsilon I_d$ holds $\P$--a.s..
\end{assumption}

Define the process $\theta^i =\{\theta_t^i: t\in [0,T] \} $  by 
\begin{align*}
    \theta_t^i :=(\Sigma_t^i)^\top(\Sigma_t^i(\Sigma_t^i)^\top)^{-1}\mu_t^i.
\end{align*}
Based on \Cref{assm:n-agent}, we know that $\theta$ is bounded. For notational simplicity, in the subsequent proof, we will use $\|\theta\|$ to denote $\|\theta\|_\infty$ when no ambiguity arises.
The equation for the wealth process $X^{i, \pi^i}$ can be transformed from \eqref{eqn:wealth-n-agent} to
\begin{align}
\label{eqn:wealth-n-agent-1}
  \d X_t^{i, \pi^{i}} =   \pi^i_t X_t^{i, \pi^i} \cdot \left(\Sigma_t^{i} \theta_t^i \d t + \sigma_t^i\d W_t^i  +  \sigma_t^{*i} \d W_t^{*} \right),\quad X^{i,\pi^i}_0 = x^i.
\end{align}
For brevity, let us introduce the following notation:
\begin{align*}
\lambda_{ij}^n:=\frac{1}{n-1}\lambda_{ij}\quad \text{with}\quad \lambda_{ii} := 0.
\end{align*}
The optimization problem for agent $i$ thus takes the form
\begin{align}\begin{split}
\label{eqn:n-agent obj}
 V_0^{i,n}
    &:=V_0^{i,n}((\pi^j)_{j\neq i})  \\
    &:= \sup_{\pi^i \in \Ac_i} \E \bigg[ \frac{1}{\gamma^{i}}\exp\bigg\{\gamma^{i}\bigg(\hat{X}_T^{i,\pi^i} - \rho\sum\limits_{j \neq i} \lambda_{ij}^n \hat{X}_T^{j,\pi^j}\bigg)\bigg\} \bigg],
    \end{split}
\end{align}
where  $\Ac_i$ denotes the set of admissible strategies for agent $i$ defined by the following. 
\begin{definition}[Admissibility]
\label{defn: admissible strategy}
Let $A_i$ be a closed convex subset of $\R^d$, referred to as a constraint set.
A strategy $\pi^i$ for agent $i$
is admissible if $\pi^i\in\H_{\mathrm{BMO}}(A_{i},\F^n)$.
In this case, we denote $\pi^i \in \Ac_i$.
\end{definition}

At this point, the definition of Nash equilibrium is as follows.
\begin{definition}
    A vector $(\opt^{1}, \opt^{2}, \cdots, \opt^{n})$\footnote{We mention that
  Nash equilibrium also depends on $n$ and should be denoted as $(\opt^{1,n}, \opt^{2,n},$ $\cdots,\opt^{n,n})$. However, for notational simplicity, we will use the abbreviated form $(\opt^{1}, \opt^{2}, \cdots, \opt^{n})$ when it does not lead to ambiguity.
} of admissible strategies in $\Ac_1 \times \Ac_2 \times \cdots \times \Ac_n$  is a Nash equilibrium if for every $i = 1,\cdots, n$, 
    the strategy $\opt^{i}$ is a solution to the portfolio optimization problem \eqref{eqn:n-agent obj}.
    That is, for each $i$, 
    \begin{align*}
        V^{i,n}_0((\opt^j)_{j\neq i}) =  \E \bigg[ \frac{1}{\gamma^{i}}\exp\bigg\{\gamma^{i}\bigg(\hat{X}_T^{i,\opt^i} - \rho\sum\limits_{j \neq i} \lambda_{ij}^n \hat{X}_T^{j,\opt^j}\bigg)\bigg\} \bigg].
    \end{align*}
\end{definition}

\begin{remark}
In \cite{a:tangpi2024optimal}, $(\lambda_{ij})_{1\leq i,j\leq n}$ were set as independent random variables following  Bernoulli distributions. In this paper, we set $(\lambda_{ij})_{1\leq i,j\leq n}$ as constants, eliminating the stochastic element. This simplification is made for technical reasons, while still capturing the heterogeneous interactions among agents. 
\end{remark}

\subsection{The graphon game}\label{sec: graphon game}
\label{setup:graphon} 
The graphon game framework is similar to that in \cite{a:tangpi2024optimal}.  In the Introduction, we have provided a general overview. The rigorous probabilistic setup is as follows.

Consider $\Bc_I$ as the Borel $\sigma$-field of $I$ and $\mu_I$ as the Lebesgue measure on $I$. Given a probability space $(I, \Ic, \mu)$ that extends the usual Lebesgue measure space $(I, \Bc_I, \mu_I)$, and the sample space $(\Omega, \Fc, \P)$, consider a rich Fubini extension $(I \times \Omega, \Ic \boxtimes \Fc, \mu \boxtimes \P)$ of the product space $(I \times \Omega, \Ic \otimes \Fc, \mu \otimes \P)$. Please refer to  \cite{a:sun2006exact} for a comprehensive introduction to the theory of rich Fubini extensions. 
Let $C([0,T];\R^d)$ denote the space of continuous functions from $[0,T]$ to $\R^d$. 
According to \cite{a:sun2006exact},
we construct $\Ic \boxtimes \Fc$--measurable processes $W: I \times \Omega \rightarrow C([0,T],\R^d) $ with \emph{essentially pairwise independent} (e.p.i.)\footnote{Here, following \cite[Definition 2.7]{a:sun2006exact}, essentially pairwise independent means that for $\mu$-almost every $u \in I$ and $\mu$-almost every $v \in I$, the processes $W^u$ and $W^v$ are independent.}, and identically distributed random variables $(W^u)_{u \in I}$, such that  for each $u \in I$, the process $W^u = (W_t^u)_{0 \leq t \leq T}$ is a $d$-dimensional Brownian motion supported on the probability space $(\Omega, \Fc, \P)$. 
Additionally, assume that the probability space $(\Omega, \Fc, \P)$
also supports a one-dimensional Brownian motion $W^*$ independent of $(W^u)_{u \in I}$. 

\begin{remark} \label{rmk: fubini property}
According to \cite[Lemma 2.3]{a:sun2006exact}, we have the standard Fubini property on the rich product space $(I \times \Omega, \Ic \boxtimes \Fc, \mu \boxtimes \P)$, meaning we can freely exchange the order of integration, i.e.,  given a measurable and integrable function $f$ on $(I \times \Omega, \Ic \boxtimes \Fc, \mu \boxtimes \P)$, we have
\begin{align*}
    \int_{I\times \Omega}f(u,\omega)\mu\boxtimes \P(\d \omega,\d u) = \int_I\E[f(u)]\mu(\d u) = \E\Big[\int_If(u)\mu(\d u)\Big].
\end{align*}
This property will be frequently used in the proofs without further reference.
In addition, we will simplify the notation by writing $\mu(\d u) \equiv \d u$.
\end{remark}

In addition to the filtrations $\F^u$ and $\F^*$ defined in the introduction, we introduce a new filtration $\F$, which denotes the completion of the filtration generated by $((W^u)_{u \in I}, W^*)$. With these fundamental probabilistic structures in place, we now provide a rigorous definition of the strategy profile.
\begin{definition}
    A strategy profile is a family $(\pi^u)_{u \in I}$ of $\F^u$-progressive processes taking values in $\R^d$ and such that $(t, u, \omega)\mapsto\pi^u_t(\omega)$ is $\Bc([0,T])\otimes \Ic \boxtimes \Fc$--measurable.
\end{definition}

Similar to the case with $n$-agent, the stocks price $S^u$ following the dynamics
\begin{align}
    \label{eqn:stocks-graphon}
    \d S^u_t = \mathrm{diag}(S^u_t)(\mu^u_t\d t + \sigma^u_t\d W^u_t + \sigma^{*u}_t\d W^*_t),\quad u\in I,\ \  t\in [0,T],
\end{align}
and the wealth process of agent $u$ under strategy $\pi^u$ satisfies the dynamics
\begin{align}
\label{eqn:wealth-graphon}
   \d X_t^{u, \pi^u} &= \pi_t X_t^{u, \pi^u} \cdot \left(\Sigma^u_t\theta^u_t \d t  + \sigma_t^u\d W_t^u  +   \sigma_t^{*u} \d W_t^* 
   \right), \ \  t\in [0,T], \quad X^u_0 = x^u,
\end{align}
where the processes $\Sigma$ and  $\theta$ are respectively defined by
\begin{align*}
    \Sigma^u_t := (\sigma^u_t, \sigma^{*u}_t)\quad \text{and}\quad \theta^u_t:= {\Sigma^u_t}^\top\big(\Sigma_t^u{\Sigma^u_t}^\top\big)^{-1}\mu^u_t,  \ \  t\in [0,T].
\end{align*} 

In this case, we also have an assumption similar to \Cref{assm:n-agent}.
\begin{assumption}
    \label{assm:graphon}
    Assume that $\mu$, $\sigma$ and $\sigma^*$ are $\mathcal{B}([0,T])\otimes\Ic \boxtimes \Fc$--measurable stochastic processes on $[0,T]\times I \times \Omega$, uniformly bounded in $u\in I$, respectively. For $\mu$--almost every $u\in I$, $\mu^u$, $\sigma^u$ and $\sigma^{*u}$ are $\F^u$--predictable, and $(\mu^u)_{u \in I}$, $(\sigma^u)_{u \in I}$, and $(\sigma^{*u})_{u \in I}$ are e.p.i.. Additionally, we assume that $\Sigma_t^u{\Sigma^u_t}^\top$ are uniformly elliptic, and $x^u:I \rightarrow \R$ is $\Ic$--measurable and assume to be bounded uniformly in $u\in I$. 
\end{assumption}

We further impose the following assumption.
\begin{assumption}
    \label{assm:gamma-graphon}
    Let the mapping $\gamma: I \rightarrow (-\infty,1)\backslash\{0\}$ be $\Ic$--measurable and bounded away from zero uniformly in $u$, i.e., $0<\underline{\gamma}\leq|\gamma^u|$ for some constants $\underline{\gamma}$. Moreover, we assume that there exist constants $0<\overline{\gamma}<1$ and $0<\tilde{\gamma}$ such that $\gamma^{u}\leq \overline{\gamma}$ and $|\gamma^{u}|\leq\tilde{\gamma}$ for $u\in I$. To be precise, the mapping $\gamma$ is valued on $(-\tilde{\gamma},-\underline{\gamma})\cup(\underline{\gamma},\overline{\gamma})$.  
\end{assumption}

According to $\eqref{eqn:graphon-utility}$, the optimization problem for agent $u$ takes the form
\begin{align}\begin{split}
    \label{eqn:graphon-obj}
    V_0^{u,G} &= V_0^{u,G}\left( (\pi^v )_{v \neq u}\right) \\
    & := \sup_{\{ \pi^u | (\pi^v )_{v \in I} \in  \Ac_{G}\}} \E\left[\frac{1}{\gamma^u}\exp\left(\gamma^u\left(\hat{X}_T^{u,\pi^u} - \rho\E\Big[\int_I \hat{X}_T^{v, {\pi}^v} G(u,v) \d v \big|\Fc_T^* \Big] \right)\right) \right],
\end{split}\end{align}
where the set of admissible strategies $\Ac_G$ in the graphon game is defined by the following. 
\begin{definition}
    For any $u \in I$, let $A_u$ be a closed convex subset of $\R^d$. 
    A strategy profile $(\pi^u)_{u \in I}$ is admissible if for $\mu$--almost every $u \in I$, it holds $\pi^u\in \H_{\mathrm{BMO}}(A_u,\F^u)$.
    In this case, we will say that $(\pi^u)_{u \in I} \in \Ac_G$.
\end{definition}

The definition of the \emph{graphon Nash equilibrium} is as follows:
\begin{definition}
    A family of admissible strategy profiles $(\widetilde \pi^u)_{u\in I}$ is called a graphon Nash equilibrium if for $\mu$--almost every $u$, the strategy $\widetilde \pi^u$ is optimal for \eqref{eqn:graphon-obj} with $(\pi^v)_{v\neq u}$ replaced by $(\widetilde \pi^v)_{v\neq u}$.
    That is, 
    \begin{align*}
        V_0^{u,G}\left( (\widetilde\pi^v)_{v \neq u}\right)  =  \E\left[\frac{1}{\gamma^u}\exp\left(\gamma^{u}\left(\hat{X}_T^{u,\widetilde\pi^u} - \rho\E\Big[\int_I \hat{X}_T^{v, {\widetilde\pi}^v} G(u,v) \d v \Big|\Fc_T^* \Big] \right)\right) \right].
    \end{align*}
\end{definition}

Before presenting the results, we align the probabilistic settings of the $n$-agent problem and the graphon problem, as done by  \cite[Remark 2.10]{a:tangpi2024optimal}.
\begin{remark}
To maintain consistency with our graphon game notation, we rebrand the sequence of d-dimensional Brownian motions from \Cref{setup: n-agent} by $\{W^{\frac{i}{n}}: i= 1,\cdots, n\}$. This ensures that the completed natural filtration generated by $\{W^{\frac{i}{n}}: i= 1,\cdots, n\}$ and $W^*$ is a subfiltration of $\F$. Consequently, all indices $i\in \mathbb{N}$ from the $n$-agent section should now be interpreted as $\frac{i}{n}$.
Now, the coefficients in the game of the $n$-agent, $\{(\sigma^{i},\sigma^{* i}, \theta^{i},\gamma^i): i= 1, \cdots, n\}$, become $\{(\sigma^{\frac{i}{n}},\sigma^{* \frac{i}{n}},\theta^{\frac{i}{n}}, \gamma^{\frac{i}{n}}): i=1,\cdots, n\}$. 
For notational simplicity, we will retain the original indexing in subsequent sections. 
This rebranding will reappear in the results and proofs related to the convergence problem.
\end{remark}

\subsection{Main results}
\label{sec:main.results}
In this subsection, we present the main results of this paper: 
\Cref{thm:graphon-game.existence}, \Cref{thm:n-game.existence} and \Cref{thm:main.limit}, which address the existence of a Nash equilibrium in both the graphon game and the $n$-agent game, as well as the convergence of the finite-agent game to the graphon game.

\subsubsection{Existence of the graphon game}
We first consider the graphon game case. To establish the existence results, we impose the following condition.
\begin{condition}\label{cd:P}
There exists a constant $C_{0} \geq 0$ such that
\begin{align*}
|P^u_t(x)|& \leq |x|+C_0, \quad \forall \ x\in \R^d , u\in [0,1],
\end{align*}
where $P_t^u(\zeta)$  denotes the projection of a vector $\zeta$ onto the constraint set $(\Sigma^u_t)^\top A_u$.
\end{condition}

\begin{remark}
\label{rmk:P}
{
This condition is quite natural. In fact, due to the Lipschitz property of the projection operator, it holds as long as the projection is uniformly bounded at zero. A sufficient condition is that the zero vector belongs to $A_u$ for all $u\in [0,1]$. This is equivalent to assuming that each  agent is allowed to invest solely in risk-free bonds, which is a reasonable assumption. In particular, under this condition, $C_0=0$.}
\end{remark}

Now we state the first result.
\begin{theorem}
\label{thm:graphon-game.existence}
Assume that \Cref{cd:P} holds. Then there exist a positive constant $\rho^{*}$ such that for all $0\leq\rho\leq\rho^{*}$, the graphon game admits a graphon Nash equilibrium in $[0,T]$.
\end{theorem}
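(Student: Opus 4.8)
The plan is to reduce the existence of a graphon Nash equilibrium to the solvability of the associated infinite-dimensional (quadratic) FBSDE system that characterizes equilibria — this characterization is established in \Cref{sec:charac}, so I may take it for granted. By the martingale optimality principle, for each agent $u$ the optimal strategy is obtained by projecting (a shifted version of) the driver's linear term onto the constraint set $(\Sigma^u_t)^\top A_u$, and the coupling enters only through the terminal condition $\rho\,\E[\int_I \hat X_T^{v,\pi^v}G(u,v)\d v\mid\Fc^*_T]$. Following the strategy of \cite{a:fu2023mean}, I would first decouple: fix a candidate family $(m^u)_{u\in I}$ representing the conditional-on-$\Fc^*_T$ aggregate quantities each agent sees, solve the resulting family of one-agent quadratic BSDEs (whose well-posedness and BMO estimates follow from \cite{a:hu2005utility}-type arguments, using \Cref{assm:graphon}, \Cref{assm:gamma-graphon} and \Cref{cd:P} to control the quadratic growth), read off the optimal wealth $\hat X^u$ and hence the induced aggregate $\Phi((m^u)_u)^u := \rho\,\E[\int_I \hat X_T^{v}G(u,v)\d v\mid \Fc^*_T]$, and then show $\Phi$ is a contraction on an appropriate closed ball of an infinite-dimensional space (e.g. a space of $\Ic\boxtimes\Fc$-measurable processes with a $\sup_u$-type BMO or $\S^\infty$ norm) once $\rho$ is small enough; its fixed point yields the equilibrium.

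The key steps, in order, are: (1) Write down precisely the FBSDE/BSDE system characterizing equilibrium from \Cref{sec:charac}; perform the logarithmic change of variables so the terminal coupling is linear in $\hat X_T$. (2) For a frozen aggregate profile $(m^u)_u$, establish existence, uniqueness, and uniform (in $u$) a priori bounds — in $\S^\infty$ for the $Y$-component and $\H_{\mathrm{BMO}}$ for the $Z$-component — for the decoupled family of quadratic BSDEs, with the BMO norm bounded by a constant independent of the frozen data (this is where \Cref{cd:P} enters, guaranteeing the projected drift grows at most linearly, hence the generator is quadratic with controlled constants). (3) Use the Fubini property (\Cref{rmk: fubini property}) to make sense of $\int_I \hat X^v_T G(u,v)\d v$ and its conditional expectation, and verify measurability in $u$ of the whole construction so that $\Phi$ maps the chosen Banach space into itself. (4) Establish stability: estimate $\hat X^u_T - \hat X'^u_T$ (equivalently the difference of the two BSDE solutions) in terms of the difference of the frozen aggregates, getting a Lipschitz constant proportional to $\rho$ times a universal constant; since $\|G\|_\infty\le 1$, choosing $\rho^* $ small makes $\Phi$ a strict contraction. (5) Apply the Banach fixed point theorem in the infinite-dimensional space, then translate the fixed point back through the FBSDE characterization to a genuine graphon Nash equilibrium, checking admissibility ($\pi^u\in\H_{\mathrm{BMO}}(A_u,\F^u)$ for a.e.\ $u$).

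The main obstacle I anticipate is Step (2)–(4) carried out \emph{uniformly in the label $u$} in an infinite-dimensional setting: unlike the mean-field case of \cite{a:fu2023mean} where one scalar BSDE suffices, here one must handle a continuum of coupled quadratic BSDEs, control the BMO norms by constants that do not depend on $u$ nor on the frozen aggregate, and ensure all the joint measurability in $(t,u,\omega)$ survives the BSDE solution map and the conditional expectation given $\Fc^*_T$. In particular, the stability estimate for quadratic BSDEs typically requires an a priori BMO bound and then an argument via the energy/John–Nirenberg inequality or a change of measure (Girsanov with a BMO martingale); making the resulting constant uniform in $u$ and tracking its dependence on $\rho$ precisely is the delicate technical core. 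A secondary subtlety is that the terminal coupling is conditioned on $\Fc^*_T$ and the aggregate involves an integral over labels of $\F^v$-measurable quantities, so one must be careful that $\Phi$'s output is $\Fc^*_T$-measurable and that the contraction is set up in a norm compatible with this conditioning.
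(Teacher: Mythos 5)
There is a genuine gap at the heart of your Steps (2)--(4). You propose to freeze the aggregate $m^u=\rho\,\E\big[\int_I \hat X^{v}_T G(u,v)\,\d v\,\big|\,\Fc^*_T\big]$ and solve, for each $u$, a single-agent quadratic BSDE with this quantity in the terminal condition, invoking \cite{a:hu2005utility}-type results for well-posedness and BMO estimates. But in the presence of common noise the frozen aggregate is an \emph{unbounded} $\Fc^*_T$-measurable random variable: $\hat X^{v}_T$ contains stochastic integrals against $W^*$ with merely BMO integrands (and a drift integral of the same order), and conditioning on $\Fc^*_T$ does not remove them. So the decoupled BSDEs have unbounded terminal data, where the bounded-terminal-value theory of \cite{a:hu2005utility} does not apply; the unbounded theory (\cite{a:briand2008quadratic}) gives neither uniqueness in general nor the BMO bounds on $Z$ that you need both for admissibility ($\pi^u\in\H_{\mathrm{BMO}}(A_u,\F^u)$) and for the stability estimate in your Step (4). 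Consequently the contraction you sketch for $\Phi$ cannot be closed in the norm you would need, and the "Lipschitz constant proportional to $\rho$" claim has no supporting estimate.

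The paper avoids exactly this obstruction by a different decoupling. Via \Cref{lemma:correspondence} (the analogue of the transformation in \cite{a:fu2023mean}), the graphon FBSDE \eqref{eqn:graphon-FBSDE} is converted, through the explicit relation \eqref{eqn:graphon:relation}, into the BSDE system \eqref{eqn:graphon:eqbsde} whose terminal condition is the \emph{constant} $-\rho\gamma^u\int_I\log(x^v)\,\d v$; the interaction no longer sits in the terminal condition but enters the generator through the other agents' $Z$-processes, which are controlled in BMO balls. This requires an extra ingredient your plan does not anticipate: because of the common noise, the transformed $\tilde Z^{*u}$ is defined only implicitly through an equation coupling all $(Z^{*v})_{v\in I}$, and \Cref{lemma:g} is needed (again for $\rho$ small) to invert this relation with linear growth and Lipschitz bounds. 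The contraction is then run on $(\overline Z,\overline Z^*)$ in a (weighted) BMO ball after comparing with the $\rho=0$ benchmark BSDE and performing a Girsanov change of measure with a BMO density (\Cref{thm:R-exist}), rather than on terminal aggregates. If you want to salvage your scheme, you would either have to restrict to the no-common-noise case (where the frozen aggregate is deterministic and your plan essentially works) or develop uniform-in-$u$ BMO stability for quadratic BSDEs with unbounded $\Fc^*_T$-measurable terminal data, which is precisely what the paper's change of variables is designed to avoid.
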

\begin{remark}\label{rmk:2.10}
    The weak interaction assumption, that is, $\rho$ is small, is essential for  establishing the well-posedness result via the standard contraction mapping approach. Analogous assumptions have been observed in \cite{a:tangpi2024optimal}, \cite{a:fu2020mean} and \cite{a:fu2023mean}.
\end{remark}

\subsubsection{Existence of the \texorpdfstring{$n$}{n}-agent game}
The second result establishes the existence of an equilibrium in the $n$-agent game. Due to technical limitations, we only prove existence in the absence of common noise. 

In parallel with \Cref{cd:P}, a mild condition is required for the $n$-agent case.
\begin{condition}\label{cd:P-n}
There exists a constant $C_{0} \geq 0$ such that
\begin{align*}
|P^i_t(x)|& \leq |x|+C_0, \quad \forall \ x\in \R^d , 1\leq i\leq n,
\end{align*}
where $P_t^i(\zeta)$  denotes the projection of a vector $\zeta$ onto the constraint set $(\sigma^i_t)^\top A_i$.
\end{condition}

\begin{theorem}
\label{thm:n-game.existence}
Assume there is no common noise and that \Cref{cd:P-n} holds.
Then there exist positive constants $\rho^{*}$ and  $T^*$ such that for all $0\leq\rho\leq\rho^{*}$ and $0< \widetilde{T} \leq T^*$, the $n$-agent game admits a Nash equilibrium in $[0,\widetilde{T}]$.
\end{theorem}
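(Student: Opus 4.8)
\textbf{Proof proposal for \Cref{thm:n-game.existence}.}

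The plan is to exploit the fact that we are working without common noise ($\sigma^{*i}\equiv 0$, $\Sigma^i=\sigma^i$) and treat the $n$-agent equilibrium FBSDE system as a perturbation of the graphon FBSDE system whose solvability is already guaranteed by \Cref{thm:graphon-game.existence}. As sketched in the Introduction, I would first invoke the FBSDE characterization (proved in \Cref{sec:charac}) to reduce the existence of a Nash equilibrium to the solvability of the coupled $n$-dimensional system of quadratic BSDEs for $(Y^i,Z^i)_{i=1}^n$, whose generators involve the projection operators $P^i_t$ and whose terminal conditions couple agent $i$ to the weighted geometric average $\sum_{j\neq i}\lambda_{ij}^n\hat X^j_T$. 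Since there is no common noise, each agent's optimization, given the others' strategies, is a classical constrained utility maximization problem à la \cite{a:hu2005utility}, so the subtle point is purely the fixed-point/coupling across agents, not the single-agent well-posedness.

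The core step is a contraction argument on the product space $\prod_{i=1}^n \H_{\mathrm{BMO}}(\R^d,\F^n)$ (equivalently on the $Z$-components, with an appropriately chosen $\kappa$ in the equivalent BMO norm $\|\cdot\|_{\H_{\mathrm{BMO},\kappa}}$), following \cite{a:fu2020mean}. Concretely, I would fix a candidate profile of "other agents' contributions", solve each agent's decoupled quadratic BSDE (using \Cref{cd:P-n} and Assumption~\ref{assm:n-agent} to get existence, uniqueness and a priori BMO bounds that are \emph{uniform in $n$} because $\sum_{j\neq i}\lambda_{ij}^n\le 1$), and then feed the resulting wealth/strategy processes back in to define a map $\Phi$ on the product space. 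To show $\Phi$ is a contraction one estimates the difference of two solutions: write the BSDE for $\delta Y^i = Y^{i,(1)}-Y^{i,(2)}$, use the local Lipschitz property of the quadratic generator together with BMO bounds (Kazamaki's inequality / the John–Nirenberg-type estimates for $\H_{\mathrm{BMO}}$) to linearize, and absorb the coupling terms, which carry a factor $\rho$, into a contraction constant. Smallness of $\rho$ controls the off-diagonal coupling and smallness of $\widetilde T$ controls the Gronwall-type constants coming from the quadratic term; together they make the Lipschitz constant of $\Phi$ strictly less than one. The benchmark trick — measuring everything relative to the graphon solution from \Cref{thm:graphon-game.existence} rather than relative to zero — is what keeps the a priori bounds uniform and, as the authors note, simultaneously yields the convergence result of \Cref{thm:main.limit}.

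The main obstacle I anticipate is the quadratic growth of the generator combined with the high (n-)dimensionality: multidimensional quadratic BSDEs need not be globally solvable (this is exactly the Frei–dos Reis phenomenon cited via \cite{a:frei2011financial, a:frei2014splitting}), which is why the theorem is only local in time. Making the contraction work therefore requires carefully tracking how the BMO norms of the $Z^i$ enter the Lipschitz estimates and choosing $T^*$ depending only on the uniform ellipticity constants $\varepsilon, K$, the bound on $\theta$, $C_0$, $\overline\gamma$, $\tilde\gamma$, $\underline\gamma$ — but \emph{not} on $n$ — so that the fixed point exists on a common interval $[0,\widetilde T]$ for every $n$. A secondary technical point is verifying that the fixed point of $\Phi$ indeed corresponds to an \emph{admissible} Nash equilibrium, i.e. that the candidate strategies $\widetilde\pi^i = P^i$ applied to the solution lie in $\H_{\mathrm{BMO}}(A_i,\F^n)$ and that the verification (martingale optimality) argument from \Cref{sec:charac} applies; this is where \Cref{cd:P-n} is used a second time, to guarantee the projected strategies retain BMO integrability.
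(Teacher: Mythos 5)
Your proposal follows essentially the same route as the paper: reduce via the FBSDE characterization to a coupled system of quadratic BSDEs (which the paper further converts, via Lemma~\ref{lemma:noncom-n-correspondence}, into BSDEs with zero terminal condition), then measure the $n$-agent solution against the graphon benchmark from \Cref{thm:graphon-game.existence} and run a contraction argument in $\H_{\mathrm{BMO}}$ with $\rho$, $\widetilde T$ (and the ball radius) chosen small, uniformly in $n$, so that the same estimates also deliver \Cref{thm:main.limit}. This matches the paper's proof in structure and in all the key points (uniform-in-$n$ constants, use of \Cref{cd:P-n} for admissibility, local-in-time nature due to multidimensional quadratic BSDEs), so no substantive discrepancy to report.
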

\begin{remark}
    The necessity of $\rho^*$ is analogous to that discussed in \Cref{rmk:2.10} and will not be reiterated here. However, unlike the graphon game, only local well-posedness can be expected in the $n$-agent game, even in the absence of constraints. As indicated in \cite{a:frei2014splitting}, multidimensional quadratic BSDEs are generally only locally solvable.
\end{remark}

\subsubsection{Convergence}
The proof of \Cref{thm:n-game.existence} naturally leads to a corollary concerning convergence. Specifically, we derive a convergence result for the equilibrium as $n$ tends to infinity, provided there is an appropriate relationship between the sensitivity parameters $(\lambda_{ij})_{1 \leq i,j \leq n}$ of the $n$-agent problem and their graphon counterpart $G(u,v)$.

Let us consider the usual $\L^2$ norm on graphons, which is defined as
\begin{align*}
	\|G\|_2:=\bigg( \int_{I\times I}|G(u,v)|^2 \d u \d v\bigg)^{\frac{1}{2}}.
\end{align*}

We impose the following assumptions.
\begin{condition}\label{cdn:G}
There exists a sequence of graphons $(G_n)_{n\ge1}$ such that:
\begin{enumerate}
\item[(1)] the graphons $G_n$ are step functions, i.e., they satisfy
\begin{align*}
G_n(u,v) = G_n\Big(\frac{\ceil{nu}}{n},\frac{\ceil{nv}}{n}\Big) \quad \text{for} \ (u,v) \in I \times I,\  \text{and for every } n\in \N,
\end{align*}
\item[(2)]   $\lambda_{ij} = \lambda_{ji} =  G_n(\frac{i}{n},\frac{j}{n})$  for $1 \leq i,j \leq n$,
\item[(3)] 
$\max\limits_{1\leq i\leq n} \max\limits_{u\in (\frac{i-1}{n},\frac{i}{n}]} \int_I |G(\frac{i}{n},v)-G(u,v)|^2 \d v\rightarrow 0$, as $n\rightarrow \infty$,
\item[(4)] $n\|G_n-G\|_2^2 \rightarrow 0$, as $n\rightarrow \infty$.
\end{enumerate}
\end{condition}

\begin{remark}
The graphons $G_n$ introduced above are called step graphons, which are piecewise constant. In essence, this condition can be decomposed into three parts:
Conditions (1) and (2) transform the $n$-agent sensitivity parameters $\lambda_{ij}$ into step graphons.
Condition (3) outlines the technical requirements on the function $G$ that are essential for subsequent proofs.
Condition (4) serves as the core condition linking the $n$-agent and graphon frameworks.

It is worth noting that Condition (3) is not particularly stringent for a graphon. Common graphons, such as the Uniform Attachment Graphon $U(x,y) = 1 - \max(x,y)$, satisfy this condition. In fact, all continuous graphons on $I \times I$ fulfill this requirement.
\end{remark}

Furthermore, we need to impose additional technical requirements on the parameters.
\begin{condition}\label{cdn:Convergence}
Assume that 
\begin{enumerate}
\item[(1)] $\max\limits_{1\leq i\leq n}
    \max\limits_{u\in (\frac{i-1}{n},\frac{i}{n}]}\|\theta^{\frac{i}{n}}-\theta^{u}\|_{\mathrm{BMO}} \rightarrow 0$, as $n\rightarrow \infty$,
\item[(2)] $\max\limits_{1\leq i\leq n}
    \max\limits_{u\in (\frac{i-1}{n},\frac{i}{n}]}|\gamma^{\frac{i}{n}}-\gamma^u| \rightarrow 0$, as $n\rightarrow \infty$,
\item[(3)] $\max\limits_{1\leq i\leq n}
    \max\limits_{u\in (\frac{i-1}{n},\frac{i}{n}]}|\log(x^{\frac{i}{n}})-\log(x^u)| \rightarrow 0$, as $n\rightarrow \infty$,
\item[(4)]  $\| \log(x)\|:= \sup\limits_{u\in [0,1]} |\log(x^u)| < \infty$.
\end{enumerate}
\end{condition}

Consequently, we establish the following convergence result.
\begin{theorem}
\label{thm:main.limit}
Assume that there is no common noise, $A^u$ is independent of $u$, that is the investment restrictions are the same for all agents and that \Cref{cdn:G} and \Cref{cdn:Convergence} hold.
Then there exist positive constants $\rho^{*}$ and  $T^*$ such that
for all $0\leq\rho\leq\rho^{*}$ and $0<\widetilde{T}\leq T^*$:
\begin{enumerate}
\item[(1)] The $n$-agent game admits a Nash equilibrium for any $n \geq 2$,
\item[(2)] The graphon game admits a graphon Nash equilibrium in $[0,\widetilde{T}]$,
\item[(3)] The $n$-agent Nash equilibrium $(\opt^{i,n})_{i \in \{1,\cdots,n\}}$ obtained in (1) converges to the graphon Nash equilibrium $(\opt^{u})_{u\in I}$ obtained in (2), in the sense that up to a subsequence, as $n\rightarrow \infty$,
\begin{align}
\label{eq:conv.statement1}
|\widetilde{\pi}_t^{i,n} - \widetilde{\pi}_t^{\frac{i}{n}}|
\longrightarrow 0, \quad (\d t \otimes \P)\text{--a.s.},\\
\label{eq:conv.statement2}
|V_0^{i,n}\left((\opt^{j,n})_{j\neq i}\right) - V_0^{\frac{i}{n},G}((\tilde{\pi}^v)_{v\neq \frac{i}{n}})| \longrightarrow 0.
\end{align}
\end{enumerate}
\end{theorem}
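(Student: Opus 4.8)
The plan is to deduce all three assertions from the FBSDE characterizations of the two games (Section \ref{sec:charac}) together with the existence results \Cref{thm:graphon-game.existence} and \Cref{thm:n-game.existence}, whose proofs (via contraction mappings on the associated BSDEs) are designed to be uniform in $n$. Assertion (2) is immediate from \Cref{thm:graphon-game.existence} once $\rho^*$ is chosen small enough; assertion (1) follows from \Cref{thm:n-game.existence} provided $T^*$ can be taken independent of $n$, which I would verify by inspecting the contraction constant in \Cref{subsec:n-agent}: under \Cref{cdn:G}(2) and \Cref{cdn:Convergence}, the ellipticity bounds $K,\varepsilon$, the bounds $\|\theta\|$, and $\overline\gamma,\underline\gamma,\tilde\gamma$ are all uniform in $n$ and $i$, so the $T^*$ produced there does not degrade with $n$. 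The substantive content is therefore assertion (3).

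For (3), the strategy is to run the contraction argument of \Cref{subsec:n-agent} on the \emph{difference} between the $n$-agent equilibrium BSDE system and the graphon equilibrium BSDE system, the latter restricted to the labels $u=\frac{i}{n}$. Concretely: let $(Y^{i,n},Z^{i,n})$ be the BSDE data characterizing $\widetilde\pi^{i,n}$ and let $(Y^{u},Z^{u})$ be the graphon BSDE data characterizing $\widetilde\pi^u$. I would estimate, in the $\S^\infty\times\H_{\mathrm{BMO}}$ (or $\S^2\times\H_{\mathrm{BMO}}$) norm, the quantity $\delta Y^i:=Y^{i,n}-Y^{\frac{i}{n}}$, $\delta Z^i := Z^{i,n}-Z^{\frac{i}{n}}$, bounding $\max_i$ of these norms. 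The driver of the difference equation splits into (a) a part that is Lipschitz/locally-Lipschitz in $(\delta Y,\delta Z)$ with small constant when $\rho\le\rho^*$ and $\widetilde T\le T^*$ — this is exactly the contraction gain already available — plus (b) a source term built from the mismatches $\theta^{\frac{i}{n}}-\theta^u$, $\gamma^{\frac{i}{n}}-\gamma^u$, $\log x^{\frac{i}{n}}-\log x^u$ and, crucially, the discrepancy between the finite average $\rho\sum_{j\ne i}\lambda^n_{ij}\hat X^{j,n}_T$ and the graphon aggregate $\rho\,\EE[\int_I \hat X^{v}_T G(\tfrac{i}{n},v)\,\d v\mid\Fc^*_T]$. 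I would control this last discrepancy by adding and subtracting the step-graphon aggregate $\rho\int_I \hat X^{\lceil nv\rceil/n}_T G_n(\tfrac{i}{n},v)\,\d v$: one term is a ``law of large numbers over e.p.i.\ idiosyncratic noises'' estimate of order $\tfrac1n\sum_j(\cdot)$ in $\L^2$ (using that without common noise the $\hat X^j_T$ are e.p.i. and uniformly integrable, so the variance of the empirical average is $O(1/n)$), a second term is bounded using \Cref{cdn:G}(3)--(4) (the $n\|G_n-G\|_2^2\to0$ hypothesis is precisely what makes the weighted-average mismatch $o(1)$ after the $\sqrt n$-type scaling coming from summing $n$ independent contributions), and a third term is absorbed by \Cref{cdn:Convergence}(1)--(3). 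Taking $\rho^*,T^*$ small and then $n\to\infty$ gives $\max_i(\|\delta Y^i\|+\|\delta Z^i\|)\to 0$. Since the equilibrium strategies are continuous functions of $(Z,\theta,\gamma)$ through the projection formula $\widetilde\pi = $ (a Lipschitz function of $(\Sigma^\top)^{-1}$ applied to $\theta+Z$-type quantities) established in \Cref{sec:charac}, convergence of the BSDE data plus \Cref{cdn:Convergence}(1)--(2) yields \eqref{eq:conv.statement1} along a subsequence where the $\d t\otimes\P$-a.s. convergence holds (passing from $\H_{\mathrm{BMO}}$ or $\H^2$ convergence to a.s. convergence costs a subsequence). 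For \eqref{eq:conv.statement2}, the equilibrium values are explicit in terms of $Y^i_0$ (resp. $Y^{\frac{i}{n}}_0$), $\gamma^i$ and $x^i$ — an exponential of an affine expression — so convergence of $\delta Y^i_0$ together with \Cref{cdn:Convergence}(2)--(4) and uniform bounds on $Y_0$ gives the claim directly, no subsequence needed beyond the one already extracted.

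The main obstacle I anticipate is step (b), specifically showing that the aggregate mismatch term is $o(1)$ \emph{uniformly in $i$} after it has been propagated through the BSDE. Two difficulties compound: first, the BMO nature of the problem means one cannot simply use $\L^2$-isometry; one needs the a priori BMO bounds on $Z^{i,n}$ (uniform in $n$, from the uniform contraction) to linearize the quadratic driver via an equivalent-measure / John–Nirenberg argument, so that Gronwall-type estimates apply. Second, the $\max_i$ over $n$ labels interacts badly with the $\L^2$ law-of-large-numbers bound unless one has control on the $\L^p$ moments of $\hat X^{j}_T$ for $p$ large (to convert an $O(1/n)$ variance bound for each $i$ into a bound for the maximum over $i$ that still vanishes); I expect this to require either a union bound with higher moments of the wealth processes (available since $\theta,\gamma$ are bounded and the strategies are BMO, giving exponential moments) or a more careful exchangeability argument exploiting \Cref{cdn:G}(4) directly at the level of $\EE[\max_i|\cdot|^2]\le \sum_i\EE|\cdot|^2$. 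Handling this cleanly, while keeping every constant independent of $n$, is where the real work lies; everything else is a controlled perturbation of the already-established fixed-point arguments.
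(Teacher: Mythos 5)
Your overall architecture --- parts (1) and (2) from the uniform-in-$n$ choice of $\rho^*,T^*$ in \Cref{thm:n-game.existence} and \Cref{thm:graphon-game.existence}, and part (3) from a stability estimate on the difference between the $n$-agent BSDE system and the graphon BSDE system evaluated at the labels $\tfrac{i}{n}$ --- is the same as the paper's, where the convergence is in fact already built into the existence proof of the $n$-agent game via the a priori bound \eqref{eqn:convergence-result}, whose right-hand side is $(\max_i A^{i,n})^2$. However, the central step of (3), namely the uniform-in-$i$ vanishing of the aggregate-mismatch source term, is exactly the step you leave open (you yourself flag it as ``where the real work lies''), and the route you sketch for it --- a stochastic law-of-large-numbers bound of variance order $1/n$ for the empirical average of the terminal wealths $\hat X^{j}_T$, upgraded to a bound on $\max_i$ by a union bound with higher (exponential) moments --- is both unproven in your write-up and not how the difficulty is resolved; as written, this is a genuine gap rather than a controlled perturbation.

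The paper's mechanism removes the difficulty you anticipate in two moves that your proposal does not make. First, the correspondence \Cref{lemma:noncom-n-correspondence} and \Cref{lemma:noncom-graphon-correspondence} absorb the terminal conditions $-\rho\gamma^i\overline{\hat X}^i_T$ (resp.\ the graphon aggregate of $\hat X^v_T$) into the new unknowns $\Yc$, so the transformed BSDEs \eqref{eq:noncom-n-bsde} and \eqref{eq:noncom-graphon-bsde} have \emph{zero} terminal condition and the interaction enters only through the drivers via $g^{j,n}$ and $g^v$; no empirical average of random terminal wealths ever has to be estimated, and the only wealth-dependent quantity left is the deterministic discrepancy in the initial data, handled by \Cref{lemma:x}. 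Second, the graphon aggregate $\int_I(\theta^v_t)^{\top}g^v_t\,G(\tfrac{i}{n},v)\,\d v$ is, by the exact law of large numbers on the rich Fubini extension (\cite[Theorem 2.16]{a:sun2006exact}), $\P$--a.s.\ equal to its expectation, so the mismatch $\Gamma^{i,n(1)},\Gamma^{i,n(2)}$ entering $A^{i,n}$ reduces to purely deterministic graphon-approximation errors: a diagonal term of order $\tfrac{1}{n}$, the step-graphon error $\int_I|G_n(\tfrac{i}{n},v)-G(\tfrac{i}{n},v)|\,\d v$ controlled uniformly in $i$ by \Cref{lemma:int-G} via \Cref{cdn:G}(3)--(4), and the label-continuity error $\|\Zc^{\frac{j}{n}}-\Zc^{v}\|_{\H_{\mathrm{BMO}}}$ plus the $\theta,\gamma$ mismatches, controlled uniformly by \Cref{lemma:Z} and \Cref{cdn:Convergence}. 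Consequently $\max_i A^{i,n}\to0$ (\Cref{lemma:A}) with no variance estimates, no union bound and no higher-moment control of the wealth processes; plugging this into \eqref{eqn:convergence-result} gives $\max_i\|\Delta\Zc^{ii,n}\|_{\H_{\mathrm{BMO}}}\to0$ and $\max_i\|\Delta\Yc^{i,n}\|_{\S^\infty}\to0$, from which \eqref{eq:conv.statement1} follows through the $1$-Lipschitz projection formula (your subsequence remark here is fine) and \eqref{eq:conv.statement2} follows from the explicit value formula together with \Cref{lemma:x}. To repair your proposal you should therefore perform the change of variables that kills the random terminal condition and invoke the exact LLN on the Fubini extension, rather than attempting a fluctuation analysis of the empirical wealth average.
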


Before proceeding further, we deals with the case with deterministic market coefficients, without trading constraints and common noise. In this simplified setting, we explicitly derive the Nash equilibrium for both the $n$-agent and graphon games, thereby enabling a direct verification of convergence.
\begin{proposition}\label{pro:limit.example}
Assume  that, for all $u \in I$,  $A_u=\mathbb{R}^d$, $\sigma^{* u}=0$ and $\sigma^u$, $\mu^u$ are deterministic measurable functions of time. Consider a slight modification of the utility maximization problem where $\lambda_{i i} \neq 0$, i.e., agent $i$ takes into account a weighted average of all agents' final wealths as their benchmark. Under this modification, the utility maximisation problem for agent $i$ now becomes
\begin{align*}\begin{split}
 V_0^{i,n}
    &=V_0^{i,n}((\pi^j)_{j\neq i})  \\
    &:= \sup_{\pi^i \in \mathbb{R}^d} \E \bigg[ \frac{1}{\gamma^{i}}\exp\bigg\{\gamma^{i}\bigg(\hat{X}_T^{i,\pi^i} - \rho\sum\limits_{j = 1}^n \lambda_{ij}^n \hat{X}_T^{j,\pi^j}\bigg)\bigg\} \bigg].
    \end{split}
\end{align*}
Then for all $n \in \mathbb{N}$, there is a Nash equilibrium $\left(\widetilde{\pi}^{i, n}\right)_{i \in\{1, \cdots, n\}}$ given by
\begin{align*}
    \sigma_t^i \widetilde{\pi}_t^{i, n}=
    \frac{\theta^i_t}{1-\gamma^i+\rho \gamma^i \lambda_{ii}^n} 
    \quad \text { for all }(n, i) \in \mathbb{N} \times\{1, \cdots, n\} \text { and a.s. } t .
\end{align*}
Furthermore, there is a graphon Nash equilibrium $\left(\widetilde{\pi}^u\right)_{u \in I}$ given by
\begin{align*}
\sigma_t^u \widetilde{\pi}_t^u=\frac{\theta^u_t}{1-\gamma^u} \quad \text { for a.s. }(u, t) \in I \times[0, T] .
\end{align*}
In particular, $\widetilde{\pi}^{i, n}$ and $\widetilde{\pi}^u$ are deterministic and we have
\begin{align*}
\left|\sigma_t^i \widetilde{\pi}_t^{i, n}-\sigma_t^{\frac{i}{n}} \widetilde{\pi}_t^{\frac{i}{n}}\right| \leq 
\frac{\rho \tilde{\gamma}\lambda_{ii}^n |\theta^{\frac{i}{n}}_t|}{|(1-\gamma^i)(1-\gamma^i+\rho \gamma^i \lambda_{ii}^n)|}
\quad \text { for all }(n, i) \in \mathbb{N} \times\{1, \cdots, n\} 
\text { and a.s. }t.
\end{align*}
\end{proposition}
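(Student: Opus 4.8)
The plan is to solve the problem directly by guessing a deterministic (constant-in-randomness) candidate equilibrium and verifying it via the single-agent verification theorem that underlies the FBSDE characterization in \Cref{sec:charac}. Since all coefficients are deterministic and $\sigma^{*u}=0$ (no common noise), the idiosyncratic-noise-only structure decouples the agents once the benchmark is fixed, and one can expect constant (in $\omega$) optimal strategies. First I would fix an arbitrary profile $(\pi^j)_{j\neq i}$ (or $j=1,\dots,n$ in the modified problem) and analyze agent $i$'s problem. Writing $\hat X^{i,\pi^i}_T = \log x^i + \int_0^T(\sigma^i_t\pi^i_t\cdot\theta^i_t - \tfrac12|\sigma^i_t\pi^i_t|^2)\d t + \int_0^T \sigma^i_t\pi^i_t\cdot\d W^i_t$, the objective inside the expectation is $\tfrac{1}{\gamma^i}\exp\{\gamma^i(\hat X^{i,\pi^i}_T - \rho\sum_j\lambda^n_{ij}\hat X^{j,\pi^j}_T)\}$. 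When the $\hat X^{j,\pi^j}_T$ for $j\neq i$ are driven only by $W^j$ (independent of $W^i$), these factor out of the $W^i$-expectation; when $j=i$ appears (the modified problem, $\lambda_{ii}\neq 0$), the $\hat X^{i,\pi^i}_T$ term combines with coefficient $(1-\rho\lambda^n_{ii})$... wait — more carefully, the exponent's $\hat X^i$-part carries coefficient $\gamma^i(1-\rho\lambda^n_{ii})$. I would then compute the $W^i$-conditional expectation explicitly: for a deterministic strategy $\pi^i$, $\int_0^T\sigma^i_t\pi^i_t\cdot\d W^i_t$ is Gaussian, so the expectation is a deterministic exponential whose exponent is a quadratic functional of $\pi^i$, namely (up to terms not involving $\pi^i$) $\gamma^i(1-\rho\lambda^n_{ii})\int_0^T(\sigma^i_t\pi^i_t\cdot\theta^i_t - \tfrac12|\sigma^i_t\pi^i_t|^2)\d t + \tfrac12\gamma^i{}^2(1-\rho\lambda^n_{ii})^2\int_0^T|\sigma^i_t\pi^i_t|^2\d t$.

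Next I would optimize this quadratic in $z_t:=\sigma^i_t\pi^i_t$ pointwise in $t$. Collecting the coefficient of $|z_t|^2$ gives $-\tfrac12\gamma^i(1-\rho\lambda^n_{ii}) + \tfrac12\gamma^i{}^2(1-\rho\lambda^n_{ii})^2 = -\tfrac12\gamma^i(1-\rho\lambda^n_{ii})\big(1-\gamma^i(1-\rho\lambda^n_{ii})\big)$, and the coefficient of $z_t$ (linear term) is $\gamma^i(1-\rho\lambda^n_{ii})\theta^i_t$. Since $A_u=\R^d$ there is no constraint, so the unconstrained optimizer of the quadratic $a z\cdot\theta - b|z|^2$ (with $a=\gamma^i(1-\rho\lambda^n_{ii})$, $b = \tfrac12\gamma^i(1-\rho\lambda^n_{ii})(1-\gamma^i(1-\rho\lambda^n_{ii}))$ — note the sign of the objective relative to $1/\gamma^i$ needs care depending on $\mathrm{sgn}(\gamma^i)$) is $z_t = \tfrac{\theta^i_t}{1-\gamma^i(1-\rho\lambda^n_{ii})} = \tfrac{\theta^i_t}{1-\gamma^i+\rho\gamma^i\lambda^n_{ii}}$. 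I should double-check the concavity/convexity sign so that this critical point is indeed the maximizer of $V_0^{i,n}$ (both signs of $\gamma^i$ and the assumption $\gamma^i<1$ conspire correctly, as in \cite{a:lacker2019mean}); this is a short case check. Crucially, the optimizer does not depend on the other agents' strategies at all, so the fixed-point condition for Nash equilibrium is automatically met — each agent's best response is the stated deterministic $\widetilde\pi^{i,n}$ regardless of the others. This proves the $n$-agent formula. The graphon formula is the special case $\lambda_{ii}\to$ the diagonal term of the graphon interaction, but in the graphon objective the benchmark is $\E[\int_I\hat X^{v,\pi^v}_T G(u,v)\d v\,|\,\Fc_T^*]$; with no common noise $\Fc_T^*$ is trivial, and by e.p.i. and Fubini the integral $\int_I\hat X^{v,\pi^v}_T G(u,v)\d v$ is deterministic (a law-of-large-numbers / exact-law averaging effect — the idiosyncratic noises average out), so it contributes nothing to the $W^u$-expectation beyond a constant. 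Hence agent $u$'s effective coefficient on its own $\hat X^u$ is just $\gamma^u$ (no self-interaction, since a single agent has measure zero), giving $\sigma^u_t\widetilde\pi^u_t = \tfrac{\theta^u_t}{1-\gamma^u}$.

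Finally, for the error bound I would just subtract the two explicit formulas at index $i$ versus $\tfrac{i}{n}$: writing $\sigma^i_t\widetilde\pi^{i,n}_t - \sigma^{i/n}_t\widetilde\pi^{i/n}_t = \tfrac{\theta^{i/n}_t}{1-\gamma^i+\rho\gamma^i\lambda^n_{ii}} - \tfrac{\theta^{i/n}_t}{1-\gamma^i}$ (using that after rebranding $\theta^i = \theta^{i/n}$, $\gamma^i = \gamma^{i/n}$, $\sigma^i=\sigma^{i/n}$), combine over a common denominator to get $\theta^{i/n}_t\cdot\tfrac{-\rho\gamma^i\lambda^n_{ii}}{(1-\gamma^i)(1-\gamma^i+\rho\gamma^i\lambda^n_{ii})}$, and bound $|\gamma^i|\leq\tilde\gamma$ and $\lambda^n_{ii}\geq0$ to reach $\tfrac{\rho\tilde\gamma\lambda^n_{ii}|\theta^{i/n}_t|}{|(1-\gamma^i)(1-\gamma^i+\rho\gamma^i\lambda^n_{ii})|}$, which is exactly the claimed inequality. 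Determinism of both equilibria is immediate since $\theta$, $\sigma$ are deterministic. The only genuine obstacle is the sign bookkeeping in the optimization step — ensuring the quadratic is maximized (not minimized) at the critical point for every admissible sign of $\gamma^i$, and confirming the Gaussian-exponential moment computation is finite (which holds because $\sigma^i\pi^i$ is bounded deterministic); everything else is algebra. I would also remark that this example sits outside \Cref{thm:n-game.existence}'s hypotheses only in that $\lambda_{ii}\neq0$ and $A_u=\R^d$ is unbounded, but the explicit solution makes the verification self-contained.
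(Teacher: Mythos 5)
Your candidate equilibrium, the sign discussion, and the final algebra (the common-denominator computation and the bound using $|\gamma^i|\le\tilde\gamma$) are all correct, and your reduction of the graphon benchmark to a constant via the exact law of large numbers (with $\Fc^*_T$ trivial in the absence of common noise) is the right observation. However, there is a genuine gap in the optimality step. What you actually compute is the value of the objective for a \emph{deterministic} deviation $\pi^i$ (Gaussian moment of $\int_0^T(\sigma^i_t)^\top\pi^i_t\cdot\d W^i_t$, then pointwise optimization of a quadratic in $t$). The game, however, allows deviations in $\H_{\mathrm{BMO}}(A_i,\F^n)$, i.e.\ stochastic strategies that may depend on \emph{all} the Brownian motions. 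For such a deviation the factorization of the expectation into "own term times benchmark term" that you invoke breaks down (the benchmark $\exp(-\gamma^i\rho\sum_{j\neq i}\lambda^n_{ij}\hat X^{j}_T)$ is no longer independent of the controlled part once $\pi^i$ is allowed to depend on $(W^j)_{j\neq i}$), and even restricting to $\F^i$-predictable but random $\pi^i$, the Gaussian computation no longer applies. So the assertions that the critical point is the best response "regardless of the others" and hence that the fixed-point condition is automatic are exactly the statements that still need proof; you announce a verification via the characterization of \Cref{sec:charac} at the outset but never carry it out, and the executed computation does not substitute for it. The claim is true, but closing it requires a martingale-optimality-principle (or HJB) verification over the full admissible class, including the case $\gamma^i<0$ and the modified objective with $\lambda_{ii}\neq 0$, which is not covered verbatim by \Cref{thm:n-FBSDE}.

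For comparison, the paper avoids redoing any optimization: it observes that $\Zc^{ij,n}\equiv 0$ and $\Zc^u\equiv 0$ solve the transformed BSDEs of \Cref{lemma:noncom-n-correspondence} and \Cref{lemma:noncom-graphon-correspondence} when the coefficients are deterministic, converts back through the stated relations to get $Z^{ii,n}_t=\frac{-\rho\gamma^i\lambda^n_{ii}\theta^i_t}{1-\gamma^i+\rho\gamma^i\lambda^n_{ii}}$ and $Z^u\equiv0$, and then reads off the equilibrium from \eqref{eqn:n-optimal} and \eqref{eqn:graphon-optimal}; the verification over all admissible strategies is already packaged in part (2) of \Cref{thm:n-FBSDE} and \Cref{prop:graphon-FBSDE}. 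If you want to keep your self-contained route, you should either quote/adapt that verification step (the supermartingale/martingale argument for $R^{\pi^i}$ with your explicit deterministic $Y,Z$), or prove directly that conditioning on $(W^j)_{j\neq i}$ and a standard Merton-type verification yield optimality of the deterministic strategy within the whole BMO class; as written, the proposal establishes optimality only among deterministic deviations.
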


\section{Characterizations of the utility maximization games}\label{sec:charac}
This section presents characterizations of the Nash equilibrium for the two aforementioned games in terms of solutions to FBSDEs. These characterizations are fundamental to the proof of our results.

\subsection{FBSDE characterization of the \texorpdfstring{$n$}{n}--agent game}
\label{sec:charac.n-agent}
The following theorem provides an FBSDE characterization for the $n$-agent utility maximization problem \eqref{eqn:n-agent obj}.
Specifically, it expresses the Nash equilibrium and the associated utilities as functions of solutions to a system of (quadratic) FBSDEs. This result extends the FBSDE characterization of \cite{a:fu2023mean} to include cases with portfolio constraints. 
For notational convenience, we define $\overline{\hat{X}}_t^i := \sumj \lambda_{ij}^n\hat{X}_t^{j,\tilde{\pi}^j}$.

To establish this characterization, we first introduce a regular class $R_p$, within which FBSDEs can characterize the equilibrium. This approach traces its origins to \cite{a:frei2011financial}, who demonstrated that the $R_p$ regularity condition is crucial for applying the powerful  BMO martingale techniques.

\begin{definition}
    \label{defn: reverse holder}
    For some $p>1$, we say that a stochastic process $D$ satisfies the reverse H\"{o}lder inequality $R_p$ (with respect to filtration $\G=(\Gc_t)_{t\in[0,T]}$) if there exists a constant $C$ such that for any  $\tau \in \Tc(\G)$, it holds that
    \begin{align*}
    \E\left[\left|\frac{D_T}{D\tau}\right|^p \bigg| \Gc_\tau\right] \leq C.
    \end{align*}
\end{definition}

\begin{theorem}
	\label{thm:n-FBSDE}
(1)	If the $n$-agent game admits a Nash equilibrium $(\opt^i)_{i\in \{1,\cdots,n\}}$,
such that for any $i$, there exists $p>1$ with 
{\small\begin{align}\label{reverse:n}
	 \E \bigg[ \frac{1}{\gamma^{i}}\exp\bigg\{\gamma^{i}\bigg(\hat{X}_T^{i,\opt^i} - \rho\overline{\hat{X}}_T^i\bigg)\bigg\}\bigg| \Fc^n_{\cdot}\bigg] \, \text{satisfies the reverse H\"{o}lder inequality } R_{p},
\end{align}}
then the FBSDEs
{\small
\begin{equation}
\label{eq:intro.fbsde}
\left\{
\begin{aligned}
\d \hat{X}^{i}_t \! &= \! \left((\opt^i_t)^{\top} \Sigma_t^i \theta^i_t-\frac{1}{2}|(\Sigma_t^i)^\top\opt^i_t|^{2}\right)\d t +   (\opt^i_t)^{\top}\Sigma_t^i \d
\begin{pmatrix}  W^i_t  \\  W^*_t  \end{pmatrix}
, \quad \hat{X}^{i}_0 = \log(x^i),\quad i=1,\cdots,n,\\
-\d Y_t^i \! &= \!\bigg(\! \frac{1}{2}|Z^{*i}_{t}|^{2} \! + \! \frac{1}{2}\! \sumall|Z_t^{ij}|^2 \! + \! \frac{\gamma^{i}}{2(1-\gamma^{i})}\bigg|\! \begin{pmatrix} Z_t^{ii} \\ Z_t^{*i} \end{pmatrix}\!\!+\! \theta^{i}_{t}\bigg|^{2} \!\! - \! \frac{\gamma^{i}(1-\gamma^{i})}{2}\Big|(I \! - \! P_t^i)\left(\!\frac{1}{1-\gamma^{i}}\left(\! \begin{pmatrix} Z_t^{ii} \\ Z_t^{*i} \end{pmatrix}\! \!+\! \theta_t^i  \right)\!\right)\Big|^2  \bigg) \d t \\
&\ \ \ \ - \sumall Z_t^{ij} \cdot \d W_t^j - Z_t^{*i} \d W_t^*,\quad Y_T^i = -\rho\gamma^{i}\overline{\hat{X}}_T^i,\quad i=1,\cdots,n,
\end{aligned}
\right.
\end{equation}}
admit a solution with $\left((Z^{i j})_{1\leq j\leq n},Z^{*i}\right)\in \left( \H_{\mathrm{BMO}}(\R^{d},\F^n)\right)^n \times\H_{\mathrm{BMO}}(\R,\F^n)$ for $ i=1,\cdots,n$.

(2) If the FBSDEs \eqref{eq:intro.fbsde} admits a solution with  
\begin{align*}
\left((Z^{i j})_{1\leq j\leq n},Z^{*i}\right)\in \left( \H_{\mathrm{BMO}}(\R^{d},\F^n)\right)^n \times\H_{\mathrm{BMO}}(\R,\F^n), \quad i=1,\cdots,n.
\end{align*}
Then the $n$-agent game admits a Nash equilibrium $(\opt^i)_{i\in \{1,\cdots,n\}}$ such that \eqref{reverse:n} holds  and it holds  that  for $ i=1,\cdots,n$,
\begin{align}\label{eqn:n-opt}
	V_0^i = \frac{1}{\gamma^{i}}(x^{i})^{\gamma^i}\exp\left( Y_0^i \right).
\end{align}

In both cases, the relationship is given by
\begin{align}\label{eqn:n-optimal}
	\opt_t^i &= \left(\Sigma_t^i {\Sigma_t^i}^{\top} \right)^{-1}\Sigma_t^i P_t^i\left(\frac{1}{1-\gamma^{i}}\left( \begin{pmatrix} Z_t^{ii} \\ Z_t^{*i} \end{pmatrix} +  \theta_t^i  \right)\right),\ \d t\otimes \P\text{--a.s.} . 
\end{align}  
\end{theorem}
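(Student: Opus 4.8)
The plan is to follow the classical martingale optimality principle of \cite{a:hu2005utility}, adapted to the multi-agent and constrained setting. Fix an agent $i$ and treat the strategies $(\pi^j)_{j\neq i}$ of the other agents as given; since $\overline{\hat X}^i_T$ then depends only on quantities outside agent $i$'s control, the objective \eqref{eqn:n-agent obj} is a standard constrained power-utility maximization for agent $i$ with a random (but exogenous) multiplicative benchmark. Passing to logarithms as in \eqref{eqn:utility-n-agent-log}, I would construct, for each candidate equilibrium strategy, a family of processes $R^{i,\pi^i}_t := \frac{1}{\gamma^i}\exp\{\gamma^i(\hat X_t^{i,\pi^i} - \rho\,\E[\overline{\hat X}^i_T\mid \Fc^n_t])\}\,\exp(Y^i_t)$ (up to the obvious normalization), and require: (a) $R^{i,\pi^i}_T$ is independent of $\pi^i$ and equals the terminal utility, (b) $R^{i,\pi^i}$ is a supermartingale for every admissible $\pi^i$, and (c) $R^{i,\opt^i}$ is a true martingale for the equilibrium choice. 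Writing $\d\hat X^i_t$ from the forward equation, applying It\^o to the product, and collecting the $\d t$-terms, condition (b)--(c) forces the generator of $Y^i$ to be the pointwise infimum over $\pi^i$ (equivalently over $p\in A_i$) of a quadratic expression in $\pi^i$; completing the square and projecting onto the closed convex set $(\Sigma^i_t)^\top A_i$ yields exactly the minimizer \eqref{eqn:n-optimal} and the driver appearing in \eqref{eq:intro.fbsde}, with the extra $-\frac{\gamma^i(1-\gamma^i)}{2}|(I-P^i_t)(\cdots)|^2$ term encoding the distance to the constraint set. The terminal condition $Y^i_T = -\rho\gamma^i\overline{\hat X}^i_T$ comes from matching $R^{i,\pi^i}_T$ with the terminal utility.

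For direction (1) (equilibrium $\Rightarrow$ FBSDE), I would \emph{define} $Y^i_t := \frac{1}{\gamma^i}\log\big(\gamma^i\,\E[\text{(terminal utility)}\mid\Fc^n_t]\big) - \gamma^i\hat X^{i,\opt^i}_t + \rho\gamma^i\,\E[\overline{\hat X}^i_T\mid\Fc^n_t]$ — i.e., read off $Y^i$ from the dynamic value process of agent $i$'s problem — and let $(Z^{ij},Z^{*i})$ be its martingale representation integrands with respect to $(W^j,W^*)$. The reverse-H\"older assumption $R_p$ in \eqref{reverse:n} is precisely what guarantees, via the John--Nirenberg / Kazamaki-type results used in \cite{a:frei2011financial}, that the stochastic logarithm of this positive martingale is a BMO martingale, so that $(Z^{ij})_j, Z^{*i}\in\H_{\mathrm{BMO}}$; this is where I expect the main technical obstacle to lie, since one must also verify that the integrability is uniform enough to legitimize the It\^o expansion and the conditional-expectation manipulations (here the boundedness of $\theta^i$, $\Sigma^i$ from \Cref{assm:n-agent} and \Cref{cd:P-n} keep all the quadratic terms in the driver BMO-integrable). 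Then the martingale optimality characterization of the optimizer (standard verification: the quadratic in $p\in A_i$ is minimized at the projection) gives that $\opt^i$ is of the form \eqref{eqn:n-optimal}, and substituting this optimal $p$ back turns the HJB-type driver into the generator displayed in \eqref{eq:intro.fbsde}; doing this simultaneously for all $i$ produces the coupled FBSDE system.

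For direction (2) (FBSDE $\Rightarrow$ equilibrium), I would run the argument in reverse: given a solution with all $Z$-components in $\H_{\mathrm{BMO}}$, define $\opt^i$ by \eqref{eqn:n-optimal}, check $\opt^i\in\Ac_i=\H_{\mathrm{BMO}}(A_i,\F^n)$ using \Cref{cd:P-n} and the BMO bound on $Z^{ii},Z^{*i}$ plus boundedness of $\theta^i$, and then for each $i$ and \emph{any} competing admissible $\pi^i$ form the process $R^{i,\pi^i}_t := \frac{1}{\gamma^i}(X^{i,\pi^i}_t)^{\gamma^i}\exp(-\rho\gamma^i\,\E[\overline{\hat X}^i_T\mid\Fc^n_t]\text{-ish part} + Y^i_t)$ and verify by It\^o that it is a supermartingale in general and a martingale at $\pi^i=\opt^i$ (the completed-square inequality makes the drift $\le 0$, with equality at the projection). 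The BMO property of $Z$ and the resulting reverse-H\"older $R_{p'}$ estimate for the relevant density (Kazamaki) ensure $R^{i,\pi^i}$ is of class (D) / genuinely integrable, so the super/true-martingale property transfers to the expectations at time $T$, giving $\E[U_i(\cdot)]\le R^{i,\pi^i}_0 = \frac{1}{\gamma^i}(x^i)^{\gamma^i}\exp(Y^i_0)$ with equality for $\opt^i$; this simultaneously proves \eqref{reverse:n}, the equilibrium property, and the value formula \eqref{eqn:n-opt}. A sign subtlety to handle with care throughout is that $\gamma^i$ may be negative, which flips the direction of some inequalities (super- vs sub-martingale) and must be tracked when completing the square and when concluding from the martingale property — this, together with the BMO/reverse-H\"older bookkeeping, is the part I would write out most carefully.
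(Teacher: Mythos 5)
Your proposal follows essentially the same route as the paper: the martingale optimality principle of \cite{a:hu2005utility} adapted to the constrained multi-agent setting, with the BMO property of the representation integrands extracted from the reverse H\"older condition via Kazamaki's results for direction (1), and a verification argument via the completed square, the projection onto $(\Sigma^i_t)^\top A_i$, and uniform integrability of the stochastic exponential (plus the $\gamma^i$-sign bookkeeping) for direction (2). The only deviations are cosmetic normalizations --- the paper keeps the benchmark in the terminal condition $Y^i_T=-\rho\gamma^i\overline{\hat{X}}_T^i$ rather than folding $\E[\overline{\hat{X}}_T^i\mid\Fc^n_t]$ into $R^{i,\pi^i}$, and obtains $Z^{ii},Z^{*i}$ as shifts of the martingale-representation integrands by $\gamma^i(\Sigma^i_t)^\top\opt^i_t$ --- so your argument is correct in substance.
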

\begin{proof}
    The proof is presented in \Cref{subsec:characterization}.
\end{proof}

\subsection{FBSDE characterization of the graphon game}
\label{sec:charac.graph}
Analogous to the $n$-agent game discussed above, we now derive FBSDE characterizations for the graphon game.
In this case, the characterization is obtained with respect to a system of (infinitely many) McKean-Vlasov FBSDEs.
We refer to these equations as graphon FBSDEs to emphasize that the interdependence among the equations is mediated through the graphon $G$.
\begin{proposition}\label{prop:graphon-FBSDE}
(1)	If the graphon game described in \eqref{eqn:graphon-obj} admits a graphon Nash equilibrium $(\widetilde{\pi}^u)_{u\in I}$ such that for $\mu$--almost every $u\in I$, there exists $p>1$ with 
{\small\begin{align}
\label{reverse:g}
\E \bigg[ \frac{1}{\gamma^u}\exp\bigg\{\!\gamma^u\bigg(\!\hat{X}_T^{u,\widetilde{\pi}^u}  -  \rho\E\Big[ \int_I \hat{X}_T^{v, {\widetilde\pi}^v} G(u,v) \d v \big|\Fc_T^* \Big]\!\bigg)\!\bigg\}\bigg|\Fc^u_{\cdot}\bigg] 
\text{satisfies  the  reverse H\"{o}lder inequality } R_{p},       
\end{align}}
then the following graphon FBSDE
{\small
\begin{equation}
\label{eqn:graphon-FBSDE}
\left\{
\begin{aligned}
 \d  \hat{X}^{u}_t \!& = \! \left((\opt^u_t)^{\top} \Sigma_t^u \theta^u_t-\frac{1}{2}|(\Sigma_t^u)^{\top}\opt^u_t|^{2}\right)\d t +   (\opt^u_t)^{\top}\Sigma_t^u \d
\begin{pmatrix}  W^u_t  \\  W^*_t  \end{pmatrix}
, \quad \hat{X}^{u}_0 = \log(x^u),\\
-\d Y_t^u \! &= \!\bigg(\! \frac{1}{2}|Z^{*u}_{t}|^{2} \! + \! \frac{1}{2}\! |Z_t^u|^2 \! + \! \frac{\gamma^{u}}{2(1-\gamma^{u})}\bigg|\! \begin{pmatrix} Z_t^u \\ Z_t^{*u} \end{pmatrix}\!\!+\! \theta^{u}_{t}\bigg|^{2} \!\! - \! \frac{\gamma^{u}(1-\gamma^{u})}{2}\bigg|(I \! - \! P_t^u)\left(\!\frac{1}{1-\gamma^{u}}\left(\! \begin{pmatrix} Z_t^u \\ Z_t^{*u} \end{pmatrix}\! \!+\! \theta_t^u  \right)\!\right)\bigg|^2  \bigg) \d t \\
& \ \ \  - Z_t^u \cdot \d W_t^u - Z_t^{*u} \d W_t^*,\quad 
Y_T^u = \ -\rho \gamma^u  \E \Big[ \int_I  \hat{X}_T^{v}G(u,v) \d v \Big| \Fc_T^* \Big]
\end{aligned}
\right.
\end{equation}}
admits a solution with $\left(Z^{u},Z^{*u}\right)\in\H_{\mathrm{BMO}}(\R^{ d},\F^u)\times\H_{\mathrm{BMO}}(\R,\F^u)$ for $\mu$--almost every $u\in I$.
	
(2) If the graphon FBSDE \eqref{eqn:graphon-FBSDE} admits a solution 
such that $\left(t,u,\omega\right)\rightarrow\left(\hat{X}^{u}_{t}, Y^{u}_{t}, Z_t^{u}, Z_t^{*u}\right)$ is $\Bc([0,T])\otimes \Ic \boxtimes \Fc$--measurable and 
\begin{align*}
\left(Z^{u}, Z^{*u}\right)\in\H_{\mathrm{BMO}}(\R^{ d},\F^u)\times\H_{\mathrm{BMO}}(\R,\F^u) \ \text{for $\mu$--almost every} \ u\in I.
\end{align*}
Then the graphon game admits a graphon Nash equilibrium $(\widetilde{\pi}^u)_{u\in I}$ such that \eqref{reverse:g} holds and, for $\mu$--almost every $u\in I$, we have
\begin{align}\label{eqn:graphon-opt}
V_0^{u,G} = \frac{1}{\gamma^u}(x^{u})^{\gamma^{u}}\exp\left( Y_0^u \right).
\end{align}

In both cases, the relationship is given by
\begin{align}\label{eqn:graphon-optimal}
\opt_t^{u} = \left(\Sigma_t^u {\Sigma_t^u}^{\top} \right)^{-1}\Sigma_t^u P_t^u\left(\frac{1}{1-\gamma^u}\left( \begin{pmatrix} Z_t^{u} \\ Z_t^{*u} \end{pmatrix} +  \theta_t^u  \right)\right)\ \d t\otimes\mu\boxtimes\P\text{--a.s.}.
\end{align}
\end{proposition}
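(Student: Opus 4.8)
The plan is to prove \Cref{prop:graphon-FBSDE} by adapting the martingale optimality principle used in \cite{a:hu2005utility} and \cite{a:fu2023mean} to the graphon setting, treating $u$ as a fixed (typical) agent and the benchmark term $\rho\,\E[\int_I \hat X_T^v G(u,v)\,\d v\mid\Fc_T^*]$ as a given $\Fc_T^*$-measurable random endowment. First I would fix $u\in I$ and, for an admissible strategy profile $(\pi^v)_{v\in I}$, construct the family of processes
\begin{align*}
R_t^{u,\pi^u} := \frac{1}{\gamma^u}\exp\Big(\gamma^u\big(\hat X_t^{u,\pi^u} - \rho\,\E[\textstyle\int_I \hat X_T^v G(u,v)\,\d v\mid\Fc_t^*]\big) + Y_t^u\Big),
\end{align*}
where $(Y^u,Z^u,Z^{*u})$ solves the BSDE appearing in \eqref{eqn:graphon-FBSDE}. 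The generator of the $Y^u$-BSDE is built precisely so that: (i) $R^{u,\pi^u}$ is a supermartingale for every admissible $\pi^u$, and (ii) it is a true martingale for the candidate $\opt^u$ given in \eqref{eqn:graphon-optimal}. This is verified by applying It\^o's formula to $R^{u,\pi^u}$, collecting the $\d t$ terms, and observing that the resulting drift is $R_t^{u,\pi^u}\gamma^u$ times a quadratic-in-$(\Sigma_t^u)^\top\pi^u_t$ expression whose minimizer over the constraint set $A_u$ is exactly the projection $P_t^u$ appearing in \eqref{eqn:graphon-optimal}; completing the square reproduces the generator, with the extra $-\frac{\gamma^u(1-\gamma^u)}{2}|(I-P_t^u)(\cdots)|^2$ term accounting for the constraint. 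Since $R_0^{u,\pi^u}$ does not depend on $\pi^u$ and $R_T^{u,\pi^u}$ equals the utility integrand, the supermartingale/martingale dichotomy yields $V_0^{u,G}((\pi^v)_{v\neq u}) = \E[R_T^{u,\opt^u}] = R_0^{u,\opt^u} = \frac1{\gamma^u}(x^u)^{\gamma^u}\exp(Y_0^u)$, giving both optimality of $\opt^u$ and formula \eqref{eqn:graphon-opt}.

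For direction (1), given a graphon Nash equilibrium $(\opt^u)_{u\in I}$ satisfying the reverse H\"older condition \eqref{reverse:g}, I would define, for $\mu$-a.e. $u$, the process $M_t^u := \E[\frac1{\gamma^u}\exp\{\gamma^u(\hat X_T^{u,\opt^u} - \rho\,\E[\int_I \hat X_T^v G(u,v)\,\d v\mid\Fc_T^*])\}\mid\Fc_t^u]$. The martingale representation theorem (with respect to the Brownian filtration $\F^u$ generated by $(W^u,W^*)$) gives integrands, and one sets $Y_t^u$ to be (a logarithmic transform of) $M_t^u$ divided by the forward part, i.e. $Y_t^u := \log(\gamma^u M_t^u) - \gamma^u\hat X_t^{u,\opt^u} + \gamma^u\rho\,\E[\int_I \hat X_T^v G(u,v)\,\d v\mid\Fc_t^*]$, then reads off $(Z^u,Z^{*u})$ from It\^o's formula. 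The reverse H\"older $R_p$ condition is exactly what guarantees $M^u$ is bounded away from $0$ and above (so the logarithm is well-defined and bounded) and that $(Z^u,Z^{*u})\in\H_{\mathrm{BMO}}$; this is the standard argument from \cite{a:hu2005utility} and \cite{a:frei2011financial}, and here it is applied agent-by-agent. One then checks that $(Y^u,Z^u,Z^{*u})$ solves \eqref{eqn:graphon-FBSDE} by using that $\opt^u$ is a best response, which forces the drift of the log-transformed process to equal the prescribed generator (the best-response condition pins down the $P_t^u$-projection structure). The $\Bc([0,T])\otimes\Ic\boxtimes\Fc$-joint measurability in $(t,u,\omega)$ follows because all constructions (conditional expectations, martingale representation) can be performed measurably in the parameter $u$ using the Fubini extension structure from \Cref{rmk: fubini property}.

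The main obstacle I anticipate is the careful handling of the $\Fc_T^*$-conditional-expectation benchmark term throughout, both the well-posedness and measurability bookkeeping. Unlike the classical single-agent case where the terminal condition is a fixed random variable, here $Y_T^u = -\rho\gamma^u\,\E[\int_I \hat X_T^v G(u,v)\,\d v\mid\Fc_T^*]$ couples all the forward equations through $G$ and through a conditional expectation on the common-noise filtration; one must verify that this term is well-defined ($\int_I \hat X_T^v G(u,v)\,\d v$ must make sense $\P$-a.s., which uses boundedness of $G$ and $\L^2$-type bounds on the BMO controls $Z^v$ uniform in $v$, together with the Fubini property) and that the conditioning $\mathbb{E}[\cdot\mid\Fc_t^*]$ interacts correctly with It\^o's formula (it does, since $W^*$ drives the $\Fc_t^*$-martingale part and $\hat X_t^{v,\pi^v}$ for $v\neq u$ carries no $W^u$-dependence). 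A secondary technical point is confirming that the quadratic generator in \eqref{eqn:graphon-FBSDE} genuinely has the claimed form after completing the square with the projection $P_t^u$ onto $(\Sigma_t^u)^\top A_u$; this is a finite-dimensional constrained-optimization identity (the same as in \cite[Theorem~2.2]{a:hu2005utility} extended to the power-utility change of variables) and should be routine once set up, but it is where the precise coefficients $\frac{\gamma^u}{2(1-\gamma^u)}$ and $\frac{\gamma^u(1-\gamma^u)}{2}$ must be tracked. The remaining steps (supermartingale property for generic $\pi^u$ via the completed-square inequality, martingale property for $\opt^u$, and identification of $V_0^{u,G}$) are then standard.
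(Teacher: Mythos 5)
Your overall strategy---the martingale optimality principle, the dynamic-programming family $M^u$, martingale representation together with the reverse H\"older condition \eqref{reverse:g} and Kazamaki's theorem for the BMO property, and completing the square with the projection $P^u_t$---is the same route the paper takes (it proves the $n$-agent case, Theorem \ref{thm:n-FBSDE}, in detail and treats the graphon case as analogous). However, there is a concrete error in where you place the benchmark term, and it breaks both directions as written. In direction (2) you define $R^{u,\pi^u}_t=\frac{1}{\gamma^u}\exp\big(\gamma^u(\hat X^{u,\pi^u}_t-\rho\,\E[\int_I\hat X^v_T G(u,v)\,\d v\,|\,\Fc^*_t])+Y^u_t\big)$ with $Y^u$ the solution of \eqref{eqn:graphon-FBSDE}. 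Since $Y^u_T=-\rho\gamma^u\E[\int_I\hat X^v_T G(u,v)\,\d v\,|\,\Fc^*_T]$, your $R^{u,\pi^u}_T$ carries the benchmark twice and is therefore not the utility integrand, so the supermartingale/martingale dichotomy no longer identifies $V^{u,G}_0$; moreover, $t\mapsto\E[\int_I\hat X^v_T G(u,v)\,\d v\,|\,\Fc^*_t]$ is an $\F^*$-martingale whose representation integrand (an unknown $\nu^u\,\d W^*$ term) enters the It\^o expansion of your $R$, while the generator in \eqref{eqn:graphon-FBSDE} contains no such term, so the drift does not reduce to the claimed completed-square expression. The correct construction, exactly as in the paper's $n$-agent proof, is $R^{\pi^u}_t=\frac{1}{\gamma^u}\exp\big(Y^u_t+\gamma^u\hat X^{u,\pi^u}_t\big)$: the benchmark enters only through the terminal condition, $R^{\pi^u}_T$ is exactly the utility, and $R^{\pi^u}_0=\frac{1}{\gamma^u}(x^u)^{\gamma^u}\e^{Y^u_0}$ yields \eqref{eqn:graphon-opt}.

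The same misplacement occurs in direction (1): with your definition $Y^u_t:=\log(\gamma^u M^u_t)-\gamma^u\hat X^{u,\opt^u}_t+\gamma^u\rho\,\E[\int_I\hat X^v_T G(u,v)\,\d v\,|\,\Fc^*_t]$ one gets $Y^u_T=0$, not the terminal condition of \eqref{eqn:graphon-FBSDE}, and the extra conditional-expectation term again injects the unknown $\F^*$-martingale integrand into the dynamics of $Y^u$; what you would construct is (a version of) the transformed BSDE of Lemma \ref{lemma:correspondence}, with zero terminal value and a modified generator, rather than a solution of the stated graphon FBSDE. Dropping that extra term, i.e.\ setting $Y^u_t=\log\big(\gamma^u M^u_t\exp(-\gamma^u\hat X^{u,\opt^u}_t)\big)$, restores the terminal condition $Y^u_T=-\rho\gamma^u\E[\int_I\hat X^v_T G(u,v)\,\d v\,|\,\Fc^*_T]$ and lets the rest of your argument (representation of $M^u$, BMO via \eqref{reverse:g} and Kazamaki, the best-response condition pinning down the projection, and the Fubini-extension measurability in $u$) go through exactly as in the paper's proof of Theorem \ref{thm:n-FBSDE}.
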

\begin{proof}
    The proof is analogous to that of \Cref{thm:n-FBSDE} and is therefore omitted.
\end{proof}

\section{Proofs}
\label{sec:proofs}

\subsection{Characterization result: Proofs of \texorpdfstring{\Cref{thm:n-FBSDE}}{Theorem~\ref*{thm:n-FBSDE}}}
\label{subsec:characterization}
We now present the proof of the characterization result for the $n$-agent game.

\begin{proof}[Proof of (1)]
Let $(\opt^i)_{i\in \{1,\cdots,n\}}$ be a Nash equilibrium such that \eqref{reverse:n} holds.
For any $\pi^i \in \Ac_i$, define 
\begin{align*}
M^{i,\pi^i}(t)
=\exp(\gamma^{i}\hat{X}_t^{i,\pi^i}) \esssup_{\kappa \in \Ac_i}\E\bigg[ \frac{1}{\gamma^{i}}\exp\bigg\{\gamma^{i}\bigg(\hat{X}_T^{i,\kappa} - \hat{X}_t^{i,\kappa}-\rho\overline{\hat{X}}_T^i\bigg)\bigg\} \bigg| \Fc^n_t \bigg].
\end{align*}
Following the arguments presented in 
\cite[Theorem 4.7]{a:espinosa2015optimal}, and \cite[Theorem 3.2]{a:frei2011financial},  using dynamic programming, we establish that $M^{i,\pi^i}$ admits a continuous version, which is a supermartingale for all $\pi^i$ and a martingale for $\opt^i$. The origin of this proof can be traced back to \cite{a:hu2005utility}.

By a variant of the martingale representation theorem, we express $M^{i,\opt^i}$ as:
\begin{align*}
    M^{i,\opt^i}(t)=M^{i,\opt^i}(0) \Ec_t \left(\int _0^{\cdot}\sumall \tilde{Z}^{ij}\cdot  \d W^j+\tilde{Z}^{*i}\d W^*\right) ,
\end{align*} 
where
\begin{align}
\label{BMO cdn}
\tilde{Z}^{ij}\in \mathbb{H}_{\mathrm{BMO}}\left(\mathbb{R}^{d},  \mathbb{F}^n\right), \quad \tilde{Z}^{*i}
\in \mathbb{H}_{\mathrm{BMO}}\left(\mathbb{R}, \mathbb{F}^n\right), \quad j=1,\cdots,  n.
\end{align}
The BMO property \eqref{BMO cdn} is obtained from \cite[Theorem 3.4]{a:kazamaki2006continuous} and \eqref{reverse:n}.

Straightforward calculation yields
{\small\begin{align*}
    \begin{split}
        M^{i,\pi^i}(t)&= \exp\bigg\{\gamma^{i}\bigg(\hat{X}_t^{i,\pi^i} - \hat{X}_t^{i,\opt^i}\bigg)\bigg\} M^{i,\opt^i}(t) \\
        &=M^{i,\opt^i}(0) \Ec_t \left(\int _0^{\cdot}\sumall \tilde{Z}^{ij}\cdot \d W^j+\tilde{Z}^{*i}\d W^*\right) \exp\bigg\{\gamma^{i}\bigg(\hat{X}_t^{i,\pi^i} - \hat{X}_t^{i,\opt^i}\bigg)\bigg\}\\
        &=M^{i,\opt^i}(0) \Ec_t \left(\!\int _0^{\cdot}\sum_{ j\neq i} \tilde{Z}^{ij}\cdot \d W^j\!+\!\left(\!\begin{pmatrix} \tilde{Z}^{ii} \\ \tilde{Z}^{*i} \end{pmatrix}^{\top}\!\!+\!\gamma^{i}(\pi^{i})^{\top}\Sigma^i\!-\!\gamma^{i}(\opt^i)^{\top}\Sigma^i\right)
        \d\begin{pmatrix}  W^i  \\  W^*  \end{pmatrix}\!\right) 
        \exp\left(\int_0^t \widetilde{f}_s d s\right),
    \end{split}   
\end{align*} }
where
{\small\begin{align*}
    \begin{split}
\widetilde{f}_t=&\gamma^{i}(\pi^{i}_t)^{\top}\Sigma_t^i \theta^i_t-\frac{1}{2}\gamma^{i}|(\Sigma_t^i)^{\top}\pi^{i}_t|^{2}+\frac{1}{2}\gamma^{i}|(\Sigma_t^i)^{\top}\opt^i_t|^{2}-\gamma^{i}(\opt^i_t)^{\top}\Sigma_t^i \theta^i_t
+\frac{1}{2}(\gamma^{i})^2|(\Sigma_t^i)^{\top}\pi^{i}_t|^{2}+\frac{1}{2}(\gamma^{i})^2|(\Sigma_t^i)^{\top}\opt^i_t|^{2}\\
&-(\gamma^{i})^2(\opt^i_t)^{\top}\Sigma_t^i(\Sigma_t^i)^{\top}\pi^{i}_t
+\gamma^{i}(\pi^{i}_t)^{\top}\Sigma_t^i\begin{pmatrix} \tilde{Z}_t^{ii} \\ \tilde{Z}_t^{*i} \end{pmatrix}-\gamma^{i}(\opt^i_t)^{\top}\Sigma_t^i\begin{pmatrix} \tilde{Z}_t^{ii} \\ \tilde{Z}_t^{*i} \end{pmatrix}\\
=&\frac{\gamma^i(1 \!- \! \gamma^i)}{2}\!
\left[\left|(\Sigma_t^i)^{\top}\! \opt^i_t-\! \frac{1}{1\!-\!\gamma^i}\left(\!\!\begin{pmatrix} \tilde{Z}_t^{ii} \\ \tilde{Z}_t^{*i} \end{pmatrix}\!\!+\!\theta^i_t\!-\!\gamma^i(\Sigma_t^i)^{\top}\opt_t^i \right) \right|^2\!\!
\!-\!\left|\left(\Sigma_t^i\right)^{\top}\!\pi^{i}_t\!-\!\frac{1}{1\!-\!\gamma^i}\left(\!\!\begin{pmatrix} \tilde{Z}_t^{ii} \\ \tilde{Z}_t^{*i} \end{pmatrix}\!\!+\!\theta^i_t\!-\!\gamma^i(\Sigma_t^i)^{\top}\opt_t^i\right) \right|^2\!\right].
    \end{split}   
\end{align*}}

Define $Z^{ij}_t=\tilde{Z}^{ij}_t$, $1\leq j \leq n$, $j\neq i$ , $Z^{ii}_t=\tilde{Z}^{ii}_t-\gamma^i(\sigma^i_t)^{\top}\opt_t^i$ and $Z^{*i}_t=\tilde{Z}^{*i}_t-\gamma^i(\sigma^{*i}_t)^{\top}\opt_t^i$. Then we have
\begin{align*}
    Z^{ij}\in \mathbb{H}_{\mathrm{BMO}}\left(\mathbb{R}^{d},  \mathbb{F}^n\right), \quad Z^{*i}
    \in \mathbb{H}_{\mathrm{BMO}}\left(\mathbb{R}, \mathbb{F}^n\right), \quad j=1,\cdots,  n,
\end{align*}
as $\opt^i$ and $\pi^{i}$ belong to $\Ac_i$.
Consequently, $\tilde{f}_t$ can be rewritten as
\begin{align*}
    \widetilde{f}_t=\frac{\gamma^i(1-\gamma^i)}{2}
    \left[\left|\left(\Sigma_t^i\right)^{\top}\opt^i_t-\frac{1}{1-\gamma^i}\left(\begin{pmatrix} Z_t^{ii} \\ Z_t^{*i} \end{pmatrix}+\theta^i_t \right) \right|^2-\left|\left(\Sigma_t^i\right)^{\top}\pi^{i}_t-\frac{1}{1-\gamma^i}\left(\begin{pmatrix} Z_t^{ii} \\ Z_t^{*i} \end{pmatrix}+\theta^i_t \right) \right|^2\right].
\end{align*} 

Because $M^{i,\pi^i}$ is a supermartingale, $\exp \left(\int_0^{\cdot} \widetilde{f_s} d s\right)$ is non-increasing if $\gamma^i>0$ and non-decreasing if $\gamma^i<0$. 
As such, $\frac{\widetilde{f}}{\gamma^i}$ is non-positive. Thus we have
\begin{align*}
    \left(\Sigma_t^i\right)^{\top}\opt^i_t=
    P_t^i\left(\frac{1}{1-\gamma^{i}}\left( \begin{pmatrix} Z_t^{ii} \\ Z_t^{*i} \end{pmatrix} +  \theta_t^i  \right)\right), \d t\otimes \P\text{--a.s.} ,
\end{align*} 
which implies
\begin{align*}
    \opt^i_t=
    \left(\Sigma_t^i {\Sigma_t^i}^{\top} \right)^{-1}\Sigma_t^i P_t^i\left(\frac{1}{1-\gamma^{i}}\left( \begin{pmatrix} Z_t^{ii} \\ Z_t^{*i} \end{pmatrix} +  \theta_t^i  \right)\right), \d t\otimes \P\text{--a.s.} . 
\end{align*} 
Furthermore, define $Y^i=\log \left(\gamma^{i} M^{i,\opt^i} \exp \left(-\gamma^i \hat{X}^{i,\opt^i}\right)\right)$. Then $\left(\hat{X}^{i,\opt^i}, Y^i , Z^{i1},\cdots , Z^{in},Z^{*i}\right)$ satisfies \eqref{eq:intro.fbsde}.
\end{proof}

\begin{proof}[Proof of (2)]
Fix an $i\in \{1,2,\cdots n\}$.
For each strategy $\pi^i \in \Ac_i$, define 
\begin{align*}
    R_t^{\pi^i}=\frac{1}{\gamma^{i}}\exp(Y^i_t+\gamma^i \hat{X}^{i,\pi^i}_t),
\end{align*}
where $Y^i_t$ is the solution of the FBSDE \eqref{eq:intro.fbsde}. We claim that $R^{\pi^i}$ is a supermartingale for all $\pi^i$ and a martingale for $\pi^i=\opt^i$ defined by \eqref{eqn:n-optimal}.
        
Indeed, using It\^o's formula, we have
\begin{align*}
    \begin{split}
        R_t^{\pi^i}\!= \frac{1}{\gamma^{i}} (x^i)^{\gamma^i} \exp(Y_0^i)\Ec_t \!\left(\!\int _0^{\cdot}\sum_{ j\neq i}  Z^{ij}\cdot \d W^j
        \!+\!\left(\!\!\begin{pmatrix} Z^{ii} \\ Z^{*i} \end{pmatrix}^{\top}\!\!+\!\gamma^{i}(\pi^{i})^{\top}\Sigma^i\right)
        \d\!\begin{pmatrix}  W^i  \\  W^*  \end{pmatrix}\right) 
        \exp\left(\!\int_0^t \widetilde{f}_s(\pi^i) \d s\!\right),
    \end{split}
\end{align*}
where 
\begin{align*}
    \begin{split}
\widetilde{f}_t(\pi^i)=\frac{\gamma^i(1-\gamma^i)}{2}\left[\left|\left(\Sigma_t^i\right)^{\top}\opt^i_t-\frac{1}{1-\gamma^i}\left(\begin{pmatrix} Z_t^{ii} \\ Z_t^{*i} \end{pmatrix}+\theta^i_t \right) \right|^2
-\left|\left(\Sigma_t^i\right)^{\top}\pi^{i}_t-\frac{1}{1-\gamma^i}\left(\begin{pmatrix} Z_t^{ii} \\ Z_t^{*i} \end{pmatrix}+\theta^i_t \right) \right|^2\right].
    \end{split}   
\end{align*} 
The stochastic exponential 
\begin{align*}
\Ec_t \left(\int _0^{\cdot}\sum_{ j\neq i}  Z^{ij}\cdot \d W^j
+\left(\begin{pmatrix} Z^{ii} \\ Z^{*i} \end{pmatrix}^{\top}+\gamma^{i}(\pi^{i})^{\top}\Sigma^i\right)
\d\begin{pmatrix}  W^i  \\  W^*  \end{pmatrix}\right) 
\end{align*}
is a uniformly integrable martingale since $\pi^i \in H_{\mathrm{BMO}}$, as established by \cite[Theorem 2.3]{a:kazamaki2006continuous}. 
Notice that $\frac{\widetilde{f}}{\gamma^i}$ is non-positive for all $\pi^i \in \Ac_i$  and zero for $\opt^i$,  $R^{\pi^i}$ is a supermartingale for all $\pi^i$ and a martingale for $\pi^i=\opt^i$.
Consequently, we have $\E[R^{\opt^i}_T]=\E[R^{\opt^i}_0]=\E[R^{\pi^i}_0]\geq \E[R^{\pi^i}_T]$, which implies that 
\begin{align*}
    V^i_0((\widetilde\pi^j)_{j\neq i}) = \E \bigg[ \frac{1}{\gamma^{i}}\exp\bigg\{\gamma^{i}\bigg(\hat{X}_T^{i,\widetilde\pi^i} - \rho\overline{\hat{X}}_T^i\bigg)\bigg\} \bigg]=\frac{1}{\gamma^{i}} (x^{i})^{\gamma^i}\exp(Y_0^i).
\end{align*}
Thus \eqref{eqn:n-opt} holds and  $(\opt^i)_{i\in \{1,\cdots,n\}}$ is a Nash equilibrium of the  $n$-agent game.
Notice that
\begin{align*}
    \E \bigg[ \frac{1}{\gamma^{i}}\exp\bigg\{\gamma^{i}\bigg(\hat{X}_T^{i,\widetilde\pi^i} - \rho\overline{\hat{X}}_T^i\bigg)\bigg\}\bigg| \Fc^n_{\cdot}\bigg] =R_\cdot^{\opt^i}
\end{align*}
is a uniformly integrable martingale,  
\eqref{reverse:n} follows from  \cite[Theorem 3.4]{a:kazamaki2006continuous}.       
\end{proof}

\subsection{Wellposedness of graphon McKean--Vlasov FBSDEs: Proof of \texorpdfstring{\Cref{thm:graphon-game.existence}}{Theorem~\ref*{thm:graphon-game.existence}}}
\label{subsec:graphon}

In the ensuing statements and proofs, we will use the space $\S^p(\R^d,\F,I)$ defined as the space of families of processes $(Y^u)_{u\in I}$ such that $(t,u,\omega)\mapsto Y^u_t(\omega)$ is $\Bc([0,T])\otimes\Ic\boxtimes\F$--measurable and for $\mu$--almost every $u$, it holds $Y^u\in \S^p(\R^d,\F^u)$.
This space is equipped with the norm 
\begin{align*}
\|Y\|_{\S^p(\R^d,\F,I)}:= \sup_{u\in I}\|Y^u\|_{\S^p(\R^d,\F^u)}  
\end{align*}
which makes it a Banach space.
The space $\mathbb{H}_{\mathrm{BMO}}(\mathbb{R}^{d},\F, I)$ is defined analogously to $\S^p(\R,\F,I)$ with the norm
\begin{align*}
\|Z\|_{\H_{\mathrm{BMO}}(\R^d,\F,I)}:= \sup_{u\in I}\|Z^u\|_{\H_{\mathrm{BMO}}(\R^d,\F^u)}.
\end{align*}

In the following proof, we assume that Condition \ref{cd:P} in Theorem \ref{thm:graphon-game.existence} holds. By Proposition \ref{prop:graphon-FBSDE}, we have transformed our problem into proving the existence of solutions for the graphon McKean-Vlasov FBSDEs \eqref{eqn:graphon-FBSDE}. To proceed, we need to convert these FBSDEs into equivalent BSDEs. The following lemma is crucial for this transformation and partially motivates the weak interaction assumption on $\rho$.
\begin{lemma}\label{lemma:g}
There exists a positive constant $\rho_1$ such that for all $0\leq\rho<\rho_1$, for each given $\tilde{Z}^{*}\in\H_{\mathrm{BMO}}(\R,\F,I)$ and $\tilde{Z}\in\H_{\mathrm{BMO}}(\R^d,\F,I)$, the equation
{\small\begin{align}\label{equ:inverse-1}
\tilde{Z}^{*u}_t=Z^{*u}_t+\rho \gamma^u\E\left[\int_I P_t^v\left(\frac{1}{1-\gamma^v}\left( \begin{pmatrix} \tilde{Z}_t^{v} \\ Z_t^{*v} \end{pmatrix} +  \theta_t^v  \right)\right)^{\top}(\Sigma_t^v)^{\top}\left(\Sigma_t^v {\Sigma_t^v}^{\top} \right)^{-1}\sigma_t^{*v} G(u,v)\d v\bigg|\Fc_t^*\right]
\end{align} }
has a unique solution $Z^{*}\in\H_{\mathrm{BMO}}(\R,\F,I)$, which is denoted by $Z^{*u}_t=g^{u}_t(\tilde{Z}^{*},\tilde{Z})$ for $t\in[0,T]$ and $u\in I$. Moreover,  $g(\tilde{Z}^{*},\tilde{Z})$ is linear growth under the norm $\|\cdot \|_{\H_{\mathrm{BMO}}(\R,\F,I)}$, i.e.,
\begin{align}\label{esm:g}
\|g(\tilde{Z}^{*},\tilde{Z})\|_{\H_{\mathrm{BMO}}(\R,\F,I)}\leq C(\tilde{\gamma},\overline{\gamma},\|\theta\|, C_0)\left(1+	\|\tilde{Z}^{*}\|_{\H_{\mathrm{BMO}}(\R,\F,I)}+	\|\tilde{Z}\|_{\H_{\mathrm{BMO}}(\R^{d},\F,I)}\right),
\end{align}
and is Lipschitz w.r.t $\tilde{Z}^{*}$ and $\tilde{Z}$ under the norm $\|\cdot \|_{\H_{\mathrm{BMO}}(\R,\F,I)}$, i.e.,
{\small\begin{align}\label{esm:g:lip}
	\begin{split}
		\|g(\tilde{Z}^{*1},\tilde{Z}^1)-g(\tilde{Z}^{*2},\tilde{Z}^2)\|_{\H_{\mathrm{BMO}}(\R,\F,I)}\leq C(\tilde{\gamma},\overline{\gamma})\left(\|\tilde{Z}^{*1}-\tilde{Z}^{*2}\|_{\H_{\mathrm{BMO}}(\R,\F,I)}+	\|\tilde{Z}^1-\tilde{Z}^2\|_{\H_{\mathrm{BMO}}(\R^{d},\F,I)}\right).
	\end{split}
\end{align}}
\end{lemma}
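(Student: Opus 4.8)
The plan is to view \eqref{equ:inverse-1} as a fixed-point equation for $Z^*$ in the Banach space $\H_{\mathrm{BMO}}(\R,\F,I)$ with $\tilde Z^*,\tilde Z$ treated as frozen parameters, and to apply the Banach fixed-point theorem. Define the map $\Phi:\H_{\mathrm{BMO}}(\R,\F,I)\to\H_{\mathrm{BMO}}(\R,\F,I)$ by sending $Z^*$ to the right-hand side of \eqref{equ:inverse-1}, i.e.
\begin{align*}
(\Phi(Z^*))^u_t := \tilde Z^{*u}_t-\rho\gamma^u\E\left[\int_I P_t^v\!\left(\tfrac{1}{1-\gamma^v}\left(\begin{pmatrix}\tilde Z_t^{v}\\ Z_t^{*v}\end{pmatrix}+\theta_t^v\right)\right)^{\!\top}(\Sigma_t^v)^{\top}\big(\Sigma_t^v{\Sigma_t^v}^{\top}\big)^{-1}\sigma_t^{*v}\,G(u,v)\,\d v\,\Big|\Fc_t^*\right].
\end{align*}
First I would check that $\Phi$ is well-defined, i.e. that $\Phi(Z^*)\in\H_{\mathrm{BMO}}(\R,\F,I)$: the integrand is bounded using \Cref{cd:P} (so $|P_t^v(\zeta)|\le|\zeta|+C_0$), the uniform ellipticity and boundedness of $\Sigma,\sigma^*$ from \Cref{assm:graphon}, the bounds on $\gamma$ from \Cref{assm:gamma-graphon}, and the boundedness of $\theta$. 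The conditional-expectation-and-integral operator is a contraction on BMO norms up to constants (by Jensen and the tower property, the conditional expectation w.r.t.\ $\Fc_t^*$ does not increase the relevant BMO quantity, and the $\int_I\cdots G(u,v)\d v$ term is controlled since $G$ is $I$-valued hence bounded by $1$, combined with the Fubini property of \Cref{rmk: fubini property}); the resulting estimate feeds directly into \eqref{esm:g}.

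The contraction estimate is the heart of the argument. Given two candidates $Z^{*,1},Z^{*,2}$, subtract the two defining equations; the $\tilde Z^*$ term cancels, and the difference is controlled by
\begin{align*}
\|\Phi(Z^{*,1})-\Phi(Z^{*,2})\|_{\H_{\mathrm{BMO}}(\R,\F,I)}\le \rho\,\tilde\gamma\,\cdot L\cdot\|Z^{*,1}-Z^{*,2}\|_{\H_{\mathrm{BMO}}(\R,\F,I)},
\end{align*}
where $L$ is a Lipschitz constant coming from: the $1$-Lipschitz property of the projection $P_t^v$, the factor $\tfrac{1}{1-\gamma^v}$ (bounded by $\tfrac{1}{1-\overline\gamma}$), the bounded matrix factors $(\Sigma^v)^\top(\Sigma^v{\Sigma^v}^\top)^{-1}\sigma^{*v}$ (bounded via ellipticity), and again the norm-nonexpansiveness of the $\Fc_t^*$-conditional averaging against $G$. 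Choosing $\rho_1$ so that $\rho_1\tilde\gamma L<1$ makes $\Phi$ a contraction for all $0\le\rho<\rho_1$, yielding a unique fixed point $Z^*=:g(\tilde Z^*,\tilde Z)$. The linear-growth bound \eqref{esm:g} then follows by applying the well-definedness estimate to the fixed point itself (with the $\rho$-term absorbed into a fraction of $\|g\|$ on the left, using $\rho<\rho_1$), and the Lipschitz bound \eqref{esm:g:lip} in $\tilde Z^*,\tilde Z$ follows by the same subtraction argument but now keeping $\tilde Z^*$ and $\tilde Z$ distinct and noting the fixed-point stability estimate: $\|g(\tilde Z^{*,1},\tilde Z^1)-g(\tilde Z^{*,2},\tilde Z^2)\|\le \|\tilde Z^{*,1}-\tilde Z^{*,2}\|+\rho\tilde\gamma L(\|\tilde Z^{1}-\tilde Z^{2}\|+\|g(\tilde Z^{*,1},\tilde Z^1)-g(\tilde Z^{*,2},\tilde Z^2)\|)$, then rearranging.

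I expect the main obstacle to be the measurability and integrability bookkeeping on the rich Fubini extension, rather than the analytic estimates: one must verify that $(t,u,\omega)\mapsto(\Phi(Z^*))^u_t$ is $\Bc([0,T])\otimes\Ic\boxtimes\Fc$-measurable, that the inner $\int_I\cdots\d v$ is well-defined $\mu\boxtimes\P$-a.e.\ and commutes with $\E[\cdot\mid\Fc_t^*]$ (using \cite[Lemma 2.3]{a:sun2006exact} / \Cref{rmk: fubini property}), and that taking $\sup_{u\in I}$ of the per-$u$ BMO norms genuinely produces a finite quantity — i.e.\ that uniform-in-$u$ bounds on the coefficients propagate to a uniform-in-$u$ bound on the fixed point. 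A secondary technical point is justifying that the $\Fc_t^*$-conditional expectation is BMO-nonexpansive in the precise sense needed (for each stopping time $\tau$, $\E[\int_\tau^T\|\E[\xi_s\mid\Fc_s^*]\|^2\d s\mid\Fc_\tau]\le \E[\int_\tau^T\|\xi_s\|^2\d s\mid\Fc_\tau]$ after enlarging to $\F$-stopping times), which I would handle by a conditional Jensen inequality together with the tower property relative to the filtration inclusion $\F^*\subset\F^u\subset\F$.
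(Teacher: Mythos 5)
Your proposal is correct and follows essentially the same route as the paper: the paper also treats \eqref{equ:inverse-1} as a fixed-point equation for $Z^*$ in $\H_{\mathrm{BMO}}(\R,\F,I)$, establishes a contraction with constant proportional to $\rho\tilde\gamma/(1-\overline\gamma)$ using the $1$-Lipschitz projection, \Cref{lemma:f-g} for the conditional averaging, and then obtains \eqref{esm:g} and \eqref{esm:g:lip} by plugging the fixed point back in and absorbing the $\rho$-terms. The only detail you gloss over is the bound on $\big|{\sigma^{*v}}^{\top}(\Sigma^v{\Sigma^v}^{\top})^{-1}\Sigma^v\big|$, which the paper sharpens to be strictly less than $1$ via the Sherman--Morrison formula (so the Lipschitz constant depends only on $\tilde\gamma,\overline{\gamma}$), whereas your ellipticity bound would merely fold an extra constant into $\rho_1$.
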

\begin{proof}
The equation \eqref{equ:inverse-1} has a unique solution if and only if the map $M(Z^{*};\tilde{Z}^{*},\tilde{Z})$ defined as
{\small\begin{align*}
M^u_t(Z^{*};\tilde{Z}^{*},\tilde{Z}) = \tilde{Z}^{*u}_t - \rho \gamma^u\E\left[\int_I P_t^v\left(\frac{1}{1-\gamma^v}\left( \begin{pmatrix} \tilde{Z}_t^{v} \\ Z_t^{*v} \end{pmatrix} +  \theta_t^v  \right)\right)^{\top}(\Sigma_t^v)^{\top}\left(\Sigma_t^v {\Sigma_t^v}^{\top} \right)^{-1}\sigma_t^{*v} G(u,v)\d v\bigg|\Fc_t^*\right]
\end{align*} }
has a unique fixed point. 
	
First, note that $\Big|{{\sigma_t^{*v}}^{\top}\left(\Sigma_t^v{\Sigma_t^v}^{\top}\right)^{-1}\Sigma_t^v} \Big|^{2} < 1$. For notational convenience, let us omit all $t$ subscripts.  Notice that $\Sigma^v{\Sigma^v}^{\top} = \sigma^{v}{\sigma^{v}}^{\top} + \sigma^{*v} {\sigma^{*v}}^{\top}$. Using the Sherman-Morrison formula, we have 
\begin{align*}
	\left(\Sigma^v{\Sigma^v}^{\top}\right)^{-1} = (\sigma^{v}{\sigma^{v}}^{\top})^{-1}- \frac{ (\sigma^{v}{\sigma^{v}}^{\top})^{-1}\sigma^{*v}{\sigma^{*v}}^{\top} (\sigma^{v}{\sigma^{v}}^{\top})^{-1}}{1 + {\sigma^{*v}}^{\top} (\sigma^{v}{\sigma^{v}}^{\top})^{-1}\sigma^{*v}}.
\end{align*}
Thus
\begin{align*}
	\Big|{{\sigma^{*v}}^{\top}\left(\Sigma^v{\Sigma^v}^{\top}\right)^{-1}\Sigma^v} \Big|^{2}
	= \frac{{\sigma^{*v}}^{\top} (\sigma^{v}{\sigma^{v}}^{\top})^{-1}\sigma^{*v}}{1 + {\sigma^{*v}}^{\top} (\sigma^{v}{\sigma^{v}}^{\top})^{-1}\sigma^{*v}} < 1,
\end{align*}
where the inequality follows from the fact that $\sigma^v$ is uniformly elliptic.	Moreover, as the projection operator is $1$--Lipschitz, using H\"older inequality, Cauchy inequality and \Cref{lemma:f-g} we obtain 
\begin{align*}
	\|M(Z^{*1};\tilde{Z}^{*},\tilde{Z})- M(Z^{*2};\tilde{Z}^{*},\tilde{Z})\|_{\H_{\mathrm{BMO}}(\R,\F,I)}&\leq\frac{\rho \tilde{\gamma}}{\left(1-\overline{\gamma}\right)}	\|Z^{*1}- Z^{*2}\|_{\H_{\mathrm{BMO}}(\R,\F,I)}.
\end{align*}
Therefore, by choosing $\rho<\frac{1-\overline{\gamma}}{\tilde{\gamma}}$, we get that the map $M(Z^{*};\tilde{Z}^{*},\tilde{Z})$ is a contraction under the norm $\|\cdot\|_{\H_{\mathrm{BMO}}(\R,\F,I)}$, hence it has a unique fixed point.

Next, we prove the linear growth and Lipschitz continuity of $g(\tilde{Z}^{*},\tilde{Z})$. Given that
{\small\begin{align*}
	\begin{split}
		g_{t}^{u}(\tilde{Z}^{*},\tilde{Z})=\tilde{Z}^{*u}_t-\rho \gamma^u\E\left[\int_I P_t^v\left(\frac{1}{1-\gamma^v}\left( \begin{pmatrix} \tilde{Z}_t^{v} \\ 	g_{t}^{v}(\tilde{Z}^{*},\tilde{Z}) \end{pmatrix} +  \theta_t^v  \right)\right)^{\top}(\Sigma_t^v)^{\top}\left(\Sigma_t^v {\Sigma_t^v}^{\top} \right)^{-1}\sigma_t^{*v} G(u,v)\d v\bigg|\Fc_t^*\right],
	\end{split}
\end{align*}}
we apply \Cref{cd:P}, H\"older inequality, Cauchy inequality and \Cref{lemma:f-g} to obtain:
{\small\begin{align*}
	\begin{split}
\|g(\tilde{Z}^{*},\tilde{Z})\|^{2}_{\H_{\mathrm{BMO}}(\R,\F,I)} \!\leq C(\tilde{\gamma},\overline{\gamma},\|\theta\|,C_0)\left(1+	\|\tilde{Z}^{*}\|^{2}_{\H_{\mathrm{BMO}}(\R,\F,I)}\!+	\|\tilde{Z}\|^{2}_{\H_{\mathrm{BMO}}(\R^{d},\F,I)}\!+\rho^{2}\|g(\tilde{Z}^{*},\tilde{Z})\|^{2}_{\H_{\mathrm{BMO}}(\R,\F,I)}\right).
	\end{split}
\end{align*}}
Similarly, leveraging the $1$-Lipschitz property of the projection operator, we derive:
\begin{align*}
	\begin{split}
		\|g(\tilde{Z}^{*1},\tilde{Z}^1)-g(\tilde{Z}^{*2},\tilde{Z}^2)\|^{2}_{\H_{\mathrm{BMO}}(\R,\F,I)}\leq &C(\tilde{\gamma},\overline{\gamma})\bigg(\|\tilde{Z}^{*1}-\tilde{Z}^{*2}\|^{2}_{\H_{\mathrm{BMO}}(\R,\F,I)}+	\|\tilde{Z}^1-\tilde{Z}^2\|^{2}_{\H_{\mathrm{BMO}}(\R^{d},\F,I)}\\
&+\rho^{2}\|g(\tilde{Z}^{*1},\tilde{Z}^1)-g(\tilde{Z}^{*2}, \tilde{Z}^2)\|^{2}_{\H_{\mathrm{BMO}}(\R,\F,I)}\bigg),
	\end{split}
\end{align*}
Consequently, by choosing $\rho$ sufficiently small, we establish \eqref{esm:g} and \eqref{esm:g:lip}.
\end{proof}

\begin{lemma}\label{lemma:correspondence}
For the same $\rho_1$ as in \Cref{lemma:g}, for any $0\leq\rho<\rho_1$, there is a one-to-one correspondence between a solution  to the graphon FBSDEs \eqref{eqn:graphon-FBSDE} and a solution $(\tilde{Y},\tilde{Z},\tilde{Z}^*)\in S^\infty(\R,\F,I) \times \H_{\mathrm{BMO}}(\R^{d},\F,I)\times \H_{\mathrm{BMO}}(\R,\F,I)$ to the following graphon BSDEs:
{\small\begin{equation}\label{eqn:graphon:eqbsde}
	\begin{split}
		\tilde{Y}_t^u=&-\rho \gamma^u\int_I\log(x^v)\d v-\int_{t}^{T}\tilde{Z}^u_s\cdot\d W^u_s-\int_{t}^{T}\tilde{Z}^{*u}_s\d W^{*}_{s}\\&+\int_{t}^{T}\bigg(\frac{1}{2}(|\tilde{Z}_s^{u}|^2+|g^u_{s}(\tilde{Z}^{*},\tilde{Z})|^{2})+ \frac{\gamma^u}{2(1-\gamma^u)}\bigg|\begin{pmatrix} \tilde{Z}_s^{u} \\ g^u_{s}(\tilde{Z}^{*},\tilde{Z}) \end{pmatrix}+\theta^{u}_{s}\bigg|^{2} \\
        &-\frac{\gamma^u(1-\gamma^u)}{2}\Bigg|(I-P_s^u)\left(\frac{1}{1-\gamma^u}\left( \begin{pmatrix} \tilde{Z}_s^{u} \\g^u_{s}(\tilde{Z}^{*},\tilde{Z}) \end{pmatrix} + \theta_s^u  \right)\right)\Bigg|^2\\
        &-\rho \gamma^u\E\left[\int_I\theta_s^v\cdot P_s^v\left(\frac{1}{1-\gamma^v}\left( \begin{pmatrix} \tilde{Z}_s^{v} \\ g^v_{s}(\tilde{Z}^{*},\tilde{Z}) \end{pmatrix} +  \theta_s^v  \right)\right)G(u,v)\d v\bigg|\Fc_s^*\right]\\
        &+\rho \gamma^u\E\left[\int_I\frac{1}{2}\bigg|P_s^v\left(\frac{1}{1-\gamma^v}\left( \begin{pmatrix} \tilde{Z}_s^{v} \\ g^v_{s}(\tilde{Z}^{*},\tilde{Z}) \end{pmatrix} +  \theta_s^v  \right)\right)\bigg|^{2}G(u,v)\d v\bigg|\Fc_s^*\right] \bigg) \d s.
	\end{split}
\end{equation}}
The relationship is given, for each $t\in[0,T]$, by
{\small\begin{equation}
\label{eqn:graphon:relation}
\left\{\begin{aligned}
\tilde{Y}_t^u=&Y_t^u+\rho \gamma^u\int_{0}^{t}\E\left[\int_I\theta_s^v\cdot P_s^v\left(\frac{1}{1-\gamma^v}\left( \begin{pmatrix} Z_s^{v} \\ Z_s^{*v} \end{pmatrix} +  \theta_s^v  \right)\right)G(u,v)\d v\bigg|\Fc_s^*\right]ds\\&-\rho \gamma^u\int_{0}^{t}\E\left[\int_I\frac{1}{2}\bigg|P_s^v\left(\frac{1}{1-\gamma^v}\left( \begin{pmatrix} Z_s^{v} \\ Z_s^{*v} \end{pmatrix} +  \theta_s^v  \right)\right)\bigg|^{2}G(u,v)\d v\bigg|\Fc_s^*\right]ds\\&+\rho \gamma^u\int_{0}^{t}\E\left[\int_I P_s^v\left(\frac{1}{1-\gamma^v}\left( \begin{pmatrix} Z_s^{v} \\ Z_s^{*v} \end{pmatrix} +  \theta_s^v  \right)\right)^{\top}(\Sigma_t^u)^{\top}\left(\Sigma_t^u {\Sigma_t^u}^{\top} \right)^{-1}\sigma_t^{*u} G(u,v)\d v\Bigg|\Fc_s^*\right]dW^*_s,\\
\tilde{Z}^{*u}_t=&Z^{*u}_t+\rho \gamma^u\E\left[\int_I P_t^v\left(\frac{1}{1-\gamma^v}\left( \begin{pmatrix} Z_t^{v} \\ Z_t^{*v} \end{pmatrix} +  \theta_t^v  \right)\right)^{\top}(\Sigma_t^u)^{\top}\left(\Sigma_t^u {\Sigma_t^u}^{\top} \right)^{-1}\sigma_t^{*u} G(u,v)\d v\Bigg|\Fc_t^*\right],\\
\tilde{Z}^u_t=&Z^u_t.
	\end{aligned} \right.
\end{equation}}
\end{lemma}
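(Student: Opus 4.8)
The plan is to establish the bijection explicitly through the change of variables recorded in \eqref{eqn:graphon:relation}, using \Cref{lemma:g} to invert the common-noise component $Z^{*}\leftrightarrow\tilde Z^{*}$. For a candidate solution I abbreviate $P_s^v\equiv P_s^v\big(\tfrac{1}{1-\gamma^v}\big(\binom{Z_s^v}{Z_s^{*v}}+\theta_s^v\big)\big)$ and $q_s^v:=(P_s^v)^{\top}(\Sigma_s^v)^{\top}(\Sigma_s^v{\Sigma_s^v}^{\top})^{-1}\sigma_s^{*v}$; by \eqref{eqn:graphon-optimal}, $q^v$ is exactly the $\d W^{*}$-coefficient in the forward dynamics of $\hat X^v$, while $\theta_s^v\cdot P_s^v$ and $|P_s^v|^2$ are, up to sign, its drift and squared diffusion norm. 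The argument rests on two facts about the rich Fubini extension, which I would prove first: $(i)$ for $\mu$-a.e.\ $v$ and any $\F^v$-progressive BMO integrand $\phi^v$, $\E\big[\int_0^T(\phi_s^v)^{\top}\,\d W_s^v\,\big|\,\Fc_T^{*}\big]=0$, because conditioning on the whole path of $W^{*}$ leaves $W^v$ a Brownian motion in its own augmented filtration (by independence of $W^v$ and $W^{*}$), so the idiosyncratic stochastic integral is a mean-zero martingale increment; and $(ii)$ for any $\F$-adapted BMO process $\bar q$, $\E\big[\int_0^T\bar q_s\,\d W_s^{*}\,\big|\,\Fc_T^{*}\big]=\int_0^T\E[\bar q_s\,|\,\Fc_s^{*}]\,\d W_s^{*}$, which follows by a simple-process approximation together with the observation that the increments of $W^{*}$ after time $s$ are independent of $\Fc_s$ (equivalently $\Fc_T^{*}$ and $\Fc_s$ are conditionally independent given $\Fc_s^{*}$, so that $\E[\,\cdot\,|\,\Fc_T^{*}]=\E[\,\cdot\,|\,\Fc_s^{*}]$ on $\Fc_s$-measurable integrands). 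These two facts, together with the stochastic Fubini theorem used to interchange $\int_I\cdot\,\d v$ with $\int_0^{\cdot}\cdot\,\d W^{*}$, carry the real content; I expect $(i)$--$(ii)$ to be the main obstacle, since they are exactly where the interplay between the common filtration $\F^{*}$ and the idiosyncratic and pooled filtrations $\F^v,\F$ must be handled with care, and the remainder is bookkeeping.

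For the direction FBSDE $\Rightarrow$ BSDE I would take a solution $(\hat X^u,Y^u,Z^u,Z^{*u})_{u\in I}$ of \eqref{eqn:graphon-FBSDE} with the stated BMO integrability, put $\tilde Z^u:=Z^u$, and define $\tilde Z^{*u}$ and $\tilde Y^u$ by the formulas in \eqref{eqn:graphon:relation}. Since $\tilde Z^v=Z^v$, the identity defining $\tilde Z^{*u}$ is exactly \eqref{equ:inverse-1} solved by $Z^{*u}$, so the uniqueness part of \Cref{lemma:g} (for $\rho<\rho_1$) gives $Z^{*u}=g_{\cdot}^u(\tilde Z^{*},\tilde Z)$; hence every occurrence of $Z^{*u}$ and of $P^v$ in \eqref{eqn:graphon-FBSDE} can be rewritten through $g^u$ and $\tilde Z$, which turns the FBSDE generator into the $\rho$-free part of the generator of \eqref{eqn:graphon:eqbsde}. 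Next, expanding $\hat X_T^v=\log(x^v)+\int_0^T\big(\theta_s^v\cdot P_s^v-\tfrac12|P_s^v|^2\big)\,\d s+\int_0^T(\opt_s^v)^{\top}\sigma_s^v\,\d W_s^v+\int_0^T q_s^v\,\d W_s^{*}$ inside the terminal condition $Y_T^u=-\rho\gamma^u\,\E\big[\int_I\hat X_T^v G(u,v)\,\d v\,\big|\,\Fc_T^{*}\big]$ and applying $(i)$, $(ii)$ and stochastic Fubini, I would check that the three correction terms of \eqref{eqn:graphon:relation} at $t=T$ cancel $Y_T^u$ down to the deterministic terminal value appearing in \eqref{eqn:graphon:eqbsde}. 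Finally, differentiating the defining formula for $\tilde Y^u$, inserting $\d Y_t^u$ from \eqref{eqn:graphon-FBSDE}, and collecting the $\d W^u$-, $\d W^{*}$- and $\d t$-parts shows that $(\tilde Y^u,\tilde Z^u,\tilde Z^{*u})$ solves \eqref{eqn:graphon:eqbsde}: the $\d W^u$-integrand is $Z^u=\tilde Z^u$; the $\d W^{*}$-integrand is $Z^{*u}+\rho\gamma^u\,\E\big[\int_I q_t^v G(u,v)\,\d v\,\big|\,\Fc_t^{*}\big]=\tilde Z^{*u}$ by the definition of $\tilde Z^{*u}$; and the drift matches after substituting $Z^{*u}=g^u(\tilde Z^{*},\tilde Z)$, the two $\rho$-dependent terms of the generator being precisely the two $\d t$-corrections in \eqref{eqn:graphon:relation}. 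That $\tilde Y^u\in\S^{\infty}(\R,\F,I)$ then follows from standard estimates for quadratic BSDEs with bounded terminal data and $\H_{\mathrm{BMO}}$ martingale parts.

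The direction BSDE $\Rightarrow$ FBSDE is the mirror image. Given $(\tilde Y^u,\tilde Z^u,\tilde Z^{*u})_{u\in I}\in\S^{\infty}(\R,\F,I)\times\H_{\mathrm{BMO}}(\R^d,\F,I)\times\H_{\mathrm{BMO}}(\R,\F,I)$ solving \eqref{eqn:graphon:eqbsde}, I would set $Z^u:=\tilde Z^u$ and $Z^{*u}:=g_{\cdot}^u(\tilde Z^{*},\tilde Z)$ (well defined and BMO by \Cref{lemma:g}), take $\opt^u$ from \eqref{eqn:graphon-optimal} for this $(Z^u,Z^{*u})$ — it lies in $\H_{\mathrm{BMO}}(A_u,\F^u)$ by \Cref{cd:P}, \Cref{assm:graphon} and boundedness of $\theta$ — let $\hat X^u$ be obtained by integrating the forward dynamics in \eqref{eqn:graphon-FBSDE}, and define $Y^u$ by solving \eqref{eqn:graphon:relation} for $Y^u$ (an explicit additive correction of $\tilde Y^u$). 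Reading the computations above in reverse then yields $Y_T^u=-\rho\gamma^u\,\E\big[\int_I\hat X_T^v G(u,v)\,\d v\,\big|\,\Fc_T^{*}\big]$ and shows that $(\hat X^u,Y^u,Z^u,Z^{*u})$ satisfies the backward equation of \eqref{eqn:graphon-FBSDE}; the key point is again that the relation tying $\tilde Z^{*u}$ to $Z^{*u}$ is \eqref{equ:inverse-1}, so the $\d W^{*}$-integrand obtained for $Y^u$ is exactly $Z^{*u}=g^u(\tilde Z^{*},\tilde Z)$. Since the two constructions are built from the mutually inverse maps of \eqref{eqn:graphon:relation} (an additive shift for the $Y$-component, the inversion of \Cref{lemma:g} for the $Z^{*}\leftrightarrow\tilde Z^{*}$ component), they are inverse to one another, and the measurability of $(t,u,\omega)\mapsto(\hat X_t^u,Y_t^u,Z_t^u,Z_t^{*u})$ is preserved at every step, which establishes the claimed one-to-one correspondence.
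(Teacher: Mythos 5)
Your proposal is correct and follows essentially the same route the paper takes (which it delegates to the proof of Proposition~3.5 in \cite{a:fu2023mean}): substitute the forward dynamics into the terminal condition, use that idiosyncratic stochastic integrals have zero conditional expectation given $\Fc_T^*$ while $W^*$-integrals project to $\int_0^\cdot\E[\,\cdot\,|\Fc_s^*]\,\d W^*_s$, and invert the common-noise component via the uniqueness part of \Cref{lemma:g}, exactly as encoded in \eqref{eqn:graphon:relation}. Note only that your computation yields the terminal value $-\rho\gamma^u\int_I\log(x^v)G(u,v)\,\d v$, i.e.\ with the graphon weight, which is the correct value even though \eqref{eqn:graphon:eqbsde} as printed omits $G(u,v)$.
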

\begin{proof}
See the proof of \cite[Proposition 3.5]{a:fu2023mean}.
\end{proof}

The subsequent proofs rely on the one-to-one correspondence result in \Cref{lemma:correspondence}. Therefore, in the following proofs, we assume $\rho \leq \rho_1$. Under this assumption, our problem transforms into establishing the wellposedness of graphon BSDEs \eqref{eqn:graphon:eqbsde}.
We aim to solve these BSDEs under the weak interaction assumption. {To avoid unnecessary restrictions on the time horizon $T$ imposed by the non-homogeneous term without $\rho$, we compare it with the benchmark BSDE associated with the single-agent utility maximization problem, i.e., the case where $\rho = 0$.} In this scenario, the BSDEs reduce to:
\begin{equation}\label{eqn:graphon:base}
	\begin{split}
		Y_t^u=&-\int_{t}^{T} Z^u_s\cdot\d W^u_s-\int_{t}^{T} Z^{*u}_s\d W^{*}_{s}\\
        &+\int_{t}^{T}\bigg(\frac{1}{2}(|Z^u_s|^{2}+|Z_s^{*u}|^2)+ \frac{\gamma^u}{2(1-\gamma^u)}\bigg|\begin{pmatrix} Z_s^{u} \\ Z_s^{*u} \end{pmatrix}+\theta^{u}_{s}\bigg|^{2} \\
        &-\frac{\gamma^u(1-\gamma^u)}{2}\Big|(I-P_s^u)\left(\frac{1}{1-\gamma^u}\left( \begin{pmatrix} Z_s^{u} \\ Z_s^{*u} \end{pmatrix} + \theta_s^u  \right)\right)\Big|^2\bigg) \d s .
	\end{split}
\end{equation}
\cite[Theorem 14]{a:hu2005utility}
ensures the existence of a unique triple 
$(Y,Z,Z^*)\in S^\infty(\R,\F,I) \times \H_{\mathrm{BMO}}(\R^{d},\F,I)\times \H_{\mathrm{BMO}}(\R,\F,I)$
satisfying the BSDE in \eqref{eqn:graphon:base}.
Let $(Y^o,Z^o,Z^{*,o})$ denote the unique solution to \eqref{eqn:graphon:base}. Consider a solution $(\tilde{Y},\tilde{Z},\tilde{Z}^*)$ to \eqref{eqn:graphon:eqbsde} and examine the difference:
\begin{align*}
(\overline{Y},\overline{Z},\overline{Z}^*):=
(\tilde{Y}-Y^o,\tilde{Z}-Z^o,\tilde{Z}^*-Z^{*,o}).
\end{align*}
This difference satisfies the following BSDEs:
{\small\begin{equation}\label{eqn:graphon:difference}
\begin{split}
\overline{Y}_t^u=&-\rho \gamma^u\int_I\log(x^v)\d v-\int_{t}^{T}\overline{Z}^u_s\cdot\d W^u_s-\int_{t}^{T}\overline{Z}^{*u}_s\d W^{*}_{s}\\
&+\int_{t}^{T}\bigg(\frac{1}{2}(|\overline{Z}^u_s+Z^{u,o}_s|^2-|Z^{u,o}_s|^2)+\frac{1}{2}(|g^{u,o}_{s}(\overline{Z}^{*},\overline{Z})|^{2}-|Z^{*u,o}_s|^2)\\
&+\frac{\gamma^u}{2(1-\gamma^u)}\left(\bigg|\begin{pmatrix} \overline{Z}^u_s+Z^{u,o}_s \\ g^{u,o}_{s}(\overline{Z}^{*},\overline{Z}) \end{pmatrix}+\theta^{u}_{s}\bigg|^{2}-\bigg|\begin{pmatrix} Z_s^{u,o} \\ Z_s^{*u,o} \end{pmatrix}+\theta^{u}_{s}\bigg|^{2}\right)\\
&-\frac{\gamma^u(1-\gamma^u)}{2}\left(\Big|(I-P_s^u)\left(\frac{1}{1-\gamma^u}\left( \begin{pmatrix} \overline{Z}^u_s+Z^{u,o}_s\\g^{u,o}_{s}(\overline{Z}^{*},\overline{Z}) \end{pmatrix} + \theta_s^u  \right)\right)\Big|^2\right.\\
&\quad \quad \quad \quad \quad \quad \quad 
-\left. \Big|(I-P_s^u)\left(\frac{1}{1-\gamma^u}\left( \begin{pmatrix} Z_s^{u,o} \\ Z_s^{*u,o} \end{pmatrix} + \theta_s^u  \right)\right)\Big|^2\right)\\
&-\rho \gamma^u\E\left[\int_I\theta_s^v\cdot P_s^v\left(\frac{1}{1-\gamma^v}\left( \begin{pmatrix} \overline{Z}^v_s+Z^{v,o}_s\\ g^{v,o}_{s}(\overline{Z}^{*},\overline{Z})\end{pmatrix} +  \theta_s^v  \right)\right)G(u,v)\d v\bigg|\Fc_s^*\right]\\
&+\rho \gamma^u\E\left[\int_I\frac{1}{2}\bigg|P_s^v\left(\frac{1}{1-\gamma^v}\left( \begin{pmatrix} \overline{Z}^v_s+Z^{v,o}_s\\ g^{v,o}_{s}(\overline{Z}^{*},\overline{Z}) \end{pmatrix} +  \theta_s^v  \right)\right)\bigg|^{2}G(u,v)\d v\bigg|\Fc_s^*\right] \bigg) \d s,
	\end{split}
\end{equation}}
where $g^{u,o}_{s}(\overline{Z}^{*},\overline{Z}):=g^u_{s}(\tilde{Z}^{*},\tilde{Z})=g^u_{s}(\overline{Z}^{*}+Z^{*,o},\overline{Z}+Z^{o})$. Thus, the problem is further reduced to establishing the wellposedness of BSDEs \eqref{eqn:graphon:difference}.

The generator of BSDEs \eqref{eqn:graphon:difference} can be decomposed into two parts, $f_1^{u,o}$ and $f_2^{u,o}$, where:
{\small\begin{align}\label{eqn:graphon:f1}
f_1^{u,o}(t; \overline{Z}, \overline{Z}^{*})=
&\frac{1}{2}(|\overline{Z}^u_t+Z^{u,o}_t|^2-|Z^{u,o}_t|^2)+\frac{1}{2}(|\overline{Z}^{*u}_t+Z^{*u,o}_t|^{2}-|Z^{*u,o}_t|^2)\\
&+\frac{\gamma^u}{2(1-\gamma^u)}\left(\bigg|\begin{pmatrix} \overline{Z}^u_t+Z^{u,o}_t\nonumber \\ \overline{Z}^{*u}_t+Z^{*u,o}_t \end{pmatrix}+\theta^{u}_{t}\bigg|^{2}-\bigg|\begin{pmatrix} Z_t^{u,o}\nonumber \\ Z_t^{*u,o} \end{pmatrix}+\theta^{u}_{t}\bigg|^{2}\right)\nonumber\\
&-\frac{\gamma^u(1-\gamma^u)}{2}\left(\Big|(I-P_t^u)\left(\frac{1}{1-\gamma^u}\left( \begin{pmatrix} \overline{Z}^u_t+Z^{u,o}_t\\\overline{Z}^{*u}_t+Z^{*u,o}_t\end{pmatrix} + \theta_t^u  \right)\right)\Big|^2\right.\nonumber\\
&\quad \quad \quad \quad \quad \quad \quad
-\left. \Big|(I-P_t^u)\left(\frac{1}{1-\gamma^u}\left( \begin{pmatrix} Z_t^{u,o}\nonumber \\ Z_t^{*u,o} \end{pmatrix} + \theta_t^u  \right)\right)\Big|^2\right),\\
\label{eqn:graphon:f2}
f_2^{u,o}(t; \overline{Z}, \overline{Z}^{*},\rho)=& \frac{1}{2}(|g^{u,o}_{t}(\overline{Z}^{*},\overline{Z})|^{2}-|\overline{Z}^{*u}_t+Z^{*u,o}_t|^2)\\
&+\frac{\gamma^u}{2(1-\gamma^u)}\left(\bigg|\begin{pmatrix} \overline{Z}^u_t+Z^{u,o}_t \nonumber\\ g^{u,o}_{t}(\overline{Z}^{*},\overline{Z}) \end{pmatrix}+\theta^{u}_{t}\bigg|^{2}-\bigg|\begin{pmatrix} \overline{Z}^u_t+Z^{u,o}_t\nonumber \\ \overline{Z}^{*u}_t+Z^{*u,o}_t\end{pmatrix}+\theta^{u}_{t}\bigg|^{2}\right)\\
&-\frac{\gamma^u(1-\gamma^u)}{2}\left(\Big|(I-P_t^u)\left(\frac{1}{1-\gamma^u}\left( \begin{pmatrix} \overline{Z}^u_t+Z^{u,o}_t\\g^{u,o}_{t}(\overline{Z}^{*},\overline{Z}) \end{pmatrix} + \theta_t^u  \right)\right)\Big|^2\right.\nonumber\\
&\quad \quad \quad \quad \quad \quad \quad 
-\left. \Big|(I-P_t^u)\left(\frac{1}{1-\gamma^u}\left( \begin{pmatrix} \overline{Z}^u_t+Z^{u,o}_t\nonumber\\ \overline{Z}^{*u}_t+Z^{*u,o}_t \end{pmatrix} + \theta_t^u  \right)\right)\Big|^2\right)\nonumber\\
&-\rho \gamma^u\E\left[\int_I\theta_t^v\cdot P_t^v\left(\frac{1}{1-\gamma^v}\left( \begin{pmatrix} \overline{Z}^v_t+Z^{v,o}_t\nonumber\\ g^{v,o}_{t}(\overline{Z}^{*},\overline{Z})\end{pmatrix} +  \theta_t^v  \right)\right)G(u,v)\d v\bigg|\Fc_t^*\right]\nonumber\\
&+\rho \gamma^u\E\left[\int_I\frac{1}{2}\bigg|P_t^v\left(\frac{1}{1-\gamma^v}\left( \begin{pmatrix} \overline{Z}^v_t+Z^{v,o}_t\\ g^{v,o}_{t}(\overline{Z}^{*},\overline{Z}) \end{pmatrix} +  \theta_t^v  \right)\right)\bigg|^{2}G(u,v)\d v\bigg|\Fc_t^*\right] \bigg)\nonumber.
\end{align}}

{Through calculation, we find that $f_1^{u,o}(t; \overline{Z}, \overline{Z}^{*})$ can be expressed in the following form:
\begin{align}
\label{eqn:transform}
f_1^{u,o}(t; \overline{Z}, \overline{Z}^{*}) = L_t^u(|\overline{Z}^u_t|^2+|\overline{Z}^{*u}_t|^2)+L_t^{1,u}\cdot \overline{Z}^u_t + L_t^{2,u} \overline{Z}^{*u}_t,
\end{align}
where the stochastic processes $\{L_t^u\}$, $\{L^{1,u}_t\}$, and $\{L^{2,u}_t\}$ satisfy
\begin{align*}
&|L_t^u| \leq \frac{1}{2} + \frac{5\tilde{\gamma}}{2(1-\overline{\gamma})},\\
&|L^{1,u}_t| \leq C(\tilde{\gamma},\overline{\gamma},\|\theta\|,C_0,T)(1 + |Z^{u,o}_t|), \\
&|L^{2,u}_t| \leq C(\tilde{\gamma},\overline{\gamma},\|\theta\|,C_0,T)(1+ |Z^{*u,o}_t|).
\end{align*}
According to \cite[Theorem 2.3]{a:kazamaki2006continuous}, we define a probability $\P^{u,o}$ by 
\begin{align*}
\frac{d\P^{u,o}}{d\P}=\Ec\left(\int_{0}^{\cdot}L^{1,u}_{s}\cdot \d W^u_s+\int_{0}^{\cdot}L^{2,u}_{s} \d W^{*}_s\right).    
\end{align*}
Consequently, we rewrite the \eqref{eqn:graphon:difference} as 
\begin{equation}
\begin{split}
\label{eqn:graphon:difference-new}
\overline{Y}_t^u=&-\rho \gamma^u\int_I\log(x^v)\d v-\int_{t}^{T}\overline{Z}^u_s\cdot\d W^{u,\P^{u,o}}_s-\int_{t}^{T}\overline{Z}^{*u}_s\d W^{*,\P^{u,o}}_{s}\\
&+\int_{t}^{T} L_s^u
(|\overline{Z}^u_s|^2+|\overline{Z}^{*u}_s|^2) + f_2^{u,o}(s; \overline{Z}, \overline{Z}^{*},\rho) \d s,
	\end{split}
\end{equation}
where $W^{u,\P^{u,o}}=W^{u}-\int_{0}^{\cdot}L^{1,u}_{s}\d s$ and $W^{*,\P^{u,o}}=W^{*}-\int_{0}^{\cdot}L^{2,u}_{s}\d s$ are Brownian motions under $\P^{u,o}$.

It should be noted that the norms $\|\cdot\|_{\H_{\mathrm{BMO}}\left(E,\F^u \right)}$ and 
$\|\cdot\|_{\H_{\mathrm{BMO},\P^{u,o}}\left(E,\F^u \right)}$ (where $E=\R^d$ or $\R$) are equivalent, which can be observed to be written as
\begin{equation}
\begin{split}
\label{eqn:equivalence}
C(\tilde{\gamma},\overline{\gamma},\|\theta\|,C_0,T,\|Z^o \|_{\H_{\mathrm{BMO}}(\R^d,\F,I)},\|Z^{*,o} \|_{\H_{\mathrm{BMO}}(\R,\F,I)})
\|\cdot \|_{\H_{\mathrm{BMO}}(E,\F^u)}
\leq 
\| \cdot \|_{\H_{\mathrm{BMO}, \P^{u,o}}(E,\F^u)},\\
\| \cdot \|_{\H_{\mathrm{BMO}, \P^{u,o}}(E,\F^u)}\leq 
C(\tilde{\gamma},\overline{\gamma},\|\theta\|,C_0,T,\|Z^o \|_{\H_{\mathrm{BMO}}(\R^d,\F,I)},\|Z^{*,o} \|_{\H_{\mathrm{BMO}}(\R,\F,I)})
\|\cdot \|_{\H_{\mathrm{BMO}}(E,\F^u)}.
\end{split}
\end{equation}
For the subsequent proof, we supplement the definition of the space $\mathbb{H}_{\mathrm{BMO},\P^o}(E,\F, I)$, defined as the space of families of processes $(Z^u)_{u\in I}$ such that $(t,u,\omega)\mapsto Z^u(\omega)$ is $\Bc([0,T])\otimes\Ic\boxtimes\F$--measurable and for $\mu$--almost every $u$, it holds $Z^u\in \mathbb{H}_{\mathrm{BMO},\P^{u,o}}(E,\F^u)$.
This space is equipped with the norm 
\begin{align*}
\|Z\|_{\mathbb{H}_{\mathrm{BMO},\P^o}(E,\F, I)}:= \sup_{u\in I}\|Z^u\|_{\mathbb{H}_{\mathrm{BMO},\P^{u,o}}(E,\F^u)},    
\end{align*}
which makes it a Banach space.
Noting that the equivalence coefficient in \eqref{eqn:equivalence} is independent of $u$, we obtain the following equivalence relation:
\begin{equation*}
\begin{split}
&C(\tilde{\gamma},\overline{\gamma},\|\theta\|,C_0,T,\|Z^o \|_{\H_{\mathrm{BMO}}(\R^d,\F,I)},\|Z^{*,o} \|_{\H_{\mathrm{BMO}}(\R,\F,I)})
\|\cdot\|_{\mathbb{H}_{\mathrm{BMO}}(E,\F, I)}
\leq 
\|\cdot\|_{\mathbb{H}_{\mathrm{BMO}, \P^{o}}(E,\F, I)}\\
&\|\cdot\|_{\mathbb{H}_{\mathrm{BMO}}(E,\F, I)}\leq
C(\tilde{\gamma},\overline{\gamma},\|\theta\|,C_0,T,\|Z^o \|_{\H_{\mathrm{BMO}}(\R^d,\F,I)},\|Z^{*,o} \|_{\H_{\mathrm{BMO}}(\R,\F,I)})
\|\cdot\|_{\mathbb{H}_{\mathrm{BMO}}(E,\F, I)}.
\end{split}
\end{equation*}

Now we are ready for the proof of the wellposedness of graphon BSDEs \eqref{eqn:graphon:difference-new}. Our proof is inspired by  \cite[Proposition 3]{a:briand2008quadratic},  \cite[Lemma 3.7]{a:fu2023mean} and \cite[Proposition 6.2]{a:tangpi2024optimal}.

\begin{theorem}\label{thm:R-exist}
There exists a constant $R^*$ such that for each fixed $0<R \leq R^*$, there exists a positive constant $\rho(R)$ depending on $R$ such that for each $0\leq \rho \leq \rho(R)$, the BSDEs \eqref{eqn:graphon:difference-new} admits a unique solution  $(\overline{Y},\overline{Z},\overline{Z}^*)\in S^\infty(\R^{d},\F,I) \times \H_{\mathrm{BMO},\P^o}(\R^{d},\F,I)\times \H_{\mathrm{BMO},\P^o}(\R,\F,I)$ with $\begin{pmatrix} \overline{Z} \\ \overline{Z}^* \end{pmatrix}$ located in the $R$-ball of 
$\H_{\mathrm{BMO},\P^o}(\R^{d+1},\F,I)$. 
\end{theorem}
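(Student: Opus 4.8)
The plan is to set up a fixed-point argument on a ball in the Banach space $\H_{\mathrm{BMO},\P^o}(\R^{d+1},\F,I)$, exploiting the fact that the ``dangerous'' part of the generator (the quadratic-growth term $L^u_s(|\overline Z^u_s|^2 + |\overline Z^{*u}_s|^2)$) has a coefficient $L^u_s$ that is uniformly bounded by $\tfrac12 + \tfrac{5\tilde\gamma}{2(1-\overline\gamma)}$, while everything that prevents a direct contraction is multiplied by $\rho$ and can therefore be made arbitrarily small. Concretely, for a given radius $R$ I would define a solution map $\Phi$ that sends a frozen pair $(\overline Z, \overline Z^*)$ in the $R$-ball of $\H_{\mathrm{BMO},\P^o}(\R^{d+1},\F,I)$ to the $Z$-component of the unique solution of the BSDEs obtained by substituting the frozen processes into $f_2^{u,o}$ and into $g^{u,o}$, leaving the genuinely quadratic term $L^u_s(|\overline Z^u_s|^2+|\overline Z^{*u}_s|^2)$ in place. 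That frozen BSDE, for each fixed $u$, is a one-dimensional quadratic BSDE with a bounded terminal condition ($-\rho\gamma^u\int_I \log(x^v)\,\d v$ is bounded by $\rho\tilde\gamma\|\log(x)\|$, using \Cref{cdn:Convergence}(4)) and a driver of the form (bounded constant)$\times$(quadratic) plus a process that lies in $\H^1$ by the BMO property of $Z^o$, $Z^{*,o}$ and the linear-growth bound \eqref{esm:g} on $g$; hence Kobylanski-type existence and the John--Nirenberg / \cite{a:kazamaki2006continuous} estimates apply fibrewise, and the $\Bc([0,T])\otimes\Ic\boxtimes\F$--measurability in $u$ is preserved because the construction is measurable in the data.

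The key steps, in order, would be: (i) \emph{A priori bound.} Using Itô's formula applied to $\e^{\beta \overline Y^u}$ or a direct BMO estimate (as in \cite[Proposition 3]{a:briand2008quadratic}), show that the solution of the frozen BSDE satisfies $\|(\overline Z,\overline Z^*)\|_{\H_{\mathrm{BMO},\P^o}(\R^{d+1},\F,I)}^2 \le C_1\big(\rho^2 + \rho R + \rho^2 R^2\big)$ for a constant $C_1$ depending only on $\tilde\gamma,\overline\gamma,\underline\gamma,\|\theta\|,C_0,T,\|\log(x)\|$ and on $\|Z^o\|_{\H_{\mathrm{BMO}}}, \|Z^{*,o}\|_{\H_{\mathrm{BMO}}}$; the point is that the quadratic self-term contributes through $\|\overline Y\|_{\S^\infty}$, which in turn is controlled by the smallness of the terminal data and of $f_2$, both $O(\rho)$. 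Then fix $R^* $ and, for $R\le R^*$, choose $\rho(R)$ small enough that $C_1(\rho^2+\rho R+\rho^2R^2)\le R^2$, so $\Phi$ maps the $R$-ball into itself. (ii) \emph{Contraction.} For two frozen inputs $(\overline Z^{(1)},\overline Z^{*(1)})$ and $(\overline Z^{(2)},\overline Z^{*(2)})$ in the $R$-ball, write the BSDE satisfied by the difference; the generator difference splits into a part coming from $L^u_s(|\overline Z^u_s|^2+|\overline Z^{*u}_s|^2)$, which is handled by linearising (the two $Z$-solutions are in the $R$-ball, so the quadratic term is locally Lipschitz with constant $\le C R$ in the relevant BMO norm after a further Girsanov change of measure), and a part coming from $f_2$ and $g$, which is Lipschitz with constant $\le C\rho$ by \eqref{esm:g:lip} and the $1$-Lipschitz property of the projections $P^v$. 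This yields $\|\Phi(\cdot^{(1)}) - \Phi(\cdot^{(2)})\| \le (C_2 R + C_3\rho)\,\|\cdot^{(1)}-\cdot^{(2)}\|$; shrinking $R^*$ so that $C_2 R^* \le \tfrac14$ and then $\rho(R)$ so that $C_3\rho(R)\le\tfrac14$ gives a genuine contraction. (iii) Conclude by Banach's fixed point theorem that there is a unique fixed point in the $R$-ball, and that the pair $(\overline Y,\overline Z,\overline Z^*)$ reconstructed from it solves \eqref{eqn:graphon:difference-new} with $\overline Y\in\S^\infty$; uniqueness \emph{within the ball} is immediate, and measurability in $u$ is inherited at each step.

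The main obstacle I expect is step (i)--(ii) interplay caused by the genuinely quadratic self-term $L^u_s(|\overline Z^u_s|^2+|\overline Z^{*u}_s|^2)$ with its $O(1)$ (not $O(\rho)$) coefficient: one cannot make this term small by taking $\rho$ small, so the contraction has to come from restricting to a ball of small radius $R$, and for that one needs the a priori estimate to close, i.e. the output BMO norm must be bounded by something like $C_1\rho(1+R)$ rather than $C_1(1+R^2)$. Getting this requires carefully tracking that the \emph{only} source of size in the frozen BSDE is the $\rho$-small terminal condition and the $\rho$-small $f_2$ — the $L^u$-term is homogeneous quadratic and therefore, via the standard ``$\e^{\beta\overline Y}$'' linearisation, contributes a factor that is controlled once $\|\overline Y\|_{\S^\infty}$ is small, which is again $O(\rho)$. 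A secondary technical point is that the change of measure $\P^{u,o}$ and the auxiliary Girsanov transform used to linearise the quadratic difference must be uniform in $u$; this is exactly what the $u$-independence of the equivalence constants in \eqref{eqn:equivalence} is there to guarantee, so one must make sure all estimates are phrased in the $\sup_u$ norms throughout.
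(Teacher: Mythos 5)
Your proposal follows essentially the same route as the paper's proof: freeze only the $\rho$-small part $f_2^{u,o}$ (and $g^{u,o}$) while keeping the quadratic self-term, solve the frozen quadratic BSDE fibrewise via Briand--Hu/Kazamaki-type estimates, show the output BMO norm is $O(\rho)$ so the map sends the $R$-ball into itself, and obtain a contraction by combining $R$-smallness for the quadratic self-term with $\rho$-smallness for the frozen-input dependence, with measurability in $u$ and uniqueness handled as you indicate. The only differences are cosmetic (e.g.\ the paper uses the test function $\Tc(y)$ and Itô on $\e^{\kappa t}|\Delta\overline{Y}^u_t|^2$ where you invoke an exponential transform and a further Girsanov change), so the proposal is correct and matches the paper's argument.
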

\begin{proof}
The proof's central idea revolves around constructing a suitable mapping $\Psi$ that transforms the question of solution existence into a fixed-point problem, which is then resolved using the contraction mapping theorem. The proof is structured as follows: Step 1 provides a rigorous construction and analysis of the mapping $\Psi$, while Step 2 demonstrates that the constructed $\Psi$ is indeed a contraction.

Step 1: Construction and analysis of the mapping $\Psi$.

We begin by restricting the domain of $\Psi$ to $(\hat{Z},\hat{Z}^{*})\in\H_{\mathrm{BMO},\P^o}(\R^{ d},\F,I)\times\H_{\mathrm{BMO},\P^o}(\R,\F,I)$ satisfying  $\|\hat{Z}\|^{2}_{\H_{\mathrm{BMO},\P^o}(\R^{ d},\F,I)}+\|\hat{Z}^{*}\|^{2}_{\H_{\mathrm{BMO},\P^o}(\R,\F,I)}\leq R^{2}$, where $R$ is to be determined. 
We define $\Psi(\hat{Z},\hat{Z}^{*})$ as the solution $(\overline{Z},\overline{Z}^{*})\in\H_{\mathrm{BMO},\P^o}(\R^{ d},\F,I)\times\H_{\mathrm{BMO},\P^o}(\R,\F,I)$ obtained from BSDEs \eqref{eqn:graphon:fixed} below:
\begin{equation}
\begin{split}
\label{eqn:graphon:fixed}
\overline{Y}_t^u=&-\rho \gamma^u\int_I\log(x^v)\d v-\int_{t}^{T}\overline{Z}^u_s\cdot\d W^{u,\P^{u,o}}_s-\int_{t}^{T}\overline{Z}^{*u}_s\d W^{*,\P^{u,o}}_{s}\\
&+\int_{t}^{T} L_s^u
(|\overline{Z}^u_s|^2+|\overline{Z}^{*u}_s|^2) + f_2^{u,o}(s; \hat{Z}, \hat{Z}^{*},\rho) \d s, \quad u\in I,
	\end{split}
\end{equation}
which differs from \eqref{eqn:graphon:difference-new} in that the variables of function $f_2^{u,o}$ are replaced with $(\hat{Z},\hat{Z}^{*})$.
To establish that $\Psi$ is well-defined, 
we must demonstrate the existence and uniqueness of a solution $(\overline{Z},\overline{Z}^{*})$ to BSDEs \eqref{eqn:graphon:fixed} that belongs to $\H_{\mathrm{BMO},\P^o}(\R^{ d},\F,I)\times\H_{\mathrm{BMO},\P^o}(\R,\F,I)$.

We now proceed with the rigorous proof. 
We first address the existence of a solution to the graphon BSDEs \eqref{eqn:graphon:fixed}. 
Denote $c_1= \frac{1}{2} + \frac{5\tilde{\gamma}}{2(1-\overline{\gamma})}$. 
From the definition of $f_2^{u,o}$ in \eqref{eqn:graphon:f2}, we obtain
\begin{align}
\label{eqn:cond-f2}
\| |2c_1 f_2^{u,o}(\cdot ; \hat{Z}, \hat{Z}^{*},\rho)|^{\frac{1}{2}}\|^{2}_{\H_{\mathrm{BMO},\P^{u,o}}(\R,\F^u)}\leq \rho C_1(R),
\end{align}
where $C_1$ is an increasing positive locally bounded function depending on $\rho_1$, $\tilde{\gamma}$, $\overline{\gamma}$, $\|\theta\|$, $C_0$, $T$, $\|Z^o\|_{\H_{\mathrm{BMO}}(\R^d,\F,I)}$ and $\|Z^{*,o} \|_{\H_{\mathrm{BMO}}(\R,\F,I)}$.
Choose $\rho_2\leq \rho_1$ such that $\rho_2 C_1(R) < 1$. 
This implies that for any $0 \leq \rho \leq \rho_2$:
\begin{align*}
    \| |2c_1 f_2^{u,o}(\cdot ; \hat{Z}, \hat{Z}^{*},\rho)|^{\frac{1}{2}}\|^{2}_{\H_{\mathrm{BMO},\P^{u,o}}(\R,\F^u)}\leq \rho C_1(R)<1,
\end{align*}
which implies by \cite[Theorem 2.2]{a:kazamaki2006continuous} that
\begin{align*}
\mathbb{E}^{\mathbb{P}^{u,o}}\left[\e^{2c_1 \int_t^T f_2^{u,o}(s ; \hat{Z}, \hat{Z}^{*},\rho) ds} \big| \mathcal{F}_t\right] \leq \frac{1}{1 - \| |2c_1 f_2^{u,o}(\cdot ; \hat{Z}, \hat{Z}^{*},\rho)|^{\frac{1}{2}}\|^{2}_{\H_{\mathrm{BMO},\P^{u,o}}(\R,\F^u)}} < \infty.    
\end{align*}
Consequently, all conditions in \cite[Proposition 3]{a:briand2008quadratic} are satisfied and we get a solution $(\overline{Y},\overline{Z},\overline{Z}^*)$ of \eqref{eqn:graphon:fixed} such that $\overline{Y}$ satisfies the estimate 
\begin{align}\label{esti:fixed:y}
\|\overline{Y}^{u}\|_{\S^\infty(\R,\F^u)}\leq |\rho\gamma^u\int_I\log(x^v)\d v|-\frac{1}{2c_1}
\log(1-\| |2c_1 f_2^{u,o}(\cdot ; \hat{Z}, \hat{Z}^{*},\rho)|^{\frac{1}{2}}\|^{2}_{\H_{\mathrm{BMO},\P^{u,o}}(\R,\F^u)}).
\end{align}

In order to obtain the estimate for $(\overline{Z}^u,\overline{Z}^{*u})$, we define $\Tc(y)=\frac{\frac{1}{2c_1}e^{2c_1|y|}-\frac{1}{2c_1}-|y|}{c_1}$. Applying It\^{o}'s formula, we obtain: 
{\small\begin{equation*}
	\begin{split}
\Tc(\overline{Y}^{u}_t)=&\Tc\left(-\gamma^u\rho\int_I\log(x^v)\d v\right)+\int_{t}^{T}\Tc'(\overline{Y}^{u}_s)(L_s^u
(|\overline{Z}^u_s|^2+|\overline{Z}^{*u}_s|^2) + f_2^{u,o}(s; \hat{Z}, \hat{Z}^{*},\rho))\d s\\
&-\frac{1}{2}\int_{t}^{T}\Tc''(\overline{Y}^{u}_s)(|\overline{Z}^u_s|^{2}+|\overline{Z}^{*u}_s|^{2})\d s -\int_{t}^{T}\Tc'(\overline{Y}^{u}_s)\overline{Z}^u_s\cdot\d W^{u,\P^{u,o}}_s-\int_{t}^{T}\Tc'(\overline{Y}^{u}_s)\overline{Z}^{*u}_s\d W^{*u,\P^{u,o}}_{s}\\
\leq&\Tc\left(-\gamma^u\rho\int_I\log(x^v)\d v\right)+\int_{t}^{T}\left(c_1|\Tc'(\overline{Y}^{u}_s)|-\frac{1}{2}\Tc''(\overline{Y}^{u}_s)\right)(|\overline{Z}^u_s|^{2}+|\overline{Z}^{*u}_s|^{2})\d s\\
&+\int_{t}^{T}|\Tc'(\overline{Y}^{u}_s)|f_2^{u,o}(s; \hat{Z}, \hat{Z}^{*},\rho) \d s 
-\int_{t}^{T}\Tc'(\overline{Y}^{u}_s)\overline{Z}^u_s\cdot\d W^u_s-\int_{t}^{T}\Tc'(\overline{Y}^{u}_s)\overline{Z}^{*u}_s\d W^{*u}_{s}\\
=&\Tc\left(-\gamma^u\rho\int_I\log(x^v)\d v\right)-\int_{t}^{T}(|\overline{Z}^u_s|^{2}+|\overline{Z}^{*u}_s|^{2})\d s
+\int_{t}^{T}|\Tc'(\overline{Y}^{u}_s)|f_2^{u,o}(s; \hat{Z}, \hat{Z}^{*},\rho) \d s \\
&-\int_{t}^{T}\Tc'(\overline{Y}^{u}_s)\overline{Z}^u_s\cdot\d W^u_s-\int_{t}^{T}\Tc'(\overline{Y}^{u}_s)\overline{Z}^{*u}_s\d W^{*u}_{s}.
	\end{split}
\end{equation*}}
Note that $|\Tc'(y)|=\frac{\e^{2c_1|y|}-1}{c_1}$ is increasing for $|y|$,  for any stopping time $\tau$, it holds that
{\small\begin{equation*}
	\begin{split}
&\E^{\mathbb{P}^{u,o}} \bigg[\int_{\tau}^{T}(|\overline{Z}^u_s|^{2}+|\overline{Z}^{*u}_s|^{2})\d s\bigg|\Fc_{\tau}\bigg]\\
\leq & \Tc\left(-\gamma^u\rho\int_I\log(x^v)\d v\right) + \E^{\mathbb{P}^{u,o}} \bigg[\int_{\tau}^{T}|\Tc'(\overline{Y}^{u}_s)| f_2^{u,o}(s; \hat{Z}, \hat{Z}^{*},\rho) \d s \bigg|\Fc_{\tau}\bigg]\\
\leq &\frac{\e^{\rho\tilde{\gamma}|\int_I\log(x^v)\d v|}-1-c_1\underline{\gamma}\rho|\int_I\log(x^v)\d v|}{2c_1^2}+\frac{\e^{2c_1\|\overline{Y}^{u}\|_{\S^\infty(\R,\F^u)}}-1}{2c_1} \| | f_2^{u,o}(\cdot ; \hat{Z}, \hat{Z}^{*},\rho)|^{\frac{1}{2}}\|^{2}_{\H_{\mathrm{BMO},\P^{u,o}}(\R,\F^u)}.
	\end{split}
\end{equation*}}
As such $(\overline{Z}^u,\overline{Z}^{*u})\in\H_{\mathrm{BMO},\P^{u,o}}(\R^{ d},\F^u)\times\H_{\mathrm{BMO},\P^{u,o}}(\R,\F^u)$. Combining \eqref{eqn:cond-f2} and \eqref{esti:fixed:y} yields
\begin{equation}\label{est:tz}
	\begin{split}
&\|\overline{Z}^u\|^{2}_{\H_{\mathrm{BMO},\P^{u,o}}(\R^{ d},\F)}+\|\overline{Z}^{*u}\|^{2}_{\H_{\mathrm{BMO},\P^{u,o}}(\R,\F)}\\
\leq &\frac{\e^{\rho\tilde{\gamma}|\int_I\log(x^v)\d v|}-1-c_1\underline{\gamma}\rho|\int_I\log(x^v)\d v|}{2c_1^2}+\frac{\rho C_1(R)}{4c_1^2(1-\rho C_1(R))}\e^{2c_1\rho \tilde{\gamma}|\int_I\log(x^v)\d v|}. 
	\end{split}
\end{equation}
Furthermore, following the arguments of \cite[Proposition 6.1, 6.2]{a:tangpi2024optimal} and references therein (using Picard iteration), we establish that $\left(t,u,\omega\right)\mapsto\left(\overline{Y}^{u}_t,\overline{Z}^{u}_t,\overline{Z}^{*u}_t\right)$ is measurable. Combining with \eqref{esti:fixed:y} and \eqref{est:tz}, we know that $(\overline{Y}, \overline{Z},\overline{Z}^{*})\in \S^\infty(\F,\R,I)\times \H_{\mathrm{BMO},\P^{o}}(\R^{ d},\F,I)\times\H_{\mathrm{BMO},\P^{o}}(\R,\F,I)$ is a solution to graphon BSDEs \eqref{eqn:graphon:fixed} with 
\begin{equation}\label{est:tz-1}
	\begin{split}
&\|\overline{Z}\|^{2}_{\H_{\mathrm{BMO},\P^{o}}(\R^{ d},\F,I)}+\|\overline{Z}^{*}\|^{2}_{\H_{\mathrm{BMO},\P^{o}}(\R,\F,I)}\\
\leq &\frac{\e^{\rho\tilde{\gamma}|\int_I\log(x^v)\d v|}-1-c_1\underline{\gamma}\rho|\int_I\log(x^v)\d v|}{2c_1^2}+\frac{\rho C_1(R)}{4c_1^2(1-\rho C_1(R))}\e^{2c_1\rho \tilde{\gamma}|\int_I\log(x^v)\d v|}. 
	\end{split}
\end{equation}

Next, we prove the uniqueness of the solution to the graphon BSDEs \eqref{eqn:graphon:fixed}. For any two solutions $(\overline{Y}^{1},\overline{Z}^{1},\overline{Z}^{*1})$ and $(\overline{Y}^{2},\overline{Z}^{2},\overline{Z}^{*2})$ in $\S^\infty(\R,\F,I) \times \H_{\mathrm{BMO},\P^{o}}(\R^{ d},\F,I) \times \H_{\mathrm{BMO},\P^{o}}(\R,\F,I)$, due to the equivalence of norms, we know that $(\overline{Y}^{1},\overline{Z}^{1},\overline{Z}^{*1})$ and $(\overline{Y}^{2},\overline{Z}^{2},\overline{Z}^{*2})$ also belong to  $\S^\infty(\R,\F,I) \times \H_{\mathrm{BMO}}(\R^{ d},\F,I) \times \H_{\mathrm{BMO}}(\R,\F,I)$. Combining the equivalence relation of BSDEs \eqref{eqn:graphon:difference} and \eqref{eqn:graphon:difference-new}, we deduce that $(\overline{Y}^{1},\overline{Z}^{1},\overline{Z}^{*1})$ and $(\overline{Y}^{2},\overline{Z}^{2},\overline{Z}^{*2})$ are also solutions to the BSDEs
 \begin{equation*}
\begin{split}
\overline{Y}_t^u = &-\rho \gamma^u\int_I\log(x^v)\d v-\int_{t}^{T}\overline{Z}^u_s\cdot\d W^{u}_s-\int_{t}^{T}\overline{Z}^{*u}_s\d W^{*}_{s}\\
&+\int_{t}^{T} f_1^{u,o}(s; \overline{Z}, \overline{Z}^{*})+ f_2^{u,o}(s; \hat{Z}, \hat{Z}^{*},\rho) \d s, \quad u\in I.
\end{split}
\end{equation*}
Fixing $u\in I$, the difference $(\Delta\overline{Y}^u,\Delta\overline{Z}^u,\Delta\overline{Z}^{*u})$ follows
\begin{align}
\label{eqn:graphon:unique}
\Delta\overline{Y}^{u} = \int_{t}^{T} \left(f_1^{u,o}(s; \overline{Z}^1, \overline{Z}^{*1}) -f_1^{u,o}(s; \overline{Z}^2, \overline{Z}^{*2})\right)\d s 
- \int_{t}^{T}\Delta\overline{Z}^u_s\cdot\d W^u_s-\int_{t}^{T}\Delta\overline{Z}^{*u}_s\d W^{*u}_{s}.
\end{align}
By the definition of $f_1^{u,o}$ in \eqref{eqn:graphon:f1}, similar to the treatment in \eqref{eqn:transform}, we   represent
\begin{align}
f_1^{u,o}(t; \overline{Z}^1, \overline{Z}^{*1}) - f_1^{u,o}(t; \overline{Z}^2, \overline{Z}^{*2}) = \tilde{L}^{1,u}_{t} \cdot \Delta\overline{Z}^u_t + \tilde{L}^{2,u}_t \Delta\overline{Z}^{*u}_t,
\end{align}
where the stochastic processes $\{\tilde{L}^{1,u}_t\}$ and $\{\tilde{L}^{2,u}_t\}$ satisfy
\begin{align*}
|\tilde{L}^{1,u}_t| &\leq C(\tilde{\gamma},\overline{\gamma},\|\theta\|,C_0)(1 + |Z^{u,o}_t| + |\overline{Z}^{u1}_t| + |\overline{Z}^{u2}_t|), \\
|\tilde{L}^{2,u}_t| &\leq C(\tilde{\gamma},\overline{\gamma},\|\theta\|,C_0)(1 + |Z^{*u,o}_t| + |\overline{Z}^{*u1}_t| + |\overline{Z}^{*u2}_t|).
\end{align*}
Using \cite[Theorem 2.3]{a:kazamaki2006continuous}, we  define a probability $\Q^{u}$ by 
\begin{align*}
\frac{d\Q^{u}}{d\P}=\Ec\left(\int_{0}^{\cdot}\tilde{L}^{1,u}_{s}\cdot \d W^u_s+\int_{0}^{\cdot}\tilde{L}^{2,u}_{s} \d W^{*}_s\right). 
\end{align*}
Consequently, we rewrite the \eqref{eqn:graphon:unique} as 
\begin{align}
\begin{split}
\label{eqn:graphon:unique-new}
\Delta\overline{Y}^{u}=- \int_{t}^{T}\Delta\overline{Z}^u_s\cdot\d W^{u,\Q^u}_s-\int_{t}^{T}\Delta\overline{Z}^{*u}_s\d W^{*u,\Q^u}_{s},
	\end{split}
\end{align}
where $W^{u,\Q^{u}}=W^{u}-\int_{0}^{\cdot}\tilde{L}^{1,u}_{s}\d s$ and $W^{*,\P^{u}}=W^{*}-\int_{0}^{\cdot}\tilde{L}^{2,u}_{s}\d s$ are Brownian motions under $\Q^{u}$.
Evidently, it holds that $\Delta\overline{Y}^{u}=\Delta\overline{Z}^{u}=\Delta\overline{Z}^{*u}=0$ and the uniqueness result follows. 

Thus, so far, we have established the existence and uniqueness of the solution to the BSDEs \eqref{eqn:graphon:fixed}. Consequently, we  conclude that $\Psi$ is well-defined.

Step 2: Proving $\Psi$ is a contraction.

According to the estimate \eqref{est:tz-1} in Step 1, we choose sufficiently small $\tilde{\rho}(R)\leq\rho_2$ such that for any $0 \leq \rho \leq \tilde{\rho}(R)$, we have $\|\overline{Z}\|^{2}_{\H_{\mathrm{BMO},\P^o}(\F,\R^{ d},I)}+\|\overline{Z}^{*}\|^{2}_{\H_{\mathrm{BMO},\P^o}(\F,\R,I)}\leq R^{2}$. 
Consequently, we now know that $\Psi$ is a map from the $R$-ball of $\H_{\mathrm{BMO},\P^o}(\R^{ d},\F,I)\times\H_{\mathrm{BMO},\P^o}(\R,\F,I)$ into itself. To complete our proof, it remains to be shown that this mapping is a contraction.

For any two $(\hat{Z}^1,\hat{Z}^{*1})$ and $(\hat{Z}^2,\hat{Z}^{*2})$ in the $R$-ball of $\H_{\mathrm{BMO},\P^o}(\R^{ d},\F,I)\times\H_{\mathrm{BMO},\P^o}(\R,\F,I)$, Step 1 yields unique solutions
$(\overline{Y}^1,\overline{Z}^1,\overline{Z}^{*1})$ and $(\overline{Y}^2,\overline{Z}^2,\overline{Z}^{*2})$.
Similar to the proof above, we know that for $i=1,2$, $(\overline{Y}^{i},\overline{Z}^{i},\overline{Z}^{*i})\in \H_{\mathrm{BMO}}(\R^{ d},\F,I)\times\H_{\mathrm{BMO}}(\R,\F,I)$ and is a solution to the BSDEs:
\begin{equation*}
\begin{split}
\overline{Y}_t^{ui} = &-\rho \gamma^u\int_I\log(x^v)\d v-\int_{t}^{T}\overline{Z}^{u i}_s\cdot\d W^{u}_s-\int_{t}^{T}\overline{Z}^{*u i}_s\d W^{*}_{s}\\
&+\int_{t}^{T} f_1^{u,o}(s; \overline{Z}^i, \overline{Z}^{*i})+ f_2^{u,o}(s; \hat{Z}^i, \hat{Z}^{*i},\rho) \d s, \quad u\in I.
\end{split}
\end{equation*}
For the sake of notational simplicity, we will continue to use $(\Delta\overline{Y},\Delta\overline{Z}, \Delta\overline{Z}^{*})$ to represent the differences. Additionally, we introduce $\Delta\hat{Z}$ and $\Delta\hat{Z}^{*}$ to denote the corresponding differences.

Let $\kappa>0$ and $\epsilon$ be constants to be determined.
Notice that 
\begin{equation}
\begin{split}
|f_1^{u,o}(t; \overline{Z}^1, \overline{Z}^{*1}) - f_1^{u,o}(t; \overline{Z}^2, \overline{Z}^{*2})| \leq &  C(\tilde{\gamma},\overline{\gamma},\|\theta\|,C_0)(1 + |Z^{o}| + |\overline{Z}^{u1}_t| + |\overline{Z}^{u2}_t|) |\Delta\overline{Z}^u_t| \\
&+ C(\tilde{\gamma},\overline{\gamma},\|\theta\|,C_0)(1 + |Z^{*,o}| + |\overline{Z}^{*u1}_t| + |\overline{Z}^{*u2}_t|) |\Delta\overline{Z}^{*u}_t|. 
\end{split}
\end{equation}
Applying It\^o's formula to $\e^{\kappa t}|\Delta \overline{Y}^u_t|^2$, for any $\tau \in \Tc(\F^u)$, 
{\small\begin{align*}
\e^{\kappa \tau}|\Delta \overline{Y}_\tau^u|^2  = 
&\int_\tau^T2\e^{\kappa s}\Delta \overline{Y}^u_s\left(f_1^{u,o}(s;\overline{Z}^{1}\!,\overline{Z}^{*1})\!-\!f_1^{u,o}(s;\overline{Z}^{2}\!,\overline{Z}^{*2})\!+\!f_2^{u,o}(s;\hat{Z}^{1}\!,\hat{Z}^{*1},\rho)\!-\!f_2^{u,o}(s;\hat{Z}^{2}\!,\hat{Z}^{*2},\rho)\right)\d s\\
&- \kappa\int_\tau^T\e^{\kappa s}|\Delta \overline{Y}^u_s|^2 \d s
- \int_\tau^T \e^{\kappa s}(|\Delta \overline{Z}^u_s|^2+|\Delta \overline{Z}^{*u}_s|^2)\d s\\
&- \int_t^T2\e^{\kappa s}\Delta \overline{Y}^u_s\Delta \overline{Z}^u_s\cdot\d W^u_s- \int_t^T2\e^{\kappa s}\Delta \overline{Y}^u_s\Delta \overline{Z}^{*u}_s\d W^{*u}_s,\\
\leq &\int_\tau^T\left(\frac{1}{\epsilon}-\kappa\right)\e^{\kappa s}|\Delta \overline{Y}^u_s|^{2}\d s\\
&+\int_\tau^T\epsilon C^{(1)}(\tilde{\gamma},\overline{\gamma},\|\theta\|,C_0, \|Z^o \|_{\H_{\mathrm{BMO}}(\R^d,\F,I)},\|Z^{*,o} \|_{\H_{\mathrm{BMO}}(\R,\F,I)})
 \e^{\kappa s}\bigg(|\Delta \overline{Z}^{*u}_s|^{2}+|\Delta \overline{Z}^u_s|^{2}\bigg)\d s\\
&+\int_\tau^T C(\tilde{\gamma},\overline{\gamma},\|\theta\|,C_0)\e^{\kappa T}|\Delta\overline{Y}^u_s| (|\Delta \overline{Z}^{*u}_s|\!+\!|\Delta \overline{Z}^{u}_s|)(|\overline{Z}^{u1}|\!+\!|\overline{Z}^{u2}|\!+\!|\overline{Z}^{*u1}|\!+\!|\overline{Z}^{*u2}| ) \d s\\
&+\int_\tau^T 2\e^{\kappa T}|\Delta\overline{Y}^u_s|
|f_2^{u,o}(s;\hat{Z}^{1},\hat{Z}^{*1},\rho)-f_2^{u,o}(s;\hat{Z}^{2},\hat{Z}^{*2},\rho)|\d s
-\! \int_\tau^T\e^{\kappa s}(|\Delta \overline{Z}^u_s|^2+|\Delta \overline{Z}^{*u}_s|^2)\d s\\
&- \int_t^T2\e^{\kappa s}\Delta \overline{Y}^u_s\Delta \overline{Z}^u_s\cdot\d W^u_s
- \int_t^T2\e^{\kappa s}\Delta \overline{Y}^u_s\Delta \overline{Z}^{*u}_s\d W^{*u}_s,
\end{align*}}
where we have used  Young's inequality.
Choose $\epsilon$ such that 
\begin{align*}
\epsilon C^{(1)}(\tilde{\gamma},\overline{\gamma},\|\theta\|,C_0,\|Z^o \|_{\H_{\mathrm{BMO}}(\R^d,\F,I)},\|Z^{*,o} \|_{\H_{\mathrm{BMO}}(\R,\F,I)}) \leq\frac{1}{2},  
\end{align*}
then select $\kappa$ such that $\kappa>\frac{1}{\epsilon}$.
Taking conditional expectation on both sides,  applying \Cref{lemma:f-g} and Young's inequality again, as well as the equivalence of norms, we obtain
{\small\begin{align*}
&|\Delta\overline{Y}_\tau^u|^2 + \frac{1}{2} \E\bigg[ \int_\tau^T \e^{\kappa s}(|\Delta \overline{Z}^u_s|^2+|\Delta \overline{Z}^{*u}_s|^2)\d s\Big|\mathcal{F}^u_\tau \bigg]\\
\leq& C(\tilde{\gamma},\overline{\gamma},\|\theta\|,C_0,T,\|Z^o \|,\|Z^{*,o} \|) R \|\Delta\overline{Y}\|_{\S^{\infty}(\mathbb{R},\mathbb{F}, I)} \left(\|\Delta \overline{Z}\|_{\mathbb{H}_{\mathrm{BMO},\kappa}(\mathbb{R}^d,\mathbb{F},I) } +\|\Delta \overline{Z}^*\|_{\mathbb{H}_{\mathrm{BMO},\kappa}(\mathbb{R},\mathbb{F},I) }\right)\\
&+ C(\tilde{\gamma},\overline{\gamma},\|\theta\|,C_0,T,\|Z^o \|,\|Z^{*,o} \|) 
\|\Delta\overline{Y}\|_{\S^{\infty}(\mathbb{R},\mathbb{F}, I)} 
\||f_2^{u,o}(s;\hat{Z}^{1},\hat{Z}^{*1},\rho)-f_2^{u,o}(s;\hat{Z}^{2},\hat{Z}^{*2},\rho)|^\frac{1}{2}\|^{2}_{\H_{\mathrm{BMO}}(\R,\F^u)}\\
\leq & \|\Delta\overline{Y}\|_{\S^{\infty}(\mathbb{R},\mathbb{F}, I)}^2+C^{(2)}(\tilde{\gamma},\overline{\gamma},\|\theta\|,C_0,T,\|Z^o \|,\|Z^{*,o} \| )  R^2 \left(\|\Delta \overline{Z}\|^2_{\mathbb{H}_{\mathrm{BMO},\kappa}(\mathbb{R}^d,\mathbb{F},I) } +\|\Delta \overline{Z}^*\|^2_{\mathbb{H}_{\mathrm{BMO},\kappa}(\mathbb{R},\mathbb{F},I) }\right)\\
&+C(\tilde{\gamma},\overline{\gamma},\|\theta\|,C_0,T,\|Z^o \|,\|Z^{*,o} \|)  \||f_2^{u,o}(s;\hat{Z}^{1},\hat{Z}^{*1},\rho)-f_2^{u,o}(s;\hat{Z}^{2},\hat{Z}^{*2},\rho)|^\frac{1}{2}\|^{4}_{\H_{\mathrm{BMO}}(\R,\F^u)}.
\end{align*}}
where we write $\|Z^o \|$ and $\|Z^{*,o} \|$ as shorthand notations for $\|Z^o \|_{\H_{\mathrm{BMO}}(\R^d,\F,I)}$ and $\|Z^{*,o} \|_{\H_{\mathrm{BMO}}(\R,\F,I)}$, respectively.
Choose $R^*$ such that $C^{(2)}(\tilde{\gamma},\overline{\gamma},\|\theta\|,C_0,T,\|Z^o \|,\|Z^{*,o} \|)(R^*)^2\leq \frac{1}{4}$. Then for any $0\leq R \leq R^*$, taking the supremum over all stopping times $\tau\in 
\Tc(\F^u)$ and then all $u \in I$, we  obtain
\begin{align*}
&\|\Delta \overline{Z}\|^2_{\mathbb{H}_{\mathrm{BMO},\kappa}(\mathbb{R}^d,\mathbb{F},I) } +\|\Delta \overline{Z}^*\|^2_{\mathbb{H}_{\mathrm{BMO},\kappa}(\mathbb{R},\mathbb{F},I) }\\
\leq & 
C(\tilde{\gamma},\overline{\gamma},\|\theta\|,C_0,T,\|Z^o \|,\|Z^{*,o} \|) \||f_2^{u,o}(s;\hat{Z}^{1},\hat{Z}^{*1},\rho)-f_2^{u,o}(s;\hat{Z}^{2},\hat{Z}^{*2},\rho)|^\frac{1}{2}\|^{4}_{\H_{\mathrm{BMO}}(\R,\F^u)}.
\end{align*}
Notice that $g^{u,o}_{s}(\overline{Z}^{*},\overline{Z})$ can be written as:
{\small
\begin{align*}
g^{u,o}_{s}(\overline{Z}^{*},\overline{Z})=\overline{Z}^{*}+Z^{*,o}-\rho \gamma^u\E\left[\int_I 
\! P_t^v\left(\!\frac{1}{1\!-\!\gamma^v}\!\left(\! \begin{pmatrix} \overline{Z}_t^{v}\!+\!Z_t^{v,o} \\ g^{v,o}_{s}(\overline{Z}^{*}\!,\!\overline{Z}) \end{pmatrix} \! + \! \theta_t^v  \!\right)\!\right)^{\top}\!\!(\Sigma_t^v)^{\top}\!\!\left(\Sigma_t^v {\Sigma_t^v}^{\top} \right)^{-1}\!\!\sigma_t^{*v} G(u,v)\d v\bigg|\Fc_t^*\right].
\end{align*}}
By evaluating each term in \eqref{eqn:graphon:f2} and  utilizing the linear growth and $1$--Lipschitz continuity of the projection operator, along with \Cref{lemma:g}, \Cref{lemma:f-g} and the equivalence of norms, we  conclude that 
\begin{equation*}
\begin{split}
&\||f_2^{u,o}(s;\hat{Z}^{1},\hat{Z}^{*1},\rho)-f_2^{u,o}(s;\hat{Z}^{2},\hat{Z}^{*2},\rho)|^\frac{1}{2}\|^{2}_{\H_{\mathrm{BMO}}(\R,\F^u)}\\
\leq &\rho C(\tilde{\gamma},\overline{\gamma},\|\theta\|,C_0,T,\|Z^o \|,\|Z^{*,o} \|) 
\left(\|\Delta \hat{Z}\|_{\mathbb{H}_{\mathrm{BMO}}(\mathbb{R}^d,\mathbb{F},I) } +\|\Delta \hat{Z}^*\|_{\mathbb{H}_{\mathrm{BMO}}(\mathbb{R},\mathbb{F},I) }\right)\\
&\times \left(1+\|\hat{Z}^{1}\|_{\H_{\mathrm{BMO}}(\R^{d},\F,I)} + \|\hat{Z}^{*1}\|_{\H_{\mathrm{BMO}}(\R,\F,I)}+
\|\hat{Z}^{2}\|_{\H_{\mathrm{BMO}}(\R^{d},\F,I)}+\|\hat{Z}^{*2}\|_{\H_{\mathrm{BMO}}(\R,\F,I)}\right)\\
\leq & \rho C(\tilde{\gamma},\hat{\gamma},\|\theta\|,C_0,T,\|Z^o \|,\|Z^{*,o} \|)R \left(\|\Delta \hat{Z}\|_{\mathbb{H}_{\mathrm{BMO},\kappa}(\mathbb{R}^d,\mathbb{F},I) } +\|\Delta \hat{Z}^*\|_{\mathbb{H}_{\mathrm{BMO},\kappa}(\mathbb{R},\mathbb{F},I) }\right)\\
\leq & \rho C(\tilde{\gamma},\hat{\gamma},\|\theta\|,C_0,T,\|Z^o \|,\|Z^{*,o} \|) \left(\|\Delta \hat{Z}\|_{\mathbb{H}_{\mathrm{BMO},\kappa}(\mathbb{R}^d,\mathbb{F},I) } +\|\Delta \hat{Z}^*\|_{\mathbb{H}_{\mathrm{BMO},\kappa}(\mathbb{R},\mathbb{F},I) }\right)
\end{split}
\end{equation*}
Combining the above arguments, we obtain the following result:
\begin{equation}
\label{eqn:contraction_inequality}
\begin{split}
&\|\Delta \overline{Z}\|^2_{\mathbb{H}_{\mathrm{BMO},\kappa}(\mathbb{R}^d,\mathbb{F},I) } +\|\Delta \overline{Z}^*\|^2_{\mathbb{H}_{\mathrm{BMO},\kappa}(\mathbb{R},\mathbb{F},I) }\\
\leq & \rho^2 C^{(3)}(\tilde{\gamma},\hat{\gamma},\|\theta\|,C_0,T,\|Z^o \|,\|Z^{*,o} \|)
\left(\|\Delta \hat{Z}\|^2_{\mathbb{H}_{\mathrm{BMO},\kappa}(\mathbb{R}^d,\mathbb{F},I) } +\|\Delta \hat{Z}^*\|^2_{\mathbb{H}_{\mathrm{BMO},\kappa}(\mathbb{R},\mathbb{F},I) }\right).
\end{split}
\end{equation}
Choose $\rho(R)\leq \tilde{\rho}(R)$ such that $\rho(R)^2 C^{(3)}(\tilde{\gamma},\hat{\gamma},\|\theta\|,C_0,T,\|Z^o \|,\|Z^{*,o} \|)\leq \frac{1}{2}$. Then, for any $\rho\leq\rho(R)$, we have
\begin{align*}
\|\Delta \overline{Z}\|^2_{\H_{\mathrm{BMO},\kappa}(\mathbb{R}^d,\mathbb{F},I) } +\|\Delta \overline{Z}^*\|^2_{\mathbb{H}^2_{\mathrm{BMO},\kappa}(\mathbb{R},\mathbb{F},I) }\leq\frac{1}{2}\left(\| \hat{Z}\|^2_{\mathbb{H}_{\mathrm{BMO},\kappa}(\mathbb{R}^d,\mathbb{F},I) }+\| \hat{Z}^{*}\|^2_{\mathbb{H}_{\mathrm{BMO},\kappa}(\mathbb{R},\mathbb{F},I) }\right).
\end{align*}
This inequality demonstrate the contraction property of $\Psi$, thus completing our proof.
\end{proof}

Due to the equivalence of the norms $\|\cdot \|_{\H_{\mathrm{BMO},\mathbb{P}^o}(E,\F,I)}$ and $\| \cdot \|_{\H_{\mathrm{BMO}}(E,\F,I)}$ (where $E=\R^{d}$ or $\R$), and combining the equivalence relation of BSDEs \eqref{eqn:graphon:difference} and \eqref{eqn:graphon:difference-new}, we obtain the following corollary of  \Cref{thm:R-exist}.
\begin{corollary}
\label{thm:R-exist-1}
There exists a constant $R^*$ such that for each fixed $0<R \leq R^*$, there exists a positive constant $\rho(R)$ depending on $R$ such that for each $0\leq \rho \leq \rho(R)$, the BSDEs \eqref{eqn:graphon:difference} admits a unique solution  $(\overline{Y},\overline{Z},\overline{Z}^*)\in S^\infty(\R^{d},\F,I) \times \H_{\mathrm{BMO}}(\R^{d},\F,I)\times \H_{\mathrm{BMO}}(\R,\F,I)$ with $\begin{pmatrix} \overline{Z} \\ \overline{Z}^* \end{pmatrix}$ located in the $R$-ball of 
$\H_{\mathrm{BMO}}(\R^{d+1},\F,I)$. 
\end{corollary}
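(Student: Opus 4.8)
The plan is to transport the conclusion of \Cref{thm:R-exist} from the $\P^{o}$-weighted setting and the formulation \eqref{eqn:graphon:difference-new} back to the unweighted spaces and the original formulation \eqref{eqn:graphon:difference}, using only facts already in hand. Two ingredients suffice. First, for $\mu$-almost every $u$ the measure $\P^{u,o}$ is equivalent to $\P$ (its density is the Dol\'eans exponential of a BMO martingale with the bounded integrands $L^{1,u},L^{2,u}$), and the passage from \eqref{eqn:graphon:difference} to \eqref{eqn:graphon:difference-new} consists precisely of absorbing the drift $\int_{0}^{\cdot}(L^{1,u}_{s},L^{2,u}_{s})\,\d s$ into the driving Brownian motions via the exact decomposition \eqref{eqn:transform} of $f_{1}^{u,o}$; hence a triple $(\overline{Y},\overline{Z},\overline{Z}^{*})$ solves \eqref{eqn:graphon:difference} if and only if it solves \eqref{eqn:graphon:difference-new}, and the measurability in $(t,u,\omega)$ is unaffected. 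Second, by \eqref{eqn:equivalence} the norms $\|\cdot\|_{\H_{\mathrm{BMO},\P^{o}}(E,\F,I)}$ and $\|\cdot\|_{\H_{\mathrm{BMO}}(E,\F,I)}$ are equivalent with constants depending only on the fixed data $\tilde{\gamma},\overline{\gamma},\|\theta\|,C_{0},T,\|Z^{o}\|_{\H_{\mathrm{BMO}}(\R^{d},\F,I)},\|Z^{*,o}\|_{\H_{\mathrm{BMO}}(\R,\F,I)}$ and, crucially, not on $u$.

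With these in place I would argue as follows. Let $R^{*}_{0},\rho_{0}(\cdot)$ be the constants from \Cref{thm:R-exist}. For a given small $R$, apply \Cref{thm:R-exist} with a ball radius $\tilde{R}$ of the form $c\,R$, where $c$ is the relevant equivalence constant, and set $\rho(R):=\rho_{0}(\tilde{R})$; by shrinking the absolute constant $R^{*}$ one can ensure $\tilde{R}\le R^{*}_{0}$. The resulting triple $(\overline{Y},\overline{Z},\overline{Z}^{*})$ solves \eqref{eqn:graphon:difference-new}, hence \eqref{eqn:graphon:difference}, lies in $S^{\infty}(\R,\F,I)\times\H_{\mathrm{BMO}}(\R^{d},\F,I)\times\H_{\mathrm{BMO}}(\R,\F,I)$ by the norm equivalence, and has $(\overline{Z},\overline{Z}^{*})$ in the desired $R$-ball of $\H_{\mathrm{BMO}}(\R^{d+1},\F,I)$ after the same adjustment; this gives existence. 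For uniqueness, any solution of \eqref{eqn:graphon:difference} in the stated spaces with $(\overline{Z},\overline{Z}^{*})$ in the $R$-ball of $\H_{\mathrm{BMO}}(\R^{d+1},\F,I)$ is, by the norm equivalence, a member of the corresponding $\P^{o}$-weighted spaces with $(\overline{Z},\overline{Z}^{*})$ in the $\tilde{R}$-ball, and, by the first ingredient, a solution of \eqref{eqn:graphon:difference-new}; the uniqueness clause of \Cref{thm:R-exist} then forces it to coincide with the triple constructed above.

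I expect no substantive obstacle: the corollary is a change-of-measure and change-of-norm bookkeeping exercise resting entirely on \Cref{thm:R-exist} and on identities established when \eqref{eqn:graphon:difference-new} was derived. The only point deserving a line of care is reconciling the ball radii in the existence and uniqueness halves through the two-sided norm equivalence, whose constants need not be reciprocal; this is handled simply by choosing $R^{*}$ sufficiently small and $\rho(R)$ accordingly — exactly in the spirit of \Cref{thm:R-exist} — and the precise relation between the radius here and the one there plays no role in the subsequent convergence analysis.
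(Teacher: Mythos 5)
Your proposal is correct and takes essentially the same route as the paper, which deduces the corollary from \Cref{thm:R-exist} in one step by invoking the equivalence of the $\P$- and $\P^{o}$-weighted BMO norms \eqref{eqn:equivalence} together with the equivalence of the formulations \eqref{eqn:graphon:difference} and \eqref{eqn:graphon:difference-new} obtained when the linear part of $f_1^{u,o}$ was absorbed by the change of measure. Your additional bookkeeping on adjusting the ball radius and $\rho(R)$ through the (u-independent) equivalence constants merely makes explicit what the paper leaves implicit.
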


Synthesizing the above proof, 
we  conclude that the graphon McKean-Vlasov FBSDEs \eqref{eqn:graphon-FBSDE} admit a solution $(Y,Z,Z^*) \in \! S^\infty(\R,\F,I) \times \H_{\mathrm{BMO}}(\R^{d},\F,I)\times \H_{\mathrm{BMO}}(\R,\F,I)$.
Finally, invoking \Cref{prop:graphon-FBSDE}, we  assert that the graphon game admits a graphon Nash equilibrium. This completes the proof of \Cref{thm:graphon-game.existence}.
\qed

For the convenience of the proofs in the next section, we derive the following corollary from the above results:
\begin{corollary}
\label{thm:R-exist-2}
There exists a positive constant $R^*$ such that for each fixed $0<R\leq R^*$, there exists positive constants $T(R)\leq T$ and $\rho(R)\leq \rho_1$ 
such that for all $0 \leq \rho \leq \rho(R)$, $0 \leq \widetilde{T} \leq T(R)$, there exists a unique $(\tilde{Y},\tilde{Z},\tilde{Z}^*)\in S^\infty(\R^{d},\F,I) \times \H_{\mathrm{BMO}}(\R^{d},\F,I)\times \H_{\mathrm{BMO}}(\R,\F,I)$ with $\begin{pmatrix} \tilde{Z} \\ \tilde{Z}^* \end{pmatrix}$ located in the $R$-ball of 
$\H_{\mathrm{BMO}}(\R^{d+1},\F,I)$
satisfying \eqref{eqn:graphon:eqbsde} in time $[0, \widetilde{T}]$.
\end{corollary}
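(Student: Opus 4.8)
The idea is to obtain \Cref{thm:R-exist-2} from \Cref{thm:R-exist}, \Cref{thm:R-exist-1} and \Cref{lemma:correspondence}, which already produce — for the full horizon $[0,T]$ — a solution of \eqref{eqn:graphon:eqbsde} whose reduced part $(\overline{Z},\overline{Z}^{*}):=(\tilde{Z}-Z^{o},\tilde{Z}^{*}-Z^{*,o})$ lies in a small ball, and to upgrade this to an arbitrary short horizon $[0,\widetilde{T}]$ with all thresholds chosen uniformly in $\widetilde{T}$. The key structural remark is that \eqref{eqn:graphon:eqbsde} is by construction the sum of the benchmark equation \eqref{eqn:graphon:base} and the difference equation \eqref{eqn:graphon:difference}, so it suffices to control the benchmark $(Y^{o},Z^{o},Z^{*,o})$ and the difference part separately on $[0,\widetilde{T}]$ and then to add them back, keeping the total inside the prescribed $R$-ball.

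I would first record two quantitative facts about the benchmark equation \eqref{eqn:graphon:base} posed on $[0,\widetilde{T}]$ with $\widetilde{T}\in(0,T]$. By \cite[Theorem 14]{a:hu2005utility} it has a unique solution $(Y^{o},Z^{o},Z^{*,o})\in S^\infty(\R,\F,I)\times\H_{\mathrm{BMO}}(\R^{d},\F,I)\times\H_{\mathrm{BMO}}(\R,\F,I)$. (i) Running the It\^o/Tanaka computation on $\Tc(Y^{o})$ exactly as in the proof of \Cref{thm:R-exist} — now with vanishing terminal condition and a generator whose value at the origin and whose linear part are bounded uniformly in $u$ — yields a bound on $\|Z^{o}\|_{\H_{\mathrm{BMO}}(\R^{d},\F,I)}+\|Z^{*,o}\|_{\H_{\mathrm{BMO}}(\R,\F,I)}$ by a constant $M_{0}=M_{0}(\tilde{\gamma},\overline{\gamma},\underline{\gamma},\|\theta\|,C_{0},T)$ that does not depend on $\widetilde{T}$, since that estimate is monotone in the horizon. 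Consequently every generic constant $C(\tilde{\gamma},\overline{\gamma},\|\theta\|,C_{0},T,\|Z^{o}\|,\|Z^{*,o}\|)$ occurring in the proofs of \Cref{thm:R-exist} and \Cref{thm:R-exist-1} may be replaced by a constant depending only on $T$ and $M_{0}$, hence uniform in $\widetilde{T}\le T$. (ii) The same computation, now using $\|Y^{o}\|_{S^\infty(\R,\F,I)}=O(\widetilde{T})$ (again because the terminal condition vanishes), shows $\|Z^{o}\|_{\H_{\mathrm{BMO}}(\R^{d},\F,I)}+\|Z^{*,o}\|_{\H_{\mathrm{BMO}}(\R,\F,I)}\to0$ as $\widetilde{T}\downarrow0$.

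With these in hand the construction is routine. Let $R^{*}$ be chosen as in \Cref{thm:R-exist} but with the contraction $\Psi$ required to be a strict contraction on the ball of radius $2R$ instead of $R$ (this only replaces $C^{(2)}(\cdots)(R^{*})^{2}\le\tfrac14$ by $C^{(2)}(\cdots)(2R^{*})^{2}\le\tfrac14$, where now $C^{(2)}$ involves only $T$ and $M_{0}$). Fix $0<R\le R^{*}$. By \eqref{est:tz-1}, whose right-hand side tends to $0$ as $\rho\downarrow0$, pick $\rho(R)\le\rho_{1}$ so small that the solution $(\overline{Y},\overline{Z},\overline{Z}^{*})$ of \eqref{eqn:graphon:difference} on $[0,\widetilde{T}]$ furnished by \Cref{thm:R-exist-1} satisfies $\|\overline{Z}\|_{\H_{\mathrm{BMO}}}^{2}+\|\overline{Z}^{*}\|_{\H_{\mathrm{BMO}}}^{2}\le(R/2)^{2}$ for every $\widetilde{T}\le T$. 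By fact (ii) pick $T(R)\le T$ so that $\|Z^{o}\|_{\H_{\mathrm{BMO}}}^{2}+\|Z^{*,o}\|_{\H_{\mathrm{BMO}}}^{2}\le(R/2)^{2}$ whenever $\widetilde{T}\le T(R)$. Then $\tilde{Y}:=\overline{Y}+Y^{o}$, $\tilde{Z}:=\overline{Z}+Z^{o}$, $\tilde{Z}^{*}:=\overline{Z}^{*}+Z^{*,o}$ solves \eqref{eqn:graphon:eqbsde} on $[0,\widetilde{T}]$, lies in $S^\infty\times\H_{\mathrm{BMO}}\times\H_{\mathrm{BMO}}$, inherits the $\Bc([0,T])\otimes\Ic\boxtimes\F$--measurability from $(\overline{Y},\overline{Z},\overline{Z}^{*})$ and the benchmark as in the proof of \Cref{thm:R-exist}, and by the triangle inequality $\|\tilde{Z}\|_{\H_{\mathrm{BMO}}}^{2}+\|\tilde{Z}^{*}\|_{\H_{\mathrm{BMO}}}^{2}\le R^{2}$. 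For uniqueness inside the $R$-ball: if $(\tilde{Y}',\tilde{Z}',\tilde{Z}^{*\prime})$ is another such solution, then $(\tilde{Y}'-Y^{o},\tilde{Z}'-Z^{o},\tilde{Z}^{*\prime}-Z^{*,o})$ solves \eqref{eqn:graphon:difference} on $[0,\widetilde{T}]$ with reduced part of $\H_{\mathrm{BMO}}$--norm at most $R+R/2\le2R$, hence lies in the ball on which $\Psi$ is a contraction; the uniqueness in \Cref{thm:R-exist} (transported through \Cref{lemma:correspondence}) forces it to equal $(\overline{Y},\overline{Z},\overline{Z}^{*})$, whence $(\tilde{Y}',\tilde{Z}',\tilde{Z}^{*\prime})=(\tilde{Y},\tilde{Z},\tilde{Z}^{*})$.

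I expect the main obstacle to be precisely the bookkeeping just sketched: one must fix the constants in the strict order $R^{*}\to R\to\rho(R)\to T(R)$ without circularity, while keeping the ball on which the fixed-point map of \Cref{thm:R-exist} operates small enough to remain a contraction yet large enough to contain every candidate solution after translation by the benchmark $(Z^{o},Z^{*,o})$; and one must establish the two quantitative properties of the benchmark — the $\widetilde{T}$--uniform $\H_{\mathrm{BMO}}$ bound and the $\widetilde{T}\downarrow0$ decay — which, though standard for quadratic BSDEs with bounded data and vanishing terminal value, are exactly what make the short-horizon statement (and hence its later use for the $n$-agent game) possible.
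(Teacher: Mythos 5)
Your proposal is correct and follows essentially the same route as the paper: shrink the horizon so that the benchmark pair $(Z^o,Z^{*,o})$ of \eqref{eqn:graphon:base} lies in the $\frac{R}{2}$-ball of $\H_{\mathrm{BMO}}(\R^{d+1},\F,I)$, invoke \Cref{thm:R-exist-1} for the difference equation \eqref{eqn:graphon:difference} with $\rho$ small, and recover the solution via $(\tilde{Z},\tilde{Z}^*)=(\overline{Z}+Z^o,\overline{Z}^*+Z^{*,o})$ and the triangle inequality. Your additional bookkeeping --- the $\widetilde{T}$-uniform bound $M_0$ on the benchmark so that the constants in \Cref{thm:R-exist} do not degenerate, and enlarging the contraction ball to radius $2R$ so that uniqueness covers every candidate whose $(\tilde{Z},\tilde{Z}^*)$ lies in the stated $R$-ball --- is a legitimate tightening of details the paper leaves implicit.
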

\begin{proof}
First, it is important to note that our definition of the BMO norm adjusts with changes in the time horizon. Consequently, we select a sufficiently small $T(R) \leq T$ such that for any $0 \leq \widetilde{T} \leq T(R)$, $\begin{pmatrix} Z^o \\ Z^{*,o} \end{pmatrix}$ is situated within the $\frac{R}{2}$-ball of $\H_{\mathrm{BMO}}(\R^{d+1},\F,I)$ when the time horizon is set as $[0,\widetilde{T}]$. Combining these observations with the results of \Cref{thm:R-exist-1} and utilizing the relation $(\tilde{Z},\tilde{Z}^*) = (\overline{Z}+Z^o, \overline{Z}^*+Z^{*,o})$, we arrive at the desired result.
\end{proof}}

\subsection{Wellposedness of \texorpdfstring{$n$}{n}-dimensional FBSDEs: proof of \texorpdfstring{\Cref{thm:n-game.existence}}{Theorem~\ref*{thm:n-game.existence}}}
\label{subsec:n-agent}
This theorem considers the case without common noise. For notational simplicity, we maintain the previously introduced notation. Consistent with the content in \Cref{sec:charac}, we  derive the FBSDE characterization of the problem. In the same way as in Lemma \ref{lemma:correspondence}, we  further transform the FBSDE characterizations into BSDE characterizations.

\begin{lemma}\label{lemma:noncom-n-correspondence}
There is a one-to-one correspondence between a solution to the $n$-agent FBSDEs and a solution 
$(\Yc^{i,n},(\Zc^{i j,n})_{1\leq j\leq n})_{1\leq i\leq n}\in (\mathbb{S}^\infty(\R^{d},\F^n) \times (\H_{\mathrm{BMO}}(\R^{d},\F^n))^n)^n$
to the following $n$-agent BSDEs:
{\small\begin{equation}\label{eq:noncom-n-bsde}
\left \{
\begin{aligned}
-\d \Yc_t^{i,n}=&\left(\frac{1}{2}\sumall \left|\Zc_t^{i j,n}-\rho\gamma^i\lambda_{i j}^n P_t^j\left(\frac{1}{1-\gamma^j}(\Zc_t^{jj,n}+\theta_t^j)\right) \right|^2+ \frac{\gamma^i}{2(1-\gamma^i)}| \Zc_t^{ii,n}+\theta^i_t|^2 \right.\\
&-\frac{\gamma^i(1-\gamma^i)}{2}\left|(I-P_t^i)\left(\frac{1}{1-\gamma^i}(\Zc_t^{ii,n}+ \theta_t^i)\right)\right|^2  \\
& \left. -\rho \gamma^i \sumj \lambda_{i j}^n \left((\theta_t^j)^{\top} P_t^j\left(\frac{1}{1-\gamma^j}(\Zc_t^{jj,n}+\theta_t^j)\right)  -\frac{1}{2}\left|P_t^j\left(\frac{1}{1-\gamma^j}(\Zc_t^{jj,n}+\theta_t^j)\right)\right|^2\right) \right) \d t \\
&-\sumall \Zc_t^{i j,n} \cdot \d W_t^j,\\
\Yc_T^{i,n}=&0,\quad i=1,\cdots,n,
\end{aligned} \right.
\end{equation}}

The relationship is given, for each $t\in[0,T]$, by
\begin{equation}\label{eqn:noncom-n:relation}
\left\{
\begin{aligned}
\Yc_t^{i,n}&=Y_t^{i,n}+\rho\gamma^i\sumj \lambda_{i j}^n\hat{X}^{j,n}_t,\\
\Zc_t^{i j,n}&=Z_t^{i j,n}+\rho\gamma^i\lambda_{i j}^n P_t^j\left(\frac{1}{1-\gamma^j}(Z_t^{jj,n}+\theta_t^j)\right).
\end{aligned}\right.
\end{equation}
\end{lemma}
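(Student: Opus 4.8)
The plan is to mimic the proof of \Cref{lemma:correspondence} (i.e., \cite[Proposition 3.5]{a:fu2023mean}), adapting the Girsanov-type change of variables to the finite-agent, common-noise-free setting. First I would write down the $n$-agent FBSDE system explicitly: by setting $\sigma^{*i}=0$ in \eqref{eq:intro.fbsde}, the forward equations become $\d\hat X^i_t = \big((\opt^i_t)^\top\sigma^i_t\theta^i_t - \tfrac12|(\sigma^i_t)^\top\opt^i_t|^2\big)\d t + (\opt^i_t)^\top\sigma^i_t\,\d W^i_t$ with $\hat X^i_0=\log x^i$, the backward equations carry the generator from \eqref{eq:intro.fbsde} (now with $Z^{*i}$ absent and the $\theta$-terms one-dimensional-noise-free), and the terminal condition is $Y^i_T = -\rho\gamma^i\overline{\hat X}^i_T = -\rho\gamma^i\sum_{j\neq i}\lambda^n_{ij}\hat X^{j,\opt^j}_T$, where by \eqref{eqn:n-optimal} $\opt^i_t = (\sigma^i_t{\sigma^i_t}^\top)^{-1}\sigma^i_t P^i_t\big(\tfrac{1}{1-\gamma^i}(Z^{ii}_t+\theta^i_t)\big)$. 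The coupling is only through this terminal condition.

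Next I would perform the substitution \eqref{eqn:noncom-n:relation}. Define $\Yc^{i,n}_t := Y^{i,n}_t + \rho\gamma^i\sum_{j\neq i}\lambda^n_{ij}\hat X^{j,n}_t$; then $\Yc^{i,n}_T = Y^{i,n}_T + \rho\gamma^i\overline{\hat X}^i_T = 0$, which gives the desired zero terminal condition. Differentiating, $\d\Yc^{i,n}_t = \d Y^{i,n}_t + \rho\gamma^i\sum_{j\neq i}\lambda^n_{ij}\,\d\hat X^{j,n}_t$; the martingale part of $\d\hat X^{j,n}_t$ is $(\opt^j_t)^\top\sigma^j_t\,\d W^j_t = P^j_t\big(\tfrac{1}{1-\gamma^j}(Z^{jj}_t+\theta^j_t)\big)^\top\d W^j_t$ (using $(\sigma^j_t)^\top\opt^j_t = P^j_t(\cdots)$), which forces the redefinition $\Zc^{ij,n}_t = Z^{ij,n}_t + \rho\gamma^i\lambda^n_{ij}P^j_t\big(\tfrac{1}{1-\gamma^j}(Z^{jj,n}_t+\theta^j_t)\big)$ as in \eqref{eqn:noncom-n:relation}. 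Note that for $j=i$ the correction vanishes since $\lambda^n_{ii}=0$, so $\Zc^{ii,n}=Z^{ii,n}$ and the $\opt^i$ expression is unchanged; this is the key simplification that avoids having to invert a $g$-type map as in \Cref{lemma:g}. It remains to collect the drift terms: add the old generator of $Y^{i,n}$ to the finite-variation parts $\rho\gamma^i\sum_{j\neq i}\lambda^n_{ij}\big((\opt^j_t)^\top\sigma^j_t\theta^j_t - \tfrac12|(\sigma^j_t)^\top\opt^j_t|^2\big)$ $= \rho\gamma^i\sum_{j\neq i}\lambda^n_{ij}\big((\theta^j_t)^\top P^j_t(\cdots) - \tfrac12|P^j_t(\cdots)|^2\big)$, and rewrite $\tfrac12\sum_j|Z^{ij,n}_t|^2 = \tfrac12\sum_j|\Zc^{ij,n}_t - \rho\gamma^i\lambda^n_{ij}P^j_t(\cdots)|^2$. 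Matching these against the generator in \eqref{eq:noncom-n-bsde} is a direct algebraic verification.

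Finally I would establish the correspondence in both directions and the regularity bookkeeping. Given an FBSDE solution with $Z^{ij,n}\in\H_{\mathrm{BMO}}$, the $\opt^j$ are in $\Ac_j$ (projection is $1$-Lipschitz, $\theta$ bounded), so each $\hat X^{j,n}$ has a BMO martingale part, hence $\Zc^{ij,n}\in\H_{\mathrm{BMO}}$; and $\Yc^{i,n} = Y^{i,n} + \rho\gamma^i\sum_{j\neq i}\lambda^n_{ij}\hat X^{j,n}$ is bounded because $Y^{i,n}\in\S^\infty$ and $\hat X^{j,n}\in\S^\infty$ (bounded drift plus BMO martingale on $[0,T]$), giving $\Yc^{i,n}\in\S^\infty$. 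Conversely, given a solution of \eqref{eq:noncom-n-bsde} in the stated space, one reads off $\opt^i_t$ from $\Zc^{ii,n}$, solves the forward equations for $\hat X^{i,n}$, and inverts \eqref{eqn:noncom-n:relation} to recover $(Y^{i,n},Z^{ij,n})$, which then solves the FBSDE system. I expect the main obstacle to be purely organizational: carefully tracking all the quadratic terms through the substitution so that the cross terms $-\rho\gamma^i\lambda^n_{ij}\Zc^{ij,n}\cdot P^j_t(\cdots)$ and the pure $P^j_t(\cdots)$-squared terms land exactly in the form displayed in \eqref{eq:noncom-n-bsde}; there is no analytic difficulty since the absence of common noise removes the need for the auxiliary fixed-point map of \Cref{lemma:g}.
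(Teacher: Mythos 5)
Your overall route is the same as the paper's: the substitution \eqref{eqn:noncom-n:relation} absorbs the coupling terminal condition $-\rho\gamma^i\overline{\hat X}^i_T$ into the forward processes, the new martingale integrands are read off from $\d\hat X^{j,n}$ via $(\sigma^j_t)^\top\opt^j_t=P^j_t\big(\tfrac{1}{1-\gamma^j}(Z^{jj,n}_t+\theta^j_t)\big)$, and the generator of \eqref{eq:noncom-n-bsde} is obtained by rewriting $\tfrac12\sum_j|Z^{ij,n}_t|^2$ in terms of $\Zc^{ij,n}_t$ and adding the drift of $\rho\gamma^i\sum_{j\neq i}\lambda^n_{ij}\hat X^{j,n}_t$; this is exactly the finite-agent, no-common-noise analogue of \Cref{lemma:correspondence} (the argument of \cite[Proposition 3.5]{a:fu2023mean}), and your observation that $\lambda_{ii}=0$ makes $\Zc^{ii,n}=Z^{ii,n}$, so no fixed-point map of the type in \Cref{lemma:g} is needed, is the correct simplification. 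The algebraic matching you describe is right.

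There is, however, a genuine flaw in your regularity bookkeeping. You claim $\hat X^{j,n}\in\S^\infty$ because it is ``bounded drift plus BMO martingale on $[0,T]$'', and deduce $\Yc^{i,n}\in\S^\infty$ from $Y^{i,n}\in\S^\infty$. Both assertions are false: a BMO martingale is in general unbounded (Brownian motion on $[0,T]$ is already a counterexample), the drift $\int_0^t\big((\theta^j_s)^\top g^j_s-\tfrac12|g^j_s|^2\big)\d s$ with $g^j\in\H_{\mathrm{BMO}}$ is not pathwise bounded either, and in fact $Y^{i,n}$ cannot lie in $\S^\infty$ since its terminal value $-\rho\gamma^i\overline{\hat X}^i_T$ is unbounded. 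The statement $\Yc^{i,n}\in\S^\infty$ must instead be derived directly from the BSDE \eqref{eq:noncom-n-bsde}: since $\Yc^{i,n}_T=0$, taking conditional expectations gives $\Yc^{i,n}_t=\E\big[\int_t^T(\text{generator})\,\d s\,\big|\,\Fc^n_t\big]$, and every term of the generator is quadratic or linear in processes with finite $\H_{\mathrm{BMO}}(\R^d,\F^n)$ norms (using \Cref{cd:P-n}, the $1$-Lipschitz projection and boundedness of $\theta$), so the conditional time-integrals are bounded uniformly in $(t,\omega)$ by the BMO norms, exactly as in \Cref{lemma:f-g}. With this replacement the forward direction closes; in the converse direction the inversion is fine, but you should not expect $Y^{i,n}\in\S^\infty$ there either, only $Z^{ij,n}\in\H_{\mathrm{BMO}}$ as required by \Cref{thm:n-FBSDE}.
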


\begin{lemma}\label{lemma:noncom-graphon-correspondence}
There is a one-to-one correspondence between a solution to the graphon FBSDE  and a solution $(\Yc,\Zc)\in S^\infty(\R,\F,I) \times \H_{\mathrm{BMO}}(\R^{d},\F,I)$ to the following graphon BSDEs:
{\small\begin{equation}\label{eq:noncom-graphon-bsde}
\left\{
\begin{aligned}
-\d \Yc_t^u=&\left(\frac{1}{2}|\Zc_t^u|^2+ \frac{\gamma^u}{2(1-\gamma^u)}| \Zc_t^u+\theta^u_t|^2 -\frac{\gamma^u(1-\gamma^u)}{2}\left|(I-P_t^u)\left(\frac{1}{1-\gamma^u}( \Zc_t^u+ \theta_t^u)\right)\right|^2 \right. \\
&\left. -\rho \gamma^u
\E \left[\int_I \left( (\theta_t^v)^{\top} 
P_t^v\left(\frac{1}{1-\gamma^v}(\Zc_t^{v}+\theta_t^v)\right) -\frac{1}{2}
\left|
P_t^v\left(\frac{1}{1-\gamma^v}(\Zc_t^{v}+\theta_t^v)\right)\right|^2\right) G(u, v)\d v\right] \right) \d t\\
&-\Zc_t^u \cdot \d W_t^u,\\
\Yc_T^u=&0.
\end{aligned}\right.
\end{equation}}
The relationship is given, for each $t\in[0,T]$, by
\begin{equation}\label{eqn:noncom-graphon:relation}
\left\{
\begin{aligned}
\Yc_t^u&=Y_t^u+\rho\gamma^u 
\E \left[\int_I\hat{X}_t^v G(u, v)\d v\right],\\
\Zc_t^u&=Z_t^u.
\end{aligned}\right.
\end{equation}
\end{lemma}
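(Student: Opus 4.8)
The plan is to obtain the claimed bijection from a single Itô computation, in exact parallel with \Cref{lemma:correspondence} (whose proof mirrors \cite[Proposition~3.5]{a:fu2023mean}), the key simplification being that, in the absence of common noise, $\Fc_T^*$ is $\P$-trivial, so every term of the form $\E[\,\cdot\mid\Fc_T^*]$ reduces to a deterministic expectation; in particular $t\mapsto\int_I\E[\hat{X}_t^v]\,G(u,v)\,\d v$ will be a deterministic function of $(t,u)$.

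For the forward implication I would start from a solution $(\hat{X}^u,Y^u,Z^u)_{u\in I}$ of the no-common-noise graphon FBSDE (i.e.\ \eqref{eqn:graphon-FBSDE} with $\sigma^{*u}\equiv 0$, $Z^{*u}\equiv 0$ and $W^*$ deleted) and first record that, since $\Sigma^u=\sigma^u$ is now a square matrix with $\sigma^u(\sigma^u)^\top$ uniformly elliptic, $(\sigma_t^u)^\top\big(\sigma_t^u(\sigma_t^u)^\top\big)^{-1}\sigma_t^u=I_d$; hence \eqref{eqn:graphon-optimal} gives $(\sigma_t^u)^\top\widetilde{\pi}_t^u=P_t^u\big(\tfrac{1}{1-\gamma^u}(Z_t^u+\theta_t^u)\big)=:p_t^u$, so the drift of $\hat{X}^u$ equals $(\theta_t^u)^\top p_t^u-\tfrac12|p_t^u|^2$ and its martingale part $\int_0^\cdot(\widetilde{\pi}_s^u)^\top\sigma_s^u\,\d W_s^u$ is a BMO martingale (bounded $\sigma$, $\widetilde{\pi}^u\in\H_{\mathrm{BMO}}$), hence a true martingale. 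Using \Cref{cd:P-n} to bound $|p_t^u|\le C(1+|Z_t^u|)$ and the uniform BMO estimate on $Z$, the drift of $\hat{X}^u$ lies in $L^1([0,T]\times\Omega)$ uniformly in $u$, so $t\mapsto\E[\hat{X}_t^u]$ is absolutely continuous with $\tfrac{\d}{\d t}\E[\hat{X}_t^u]=\E\big[(\theta_t^u)^\top p_t^u-\tfrac12|p_t^u|^2\big]$ and, by the Fubini property of the rich extension (\Cref{rmk: fubini property}), so is $t\mapsto\int_I\E[\hat{X}_t^v]G(u,v)\,\d v$. Then I set $\Zc^u:=Z^u$ and $\Yc_t^u:=Y_t^u+\rho\gamma^u\int_I\E[\hat{X}_t^v]G(u,v)\,\d v$; differentiating and inserting the dynamics of $Y^u$ together with the derivative just computed produces exactly \eqref{eq:noncom-graphon-bsde}, while the terminal condition $Y_T^u=-\rho\gamma^u\int_I\E[\hat{X}_T^v]G(u,v)\,\d v$ turns into $\Yc_T^u=0$; joint $\Bc([0,T])\otimes\Ic\boxtimes\Fc$-measurability and $\Yc\in S^\infty(\R,\F,I)$ are inherited because the added term is deterministic and bounded uniformly in $(t,u)$.

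For the converse, given $(\Yc,\Zc)\in S^\infty(\R,\F,I)\times\H_{\mathrm{BMO}}(\R^d,\F,I)$ solving \eqref{eq:noncom-graphon-bsde}, I would put $Z^u:=\Zc^u$, define $\widetilde{\pi}^u$ through the no-common-noise form of \eqref{eqn:graphon-optimal}, let $\hat{X}^u$ be the process given by the (explicit) forward equation in \eqref{eqn:graphon-FBSDE} — its right-hand side does not involve $\hat{X}^u$, so $\hat{X}^u$ is a plain Itô integral, jointly measurable in $(t,u,\omega)$ and with moments of every order by the BMO bound on $\widetilde{\pi}^u$; joint measurability can be argued as in \cite[Propositions~6.1,~6.2]{a:tangpi2024optimal} — and set $Y_t^u:=\Yc_t^u-\rho\gamma^u\int_I\E[\hat{X}_t^v]G(u,v)\,\d v$; running the previous computation in reverse shows $(\hat{X}^u,Y^u,Z^u)_{u\in I}$ solves the graphon FBSDE, the two maps are visibly mutual inverses, and the relation \eqref{eqn:noncom-graphon:relation} is read off from the definitions. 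The one genuinely non-routine point, which I expect to be the main obstacle, is justifying that $t\mapsto\int_I\E[\hat{X}_t^v]G(u,v)\,\d v$ is absolutely continuous with the asserted derivative — i.e.\ the interchange of $\tfrac{\d}{\d t}$, $\E[\cdot]$ and $\int_I\cdot\,G(u,v)\,\d v$, and the vanishing of the martingale part of each $\hat{X}^v$ — which rests on the uniform-in-$v$ BMO control of $\widetilde{\pi}^v$, \Cref{cd:P-n}, boundedness of the coefficients, and the Fubini property of the rich Fubini extension.
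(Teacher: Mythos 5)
Your proposal is correct and follows essentially the same route as the paper, which proves this lemma only by analogy with \Cref{lemma:correspondence} (itself deferred to \cite[Proposition 3.5]{a:fu2023mean}): the change of variables $\Yc^u_t=Y^u_t+\rho\gamma^u\E\big[\int_I\hat X^v_t G(u,v)\,\d v\big]$, $\Zc^u=Z^u$, with the conditional expectation degenerating to a deterministic finite-variation term in the absence of common noise, and the interchange of derivative, expectation and $\int_I\cdot\,G(u,v)\,\d v$ justified exactly as you indicate via the BMO bound on $\opt^u$ and the Fubini property. The only slip is citing \Cref{cd:P-n} where the graphon-side \Cref{cd:P} is the relevant condition, which is immaterial.
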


In the absence of common noise, we can obtain a result similar to \Cref{thm:R-exist-2}, namely, the existence of a solution to \eqref{eq:noncom-graphon-bsde}.
Therefore, to establish the existence of a solution to \eqref{eq:noncom-n-bsde}, we only need to consider the difference between the two solutions.
Define $\Delta \Yc_t^{i,n} = \Yc_t^{i,n}-\Yc_t^{\frac{i}{n}}$ and $\Delta \Zc_t^{i j,n} = \Zc_t^{i j,n}-\delta_{i j} \Zc_t^{\frac{i}{n}}$.
For simplicity, we introduce the following notation:
\begin{align*}
g^{j,n}(t,\Zc^{j j,n}) &:= P_t^j \left(\frac{1}{1-\gamma^j}(\Zc_t^{j j,n}+ \theta_t^j)\right),\quad j=1,\cdots, n,\\
g^u(t,\Zc^u) &:= P_t^u \left(\frac{1}{1-\gamma^u}(\Zc_t^u+ \theta_t^u)\right), \quad u\in I.   
\end{align*}
To ease the presentation, we use $g_{t}^{j,n}$ to represent $g^{j,n}(t,\Zc^{j j,n})$ and $g^u_t$ to represent $g^u(t,\Zc^u)$ when there is no risk of confusion. Similarly, until we prove the convergence results, we will simply write $\Delta \Zc^{i j,n}$ as $\Delta \Zc^{i j}$ for notational simplicity, where it does not cause ambiguity.

According to \eqref{eq:noncom-n-bsde} and \eqref{eq:noncom-graphon-bsde}, $\Delta \Yc^{i,n}$ satisfies the following BSDEs:
\begin{align}
\begin{cases}
\label{eqn:delta-BSDE}
     \d \Delta \Yc_t^{i,n}&=F\left((\Zc_t^u)_{u\in I}, (\Delta \Zc_t^{i i})_{1\leq i\leq n} ,(\Delta \Zc_t^{i j})_{i\neq j}\right)\d t +\sumall \Delta \Zc_t^{i j} \d W_t^j,\\
     \Delta \Yc_T^{i,n}&=0, \quad \quad i=1,2, \cdots, n,
\end{cases}
\end{align}
where
{\small\begin{equation}
\begin{aligned}
\label{eqn:delta-BSDE-F}
&F\left((\Zc_t^u)_{u\in I}, (\Delta \Zc_t^{i i})_{1\leq i\leq n} ,(\Delta \Zc_t^{i j})_{i\neq j}\right)\\
=&\left(\frac{1}{2}|\Zc_t^{\frac{i}{n}}|^2-\frac{1}{2}|\Zc_t^{i i}|^2\right)+\gamma^\frac{i}{n} \left((\Zc_t^{\frac{i}{n}}+\theta_t^{\frac{i}{n}})^T g_t^{\frac{i}{n}} - (\Zc_t^{i i,n}+\theta_t^i)^T g_t^{i,n} \right) -\frac{\gamma^{\frac{i}{n}}(1-\gamma^{\frac{i}{n}})}{2} (|g_t^{\frac{i}{n}}|^2-|g_t^{i,n}|^2)\\
& -\!\rho \gamma^{\frac{i}{n}}\left(\!\E \left[ \int_I (\theta^{v}_{t})^{\top} g_t^v G({\frac{i}{n}},v) \d v \right] 
\!\!-\!\sumj \lambda_{i j}^n(\theta^{j}_{t})^{\top}g_t^{j,n}\!\right)\!\!+\!\frac{1}{2} \rho \gamma^\frac{i}{n} \left(\!\E\left[ \int_I |g_t^v|^2 G(\frac{i}{n},v) \d v \right] \!\!- \!\sumj \lambda_{i j}^n |g_t^{j,n}|^2  \!\right)\\
&-\frac{1}{2} \sumj |\Zc_t^{i j}|^2 + \rho \gamma^\frac{i}{n} \sumj \lambda_{i j}^n (\Zc_t^{i j})^{\top} g_t^{j,n} -\frac{1}{2} \rho^2 (\gamma^\frac{i}{n})^2 \sumj (\lambda_{i j}^n)^2|g_t^{j,n}|^2 .
\end{aligned}
\end{equation}}

Based on the above results, we can establish the wellposedness of BSDEs \eqref{eq:noncom-n-bsde}.
\begin{theorem}\label{thm:n-bsde.existence}
There exist positive constants $\rho^{*}$ and  $T^*$ such that for all $0\leq\rho\leq\rho^{*}$ and $0< \widetilde{T} \leq T^*$, the BSDEs \eqref{eq:noncom-n-bsde} admits a  
solution $(\Yc^{i,n},(\Zc^{i j,n})_{1\leq j\leq n})_{1\leq i\leq n}\in (\mathbb{S}^\infty(\R^{d},\F^n) \times (\H_{\mathrm{BMO}}(\R^{d},\F^n))^n)^n$.
\end{theorem}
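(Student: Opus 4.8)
The plan is to obtain the solution of \eqref{eq:noncom-n-bsde} by perturbing off a benchmark graphon solution, in the spirit of the argument behind \Cref{thm:R-exist}, but now for the finite system. First I would introduce the auxiliary step graphon $G_n(u,v):=\lambda_{\lceil nu\rceil\,\lceil nv\rceil}$ together with the piecewise-constant data $\theta^u:=\theta^{\lceil nu\rceil}$, $\gamma^u:=\gamma^{\lceil nu\rceil}$, $x^u:=x^{\lceil nu\rceil}$, $A^u:=A_{\lceil nu\rceil}$; these inherit \Cref{assm:graphon}, \Cref{assm:gamma-graphon} and \Cref{cd:P} from the $n$-agent hypotheses (in particular from \Cref{cd:P-n}). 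By the no-common-noise analogue of \Cref{thm:R-exist-2} applied to \eqref{eq:noncom-graphon-bsde}, there is a constant $R^*$ and, for each $0<R\le R^*$, constants $T(R),\rho(R)$ such that for $\rho\le\rho(R)$, $\widetilde T\le T(R)$ this graphon BSDE has a solution $(\Yc^u,\Zc^u)_{u\in I}\in\mathbb{S}^\infty(\R,\F,I)\times\H_{\mathrm{BMO}}(\R^d,\F,I)$ with $\sup_u\|\Zc^u\|_{\H_{\mathrm{BMO}}}\le R$; since all these estimates only involve $\|\theta\|$, $\tilde\gamma$, $\overline\gamma$, $\underline\gamma$, $C_0$, $\widetilde T$ and $\|G_n\|_\infty\le1$, they are \emph{uniform in $n$}. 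Writing $\Zc^{\frac{i}{n}}$ for the evaluation at $u=i/n$ and invoking \Cref{lemma:noncom-n-correspondence} and \Cref{lemma:noncom-graphon-correspondence}, it suffices to solve the difference system \eqref{eqn:delta-BSDE}--\eqref{eqn:delta-BSDE-F} for $(\Delta\Yc^{i,n},(\Delta\Zc^{ij})_{1\le j\le n})_{1\le i\le n}$, since then $(\Yc^{i,n},(\Zc^{ij,n})_j):=(\Delta\Yc^{i,n}+\Yc^{\frac{i}{n}},(\Delta\Zc^{ij}+\delta_{ij}\Zc^{\frac{i}{n}})_j)$ solves \eqref{eq:noncom-n-bsde}.

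\textbf{The contraction map.} I would solve \eqref{eqn:delta-BSDE} by a fixed point in the $R$-ball of $\big(\H_{\mathrm{BMO}}(\R^{d},\F^n)\big)^{n\times n}$ (equivalently of the $\kappa$-weighted norm), defining $\Psi$ by: freeze a family $(\widehat{\Delta\Zc}^{ij})$ in the ball, substitute it into the slots of $F$ in \eqref{eqn:delta-BSDE-F} where $\Delta\Zc$ enters through the projections $g^{j,n}$ (that is, through $\Zc^{jj,n}=\Zc^{\frac{j}{n}}+\widehat{\Delta\Zc}^{jj}$) and through the lower-order $\rho$-terms, keep the ``active'' linear/quadratic dependence on $\Delta\Zc$, and solve the resulting BSDE. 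The key point is that once the family is frozen the system \emph{decouples across $i$} into $n$ scalar quadratic BSDEs driven by the $dn$-dimensional noise $(W^1,\dots,W^n)$, each with zero terminal value; the generator of the $i$-th one splits into (i) a genuinely quadratic part $-\tfrac12\sum_{j\ne i}|\Delta\Zc^{ij}|^2-\tfrac12|\Delta\Zc^{ii}|^2$ plus further quadratic/cross terms in $\Delta\Zc^{ii}$, all with coefficients bounded by constants of the type $c_1=\tfrac12+\tfrac{5\tilde\gamma}{2(1-\overline\gamma)}$; (ii) a term linear in $\Delta\Zc^{ii}$ with $\H_{\mathrm{BMO}}$-coefficient of the form $C(\cdot)(1+|\Zc^{\frac{i}{n}}_t|)$, which I would absorb by a Girsanov change of measure $\P^{i,o}$ exactly as in \eqref{eqn:graphon:difference-new}, the resulting norm equivalence being $n$-uniform thanks to the $n$-uniform bound on $\|\Zc^{\frac{i}{n}}\|_{\H_{\mathrm{BMO}}}$; and (iii) a source $F(\cdot;(\Zc^u),0,0)$ together with all the $\rho$-prefactored terms, of $\H_{\mathrm{BMO}}$-size $O(\rho)+O(1/n)$, the $O(1/n)$ stemming only from the mismatch between the normalisations $\tfrac1{n-1}$ and $\tfrac1n$ and from the $\rho^2(\lambda_{ij}^n)^2$ term, while the interaction sums are controlled using $\sum_j\lambda_{ij}^n\le1$. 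Existence and the a priori $\H_{\mathrm{BMO}}$-bound for each frozen scalar BSDE then follow from the exponential-transform argument already used in \Cref{thm:R-exist} (via the auxiliary function $\mathcal{T}(y)$), \cite[Proposition 3, Theorem 2.2]{a:briand2008quadratic}, \cite[Theorem 2.2, 2.3]{a:kazamaki2006continuous} and \Cref{lemma:f-g}; these bounds show that for $\rho$ small relative to $R$ and $\widetilde T$ small, $\Psi$ maps the $R$-ball into itself.

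\textbf{Contraction estimate and conclusion.} For two frozen families (labelled $1,2$) I would apply It\^o to $\e^{\kappa t}|\Delta\Yc^{i,n,1}_t-\Delta\Yc^{i,n,2}_t|^2$, split the linear and cross terms with Young's inequality (choosing $\epsilon$ to swallow half of $\int\e^{\kappa s}|\Delta\Zc^{ij,1}-\Delta\Zc^{ij,2}|^2\d s$ and then $\kappa>1/\epsilon$), take conditional expectations, absorb the $\mathbb{S}^\infty$-norm of $\Delta\Yc^{i,n,1}-\Delta\Yc^{i,n,2}$ and the $R^2$-weighted $\H_{\mathrm{BMO},\kappa}$-terms into the left-hand side (this is where $R\le R^*$ small is used), and finally take the supremum over $\F^n$-stopping times and over $i$, arriving at
\[
\sum_{i,j=1}^{n}\|\Delta\Zc^{ij,1}-\Delta\Zc^{ij,2}\|^2_{\H_{\mathrm{BMO},\kappa}(\R^{d},\F^n)}\ \le\ \rho^2\,C^{(3)}\sum_{i,j=1}^{n}\|\widehat{\Delta\Zc}^{ij,1}-\widehat{\Delta\Zc}^{ij,2}\|^2_{\H_{\mathrm{BMO},\kappa}(\R^{d},\F^n)},
\]
with $C^{(3)}$ depending on $\tilde\gamma,\overline\gamma,\underline\gamma,\|\theta\|,C_0,\widetilde T,R$ but \emph{not on $n$}. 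Choosing $R\le R^*$ small (for the self-map property and the absorptions), then $\widetilde T\le T^*$ and $\rho\le\rho^*$ small --- also so that the benchmark graphon step is available and $\rho^2C^{(3)}\le\tfrac12$ --- makes $\Psi$ a contraction of the $R$-ball, and its fixed point yields a solution of \eqref{eqn:delta-BSDE}, hence of \eqref{eq:noncom-n-bsde} in $\big(\mathbb{S}^\infty(\R^{d},\F^n)\times(\H_{\mathrm{BMO}}(\R^{d},\F^n))^n\big)^n$.

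\textbf{Main obstacle.} The heart of the matter is the multidimensionality of \eqref{eq:noncom-n-bsde}: it is a genuinely $n$-dimensional quadratic BSDE, for which only local (small-$\widetilde T$) well-posedness can be expected (cf.\ \cite{a:frei2014splitting}), so the estimates must be carried out componentwise after the change of measure, and one must verify that the off-diagonal quadratic terms $|\Delta\Zc^{ij}|^2$ do not destroy the a priori bound --- it is exactly this that confines $\Delta\Zc$ to a small ball and forces both $\rho$ and $\widetilde T$ to be small. The second delicate point is to keep every constant independent of $n$, which rests on the mean-field cancellation $\sum_j\lambda_{ij}^n\le1$ in the interaction sums and on the $n$-uniform $\H_{\mathrm{BMO}}$-bound for the benchmarks $(\Zc^{\frac{i}{n}})_i$; without these the contraction radius would collapse as $n\to\infty$.
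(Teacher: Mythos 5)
Your overall architecture is the same as the paper's: take the no-common-noise analogue of \Cref{thm:R-exist-2} for the graphon BSDE \eqref{eq:noncom-graphon-bsde} as an $n$-uniform benchmark, pass to the difference system \eqref{eqn:delta-BSDE}--\eqref{eqn:delta-BSDE-F} via \Cref{lemma:noncom-n-correspondence} and \Cref{lemma:noncom-graphon-correspondence}, and close with a Banach fixed point whose constants are kept independent of $n$ through $\sum_j\lambda_{ij}^n\le 1$ and the uniform BMO bound on the benchmark. The execution differs in the inner step: the paper freezes the \emph{entire} generator $F$ (so each iterate solves a BSDE with exogenous generator, handled by \cite[Proposition A.3]{a:fu2020mean} rather than by Girsanov plus the exponential transform), and it runs the contraction on the much smaller ball of radius $\max_i A^{i,n}=O(\sqrt{\rho})$, obtaining the self-map bound \eqref{eqn:convergence-result} and a contraction constant $C_2=C(\tilde\gamma,\overline\gamma)R^2+C(\cdot)(\widetilde T+\rho)$, with the contraction closed after averaging over $i$ because of the cross term $\frac{1}{n-1}\sum_j\|\hat z^{jj}-\hat z^{jj\prime}\|^2_{\mathrm{BMO}}$. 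Your variant (partial freezing, quadratic terms kept active, scalar quadratic BSDEs solved as in \Cref{thm:R-exist}) is workable, and using the step graphon $G_n$ as benchmark instead of $G$ is immaterial for existence.

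One piece of bookkeeping in your proposal is not right as stated, although it is not fatal. Under your freezing convention (all projections $g^{j,n}$, including $j=i$, evaluated at the frozen $\widehat{\Delta\Zc}^{jj}$), the generator contains own-agent coupling terms such as $-\gamma^{\frac{i}{n}}(\Zc^{\frac{i}{n}}_t+\theta^{\frac{i}{n}}_t)^{\top}\Delta g^{i,n}_t$ and $\gamma^{\frac{i}{n}}(1-\gamma^{\frac{i}{n}})\big((g^{\frac{i}{n}}_t)^{\top}\Delta g^{i,n}_t+\tfrac12|\Delta g^{i,n}_t|^2\big)$ which carry \emph{no} factor of $\rho$. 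Consequently the frozen source is not of BMO-size $O(\rho)+O(1/n)$ (it also contains terms of size comparable to the input ball radius times the benchmark norm), and the map $\Psi$ cannot have Lipschitz constant $\rho^2C^{(3)}$ as in the graphon case \eqref{eqn:contraction_inequality}; the correct constant is of the form $C(\tilde\gamma,\overline\gamma,\|\theta\|,C_0,R)\,(R^2+\widetilde T+\rho)$, exactly the paper's $C_2$. Your conclusion survives because you in any case choose $R$ and $\widetilde T$ small (as the paper does), but the displayed contraction inequality should be corrected accordingly; alternatively, keeping the own projection $g^{i,n}$ active and freezing only the $j\neq i$ couplings and the $\rho$-prefixed terms would restore a genuinely $O(\rho)$-sized coupling and make your displayed estimate accurate.
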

\begin{proof}
Similar to \Cref{thm:R-exist-2}, we assert that there exists a positive constant $R^*$ such that for each fixed $0<R \leq R^*$, there exist positive constants $T(R) \leq T$ and $\rho(R) \leq \rho_1$ such that for all $0 \leq \rho \leq \rho(R)$ and $0 \leq \widetilde{T} \leq T(R)$, there exists a unique $(\Yc,\Zc)\in S^\infty(\R^{d},\F,I) \times \H_{\mathrm{BMO}}(\R^{d},\F,I)$ with $\Zc $ located in the $R$-ball of $\H_{\mathrm{BMO}}(\R^{d},\F,I)$
satisfying \eqref{eq:noncom-graphon-bsde}.
Here we will fix an $R$ to be determined.

Therefore, to establish the wellposedness of BSDEs \eqref{eq:noncom-n-bsde}, it suffices to prove the wellposedness of BSDEs \eqref{eqn:delta-BSDE}. The proof follows a similar idea as in \Cref{thm:R-exist} by utilizing the contraction mapping theorem. For the sake of notational simplicity, in the following proof, we will use $\|\cdot\|_{\mathrm{BMO}}$ instead of $\|\cdot\|_{\H_{\mathrm{BMO}}(\R^{d},\F^n)}$ and $\|\cdot\|_{\mathrm{BMO}, I}$ instead of $\|\cdot\|_{\H_{\mathrm{BMO}}(\R^{d},\F, I)}$ where it does not cause ambiguity.

Step 1: Construction and analysis of the mapping $\Psi$.

Define
\begin{align*}
    \Gamma_t^{i,n (1)}&=\sumj \lambda_{i j}^n(\theta^{j}_{t})^{\top}g_t^\frac{j}{n}-\E \left[ \int_I (\theta_t^v)^{\top} g_t^v G(\frac{i}{n},v) \d v\right],\\
    \Gamma_t^{i,n (2)}&=\sumj \lambda_{i j}^n|g_t^\frac{j}{n}|^2-\E \left[ \int_I |g_t^v|^2 G(\frac{i}{n},v) \d v\right],
\end{align*}
and 
{\small\begin{align*}
A^{i,n}=\sqrt{2\rho \tilde{\gamma} \left\|\sqrt{|\Gamma^{i,n(1)}|}\right\|_{\mathrm{BMO},I}^2+\rho \tilde{\gamma} \left\|\sqrt{|\Gamma^{i,n(2)}|}\right\|_{\mathrm{BMO},I}^2
+\frac{\rho^2 (\tilde{\gamma})^2}{(n-1)^2} \left(\frac{1}{1-\overline{\gamma}}R+\frac{1}{1-\overline{\gamma}}\sqrt{\widetilde{T}}\|\theta\|+\sqrt{\widetilde{T}}C_0 \right)^2}.
\end{align*}}
Based on the linear growth property of $g^u$, there exists a positive constant $C(\tilde{\gamma}, \overline{\gamma},\|\theta\|,C_0,R)$ such that
\begin{align}
\label{eqn:max A}
    \max_{i}A^{i,n}\leq C(\tilde{\gamma}, \overline{\gamma},\|\theta\|,C_0,R) \sqrt{\rho}, \quad \forall n\geq 1.
\end{align}

We begin by restricting the domain of $\Psi$ to $(\hat{z}^{i i},(\hat{z}^{i j})_{j \neq i})_{1\leq i\leq n} \in (\H_{\mathrm{BMO}}(\R^{d},\F^n))^{n\times n}$ satisfying  
\begin{align}
\label{eqn:hat-z}
    \|\hat{z}^{i i}\|_{\mathrm{BMO}}^2 + \sumj \|\hat{z}^{i j}\|_{\mathrm{BMO}}^2  \leq (\max_i A^{i,n})^2, \quad \forall 1\leq i \leq n
\end{align}
and consider the corresponding BSDEs:
\begin{align}
\label{eqn:delta-BSDE-fix}
\begin{cases}
    \d \Delta \Yc_t^{i,n}
    &=F\left((\Zc_t^u)_{u\in I}, (\hat{z}_t^{i i})_{1\leq i\leq n} ,(\hat{z}_t^{i j})_{i\neq j}\right)\d t
    +\sumall \Delta \Zc_t^{i j} \d W_t^j,\\
    \Delta \Yc_{\widetilde{T}}^{i,n}&=0, \quad \quad    i=1,2, \cdots,n,
\end{cases}
\end{align}
which differs from BSDEs \eqref{eqn:delta-BSDE} in that the variables in the generator $F$ are fixed.

By \cite[Proposition A.3]{a:fu2020mean}, there exists a unique solution 
\begin{equation*}
    \left(\Delta \Yc^{i,n},\Delta \Zc^{i i},(\Delta \Zc^{i j})_{j \neq i}\right) \in\mathbb{S}^\infty(\R^{d},\F^n) \times (\H_{\mathrm{BMO}}(\R^{d},\F^n))^n
\end{equation*}
for each $i=1, \cdots, n$ to the BSDEs \eqref{eqn:delta-BSDE-fix}.
We thus define
\begin{equation*}
\Phi((\hat{z}^{i i},(\hat{z}^{i j})_{j \neq i})_{1\leq i\leq n}):=(\Delta \Zc^{i i},(\Delta \Zc^{i j})_{j \neq i})_{1\leq i\leq n}.   
\end{equation*}

In the following, we will prove 
\begin{align}
\label{eqn:delta-z}
    \|\Delta \Zc_t^{i i}\|_{\mathrm{BMO}}^2 + \sumj \|\Delta \Zc_t^{i j}\|_{\mathrm{BMO}}^2
    \leq (\max_i A^{i,n})^2,
\end{align}
which is crucial for the subsequent application of the contraction mapping theorem.

For notational simplicity, we denote
$\Delta g^{j,n}_t = g^{j,n}(t,\mathcal{Z}^{\frac{j}{n}}+\hat{z}^{j j})-g_t^{\frac{j}{n}}(t,\mathcal{Z}^{\frac{j}{n}}).$
Notice that 
\begin{equation*}
\begin{split}
&F\left((\Zc_t^u)_{u\in I}, (\hat{z}_t^{i i})_{1\leq i\leq n} ,(\hat{z}_t^{i j})_{i\neq j}\right)\\
=&\left(-(\Zc_t^\frac{i}{n})^{\top}\hat{z}_t^{i i} - \frac{1}{2}|\hat{z}_t^{i i}|^2 \right)-\gamma^\frac{i}{n} \left((\Zc_t^\frac{i}{n}+\theta_t^\frac{i}{n})^{\top} \Delta g_t^{i,n} +(\hat{z}_t^{i i})^{\top}(g_t^\frac{i}{n}+\Delta g_t^{i,n}) \right) \\
&+\frac{\gamma^\frac{i}{n}(1-\gamma^\frac{i}{n})}{2}\left(2(g_t^\frac{i}{n})^{\top}\Delta g_t^{i,n} +|\Delta g_t^{i,n}|^2 \right) +\rho \gamma^\frac{i}{n} \left(\Gamma_t^{i,n (1)} +\sumj \lambda_{i j}^n (\theta^{j}_{t})^{\top} \Delta g_t^{j,n} \right)\\
&-\frac{1}{2}\rho \gamma^\frac{i}{n} \left(\Gamma_t^{i,n (2)}+\sumj \lambda_{i j}^n\left(2(g_t^{\frac{j}{n}})^{\top}\Delta g_t^{j,n} + |\Delta g_t^{j,n}|^2 \right) \right)-\frac{1}{2}\sumj |\hat{z}_t^{i j}|^2\\
& +\rho \gamma^\frac{i}{n} \sumj \lambda_{i j}^n (\hat{z}_t^{i j})^{\top} (g_t^\frac{j}{n}+\Delta g_t^{j,n}) -\frac{1}{2}\rho^2 (\gamma^\frac{i}{n})^2 \sumj (\lambda_{i j}^n)^2|g_t^\frac{j}{n}+\Delta g_t^{j,n}|^2.
\end{split}
\end{equation*}
Applying It\^o's formula to $(\Delta \Yc_\tau^{i,n})^2$, by \cref{lemma:f-g}
and Young's inequality, we have, for each $\tau \in \mathcal{T}$,
{\small\begin{equation*}
\begin{split}
    &(\Delta \Yc_\tau^{i,n})^2+\E \big[  \int_{\tau}^{\widetilde{T}} \sumall |\Delta \Zc_t^{i j}|^2 \d t \big| \Fc_\tau^n \big]\\
\leq & \E \left[ \int_\tau^{\widetilde{T}} |\Delta \Yc_t^{i,n}| \cdot
    \left\{ 2|\Zc_t^\frac{i}{n}\|\hat{z}_t^{i i}|
    +|\hat{z}_t^{i i}|^2
    +2\gamma^\frac{i}{n}|\Zc_t^\frac{i}{n}\|\Delta g_t^{i,n}| 
    +2\gamma^\frac{i}{n} |\theta_t^\frac{i}{n}\|\Delta g_t^{i,n}|
    +2\gamma^\frac{i}{n} |\hat{z}_t^{i i}\|g_t^\frac{i}{n}|
    +2\gamma^\frac{i}{n} |\hat{z}_t^{i i}\|\Delta g_t^{i,n}|\right. \right.\\
    & \quad \quad \quad \quad \ 
    +2\gamma^\frac{i}{n}(1-\gamma^\frac{i}{n})|g_t^\frac{i}{n}\|\Delta g_t^{i,n}|
    +\gamma^\frac{i}{n}(1-\gamma^\frac{i}{n})|\Delta g_t^{i,n}|^2 
    +2\rho \gamma^\frac{i}{n} |\Gamma_t^{i,n(1)}|
    +\rho \gamma^\frac{i}{n} |\Gamma_t^{i,n(2)}|
    +\sumj |\hat{z}_t^{i j}|^2\\
    &  \quad \quad \quad \quad \ 
    +\rho \gamma^\frac{i}{n} \sumj \lambda_{i j}^n \left(2|\theta_t^\frac{j}{n}\|\Delta g_t^{j,n}|+2|g_t^\frac{j}{n}\|\Delta g_t^{j,n}|+|\Delta g_t^{j,n}|^2 + 2|\hat{z}_t^{i j}\| g_t^\frac{j}{n}| +2|\hat{z}_t^{i j}\|\Delta g_t^{j,n}|\right)\\
    & \left. \left. \left. \quad \quad \quad \quad \ 
    +\rho^2 (\gamma^\frac{i}{n})^2 \sumj (\lambda_{i j}^n)^2 \left(
    |g_t^\frac{j}{n}|^2+2|g_t^\frac{j}{n}\|\Delta g_t^{j,n}| +|\Delta g_t^{j,n}|^2 \right) \right\} \right\rvert \Fc_\tau^n \right]\\
\leq & 
\left\{ \left( \left(2+\frac{6\tilde{\gamma}}{1-\overline{\gamma}}\right)\|\Zc\|_{\mathrm{BMO},I} + \frac{6\tilde{\gamma}}{1-\overline{\gamma}}\sqrt{\widetilde{T}}\|\theta\| +4\tilde{\gamma}\sqrt{\widetilde{T}}C_0 \right) \|\hat{z}^{i i}\|_{\mathrm{BMO}} +\left(1+\frac{3\tilde{\gamma}}{1-\overline{\gamma}}\right)\|\hat{z}^{i i}\|^2_{\mathrm{BMO}} 
\right.\\
& \quad +2 \rho \tilde{\gamma} \left\|\sqrt{|\Gamma_t^{i,n(1)}|}\right\|_{\mathrm{BMO},I}^2+ \rho \tilde{\gamma} \left\|\sqrt{|\Gamma_t^{i,n(2)}|}\right\|_{\mathrm{BMO},I}^2 +\sumj \|\hat{z}^{i j}\|_{\mathrm{BMO}}^2\\
& \quad 
+\frac{\rho \tilde{\gamma}}{n-1}  
\left[\frac{2\|\Zc\|_{\mathrm{BMO},I}+(4-2\overline{\gamma})\sqrt{\widetilde{T}}\|\theta\|+2(1-\overline{\gamma})\sqrt{\widetilde{T}}C_0}{(1-\overline{\gamma})^2}
\sumj \|\hat{z}^{j j}\|_{\mathrm{BMO}} +\frac{1}{(1-\overline{\gamma})^2}\sumj \|\hat{z}^{j j}\|_{\mathrm{BMO}} ^2 \right]\\
& \quad 
+\frac{2\rho \tilde{\gamma}}{n-1}
\left[\frac{\|\Zc\|_{\mathrm{BMO},I}+\sqrt{\widetilde{T}}\|\theta\|+(1-\overline{\gamma})\sqrt{\widetilde{T}}C_0}{1-\overline{\gamma}} \sumj \|\hat{z}^{i j}\|_{\mathrm{BMO}}
+\frac{1}{1-\overline{\gamma}}\sumj \|\hat{z}^{i j}\|_{\mathrm{BMO}}\|\hat{z}^{j j}\|_{\mathrm{BMO}}\right]\\
& \left. \quad 
+\frac{\rho^2 (\tilde{\gamma})^2}{(n-1)^2}
\sumj \left[\frac{1}{1-\overline{\gamma}}\|\Zc\|_{\mathrm{BMO},I}+\frac{1}{1-\overline{\gamma}}\|\hat{z}^{j j}\|_{\mathrm{BMO}} +\frac{1}{1-\overline{\gamma}}\sqrt{\widetilde{T}} \|\theta\|+\sqrt{\widetilde{T}}C_0 \right]^2 \right\} \cdot \|\Delta \Yc^{i,n} \|_{\mathbb{S}^\infty(\R^{d},\F^n)}.
\end{split}
\end{equation*}}
Using the estimation from \eqref{eqn:max A} and \eqref{eqn:hat-z}, and taking the supremum over all stopping times $\tau$, we obtain
{\small\begin{align*}
    &\|\Delta \Yc^{i,n} \|_{\mathbb{S}^\infty(\R^{d},\F^n)}^2+\sumall \|\Delta \Zc_t^{i j}\|_{\mathrm{BMO}}^2\\
    \leq &\|\Delta \Yc^{i,n} \|_{\mathbb{S}^\infty(\R^{d},\F^n)} \cdot 
    \left(\left( C(\tilde{\gamma}, \overline{\gamma})R+C(\tilde{\gamma}, \overline{\gamma},\|\theta\|,C_0,R)(\sqrt{\widetilde{T}}+\rho)
    \right)(\max_i A^{i,n}) + C(\tilde{\gamma}, \overline{\gamma},R) (\max_i A^{i,n})^2
    \right)\\
    \leq & \left( C(\tilde{\gamma}, \overline{\gamma})R+C(\tilde{\gamma}, \overline{\gamma},\|\theta\|,C_0,R)(\sqrt{\widetilde{T}}+\sqrt{\rho})
    \right) \|\Delta \Yc^{i,n} \|_{\mathbb{S}^\infty(\R^{d},\F^n)} \cdot 
    (\max_i A^{i,n}).
\end{align*}}
We first choose a sufficiently small $R_1 (\leq R^*)$, and for each $R \leq R_1$, select sufficiently small $\rho_1(R) \leq \rho(R)$ and $T_1(R) \leq T(R)$, such that for any $\rho \leq \rho_1(R)$ and $\widetilde{T} \leq T_1(R)$, the coefficient in the last line of the above equation is less than or equal to 1.
Consequently, for each $0 \leq \rho \leq \rho_1(R)$ and $0 \leq \widetilde{T} \leq T_1(R)$, we have
\begin{align*}
\|\Delta \Yc^{i,n}\|_{\mathbb{S}^\infty(\R^{d}, \F^n)}^2
+\sumall \|\Delta \Zc_t^{i j}\|_{\mathrm{BMO}}^2
&\leq  \|\Delta \Yc^{i,n} \|_{\mathbb{S}^\infty(\R^{d},\F^n)} \cdot (\max_i A^{i,n})\\
&\leq \frac{1}{2}\|\Delta \Yc^{i,n} \|_{\mathbb{S}^\infty(\R^{d}, \F^n)}^2 +(\max_i A^{i,n})^2.
\end{align*}
Thus it holds that
\begin{align}\label{eqn:convergence-result}
    \frac{1}{2}\|\Delta \Yc^{i,n}\|_{\mathbb{S}^\infty(\R^{d},\F^n)}^2
    +\|\Delta \Zc^{ii}\|_{\mathrm{BMO}}^2 
    +\sumj \|\Delta \Zc^{ij}\|_{\mathrm{BMO}}^2\leq (\max_i A^{i,n})^2,
\end{align}
which directly yields \eqref{eqn:delta-z}.

Step 2. Proving $\Psi$ is a contraction.

Now fix $\left\{(\hat{z}^{i i},(\hat{z}^{i j})_{j \neq i})\right\}_{i=1, \cdots, n}$ 
and $\left\{(\hat{z}^{i i'},(\hat{z}^{i j'})_{j \neq i})\right\}_{i=1, \cdots, n}$ such that 
\begin{align*}
    \|\hat{z}^{i i}\|_{\mathrm{BMO}}^2 + \sumj \|\hat{z}^{i j}\|_{\mathrm{BMO}}^2  \leq (\max_i A^{i,n})^2,\\
    \|\hat{z}^{i i'}\|_{\mathrm{BMO}}^2 + \sumj \|\hat{z}^{i j'}\|_{\mathrm{BMO}}^2  \leq (\max_i A^{i,n})^2,
\end{align*}
for $i=1,\cdots, n $. Let 
$\left\{\left(\Delta \Yc_t^{i,n},\Delta \Zc_t^{i i},(\Delta \Zc_t^{i j})_{j \neq i}\right) \right\}_{i=1, \cdots, n}$ and 
$\left\{\left(\Delta \Yc_t^{i',n},\Delta \Zc_t^{i i'},(\Delta \Zc_t^{i j'})_{j \neq i}\right) \right\}_{i=1, \cdots, n}$
be the corresponding solutions. Then it follows that
{\small\begin{align*}
    \d (\Delta \Yc_t^{i,n}-\Delta \Yc_t^{i',n})=
    \tilde{F}\left((\Zc_t^u)_{u\in I}, (\hat{z}_t^{i i})_{1\leq i\leq n} ,(\hat{z}_t^{i j})_{i\neq j},(\hat{z}_t^{i i'})_{1\leq i\leq n} ,(\hat{z}_t^{i j'})_{i\neq j}\right)\d t
    +\sumall (\Delta \Zc_t^{i j}-\Delta \Zc_t^{i j'})\d W_t^j,
\end{align*}}
where
{\small\begin{align*}
    &\tilde{F}\left((\Zc_t^u)_{u\in I}, (\hat{z}_t^{i i})_{1\leq i\leq n} ,(\hat{z}_t^{i j})_{i\neq j},(\hat{z}_t^{i i'})_{1\leq i\leq n} ,(\hat{z}_t^{i j'})_{i\neq j}\right)\\
    =&-(\Zc_t^\frac{i}{n})^{\top}(\hat{z}_t^{i i}-\hat{z}_t^{i i'})-\frac{1}{2}(|\hat{z}_t^{i i}|^2-|\hat{z}_t^{i i'}|^2)
    -\gamma^\frac{i}{n}(\Zc_t^\frac{i}{n}+ \theta_t^\frac{i}{n})^{\top}(\Delta g_t^{i,n}-\Delta g_t^{i',n}) 
    -\gamma^\frac{i}{n} (\hat{z}_t^{i i}-\hat{z}_t^{i i'})^{\top} (g_t^\frac{i}{n}+\Delta g_t^{i,n})\\
    &-\gamma^\frac{i}{n}(\hat{z}_t^{i i'})^{\top} (\Delta g_t^{i,n}-\Delta g_t^{i',n})
    +\frac{\gamma^\frac{i}{n}(1-\gamma^\frac{i}{n})}{2}\left(2(g_t^\frac{i}{n})^{\top} (\Delta g_t^{i,n}-\Delta g_t^{i',n})+(|\Delta g_t^{i,n}|^2-|\Delta g_t^{i',n}|^2)\right) \\
    &+\rho \gamma^\frac{i}{n} \sumj \lambda_{i j}^n (\theta^{j}_{t})^{\top}  (\Delta g_t^{j,n}-\Delta g_t^{j',n})
    -\frac{1}{2}\rho \gamma^\frac{i}{n} \sumj \lambda_{i j}^n \left(2(g_t^\frac{j}{n})^{\top} (\Delta g_t^{j,n}-\Delta g_t^{j',n})+(|\Delta g_t^{j,n}|^2-|\Delta g_t^{j',n}|^2)\right)\\
     &+\rho \gamma^\frac{i}{n} \sumj \lambda_{i j}^n 
     \left((\hat{z}_t^{i j}-\hat{z}_t^{i j'})^{\top} (g_t^\frac{j}{n}+\Delta g_t^{j,n})+(\hat{z}_t^{i j'})^{\top} (\Delta g_t^{j,n}-\Delta g_t^{j',n}) \right)\\
     &-\frac{1}{2}\rho^2 (\gamma^\frac{i}{n})^2\sumj (\lambda_{i j}^n)^2\left(|g_t^\frac{j}{n}+\Delta g_t^{j,n}|^2-|g_t^\frac{j}{n}+\Delta g_t^{j',n}|^2 \right) -\frac{1}{2}\sumj (|\hat{z}_t^{i j}|^2- |\hat{z}_t^{i j'}|^2).
\end{align*}}
Then the same argument as above yields 
{\small\begin{align*}
    &\left(\Delta \Yc_\tau^{i,n} - \Delta \Yc_\tau^{i',n}\right)^2+\sumall \E \left[ \left. \int_{\tau}^{\widetilde{T}} |\Delta \Zc_t^{i j,n}-\Delta \Zc_t^{i j',n}|^2 \d t \right\rvert \Fc_\tau^n \right]\\
    \leq & \left\{ \left[ (1+\frac{3\tilde{\gamma}}{1-\overline{\gamma}})(2\|\Zc\|_{\mathrm{BMO},I}+\|\hat{z}^{i i}\|_{\mathrm{BMO}} +\|\hat{z}^{i i'}\|_{\mathrm{BMO}})
    +\frac{6\tilde{\gamma}}{1-\overline{\gamma}}\sqrt{\widetilde{T}} \|\theta\|+4\tilde{\gamma}\sqrt{\widetilde{T}}C_0
    \right]\cdot \|\hat{z}^{i i}-\hat{z}^{i i'}\|_{\mathrm{BMO}} \right.\\
    & \quad+\frac{2\rho \tilde{\gamma}}{n-1} \sumj \left[\frac{1}{(1-\overline{\gamma})^2}\|\Zc\|_{\mathrm{BMO},I}+\frac{2-\overline{\gamma}}{(1-\overline{\gamma})^2} \sqrt{\widetilde{T}}\|\theta\|_\infty 
    +\frac{1}{1-\overline{\gamma}} \sqrt{\widetilde{T}} C_0 \right] \cdot \|\hat{z}^{j j}-\hat{z}^{j j'}\|_{\mathrm{BMO}}\\ & \quad+\frac{\rho \tilde{\gamma}}{n-1} \sumj \left[\frac{1}{(1-\overline{\gamma})^2}(\|\hat{z}^{j j}\|_{\mathrm{BMO}}+\|\hat{z}^{j j'}\|_{\mathrm{BMO}})
    +\frac{2}{1-\overline{\gamma}} \|\hat{z}^{i j'}\|_{\mathrm{BMO}} \right] \cdot \|\hat{z}^{j j}-\hat{z}^{j j'}\|_{\mathrm{BMO}}\\
    & \quad+ \frac{2\rho \tilde{\gamma}}{n-1} \sumj 
    \left[\frac{1}{1-\overline{\gamma}}\|\Zc\|_{\mathrm{BMO},I}
    +\frac{1}{1-\overline{\gamma}}\sqrt{\widetilde{T}} \|\theta\|_\infty +\sqrt{\widetilde{T}} C_0 
    +\frac{1}{1-\overline{\gamma}} \|\hat{z}^{j j}\|_{\mathrm{BMO}} \right] \cdot\|\hat{z}^{i j}-\hat{z}^{i j'}\|_{\mathrm{BMO}}\\
    &\quad+ \frac{\rho^2 \tilde{\gamma}^2}{(n-1)^2}
    \sumj \left[\frac{2\|\Zc\|_{\mathrm{BMO},I} + \|\hat{z}^{j j}\|_{\mathrm{BMO}}+\|\hat{z}^{j j'}\|_{\mathrm{BMO}}+\sqrt{\widetilde{T}} \|\theta\|
    }{(1-\overline{\gamma})^2} +\frac{2}{1-\overline{\gamma}}\sqrt{\widetilde{T}} C_0 \right]\cdot
    \|\hat{z}^{j j}-\hat{z}^{j j'}\|_{\mathrm{BMO}}\\
    & \quad \left.+\sumj \left[\|\hat{z}^{i j}\|_{\mathrm{BMO}}+\|\hat{z}^{i j'}\|_{\mathrm{BMO}}\right] \cdot \|\hat{z}^{i j}-\hat{z}^{i j'}\|_{\mathrm{BMO}}\right\} \cdot \|\Delta \Yc^{i,n} - \Delta \Yc^{i,n'}\|_{\mathbb{S}^\infty(\R^{d},\F^n)}.
\end{align*}}
Utilizing the previously derived estimates,  Cauchy's inequality, and taking the supremum over stopping times $\tau$, we obtain
{\small\begin{align*}
&\|\Delta \Yc^{i,n} - \Delta \Yc^{i,n'}\|_{\mathbb{S}^\infty(\R^{d},\F^n)}^2+\sumall \|\Delta \Zc_t^{i j}-\Delta \Zc_t^{i j'}\|_{\mathrm{BMO}}^2\\
\leq & C_1
\left(\|\hat{z}^{i i}-\hat{z}^{i i'}\|_{\mathrm{BMO}}^2 
+\sqrt{\frac{1}{n-1}\sumj\| \hat{z}^{j j}-\hat{z}^{j j'}\|_{\mathrm{BMO}}^2} + \sqrt{\sumj\| \hat{z}^{i j}-\hat{z}^{i j'}\|_{\mathrm{BMO}}^2}
\right) 
\|\Delta \Yc^{i,n} - \Delta \Yc^{i,n'}\|_{\mathbb{S}^\infty(\R^{d},\F^n)}\\
\leq & \|\Delta \Yc^{i,n} - \Delta \Yc^{i,n'}\|_{\mathbb{S}^\infty(\R^{d},\F^n)}^2
+ C_2 \left(  \sumall  \| \hat{z}^{i j}-\hat{z}^{i j'}\|_{\mathrm{BMO}}^2 + \frac{1}{n-1}\sumj\| \hat{z}^{j j}-\hat{z}^{j j'}\|_{\mathrm{BMO}}^2 \right),
\end{align*}}
where $C_1$ and $C_2$ are positive constants depending on $\tilde{\gamma}$, $\overline{\gamma}$, $\|\theta\|$, $C_0$, and $R$. Specifically,
\begin{align*}
C_2 = C(\tilde{\gamma}, \overline{\gamma})R^2 + C(\tilde{\gamma}, \overline{\gamma},\|\theta\|, C_0, R)(\widetilde{T}+\rho).
\end{align*}

We choose a sufficiently small $R_2 (\leq R_1)$, and for each $R \leq R_2$, select sufficiently small $\rho_2(R) \leq \rho_1(R)$ and $T_2(R) \leq T_1(R)$, such that for any $\rho \leq \rho_2(R)$ and $\widetilde{T} \leq T_2(R)$, $C_2\leq \frac{1}{4}$. Consequently, for each $0 \leq \rho \leq \rho_2(R)$ and $0 \leq \widetilde{T} \leq T_2(R)$, taking an average over $i$ on both sides and rearranging terms, we obtain
\begin{align*}
\frac{1}{n} \sum_{i=1}^n \sumall \|\Delta \Zc_t^{i i,n}-\Delta \Zc_t^{i i',n}\|_{\mathrm{BMO}}^2 
\leq \frac{1}{2}\left(\frac{1}{n}\sum_{i=1}^n \sumall \|\hat{z}^{i j}-\hat{z}^{i j'}\|_{\mathrm{BMO}}^2\right).
\end{align*}

Thus, $\Psi$ is a contraction mapping. By the contraction mapping theorem, there exists a fixed point, which implies the existence of a solution to BSDEs \eqref{eqn:delta-BSDE}. This completes the proof of the theorem.
\end{proof}

Combining the above arguments, we have thus proved Theorem \ref{thm:n-game.existence}.
\qed

\subsection{Convergence result}
\label{subsec:convergence}
\subsubsection{Proofs of \texorpdfstring{\Cref{thm:main.limit}}{Theorem~\ref*{thm:main.limit}}}
Parts (1) and (2) of the \Cref{thm:main.limit} follow from \Cref{thm:graphon-game.existence} and \Cref{thm:n-game.existence}, as it is noteworthy that in the proof of \Cref{thm:n-game.existence}, the selection of $\rho^*$ and $T^*$ does not depend on $n$. It remains to prove part (3) of the \Cref{thm:main.limit}.

From \Cref{thm:n-FBSDE}, \Cref{prop:graphon-FBSDE}, \Cref{lemma:noncom-n-correspondence} and \Cref{lemma:noncom-graphon-correspondence}, we  derive
\begin{align*}
    \opt_t^{i,n}-\opt_t^{\frac{i}{n}}=(\sigma_t^{\frac{i}{n}})^{-\top} \left(P_t^{\frac{i}{n}}\left(\frac{1}{1-\gamma^{\frac{i}{n}}}(\Zc_t^{ii,n}+\theta_t^\frac{i}{n})\right)- P_t^\frac{i}{n}\left(\frac{1}{1-\gamma^\frac{i}{n}}(\Zc_t^{\frac{i}{n}}+\theta_t^\frac{i}{n})\right)\right).
\end{align*}
Observing that
\begin{align*}
\|\opt^{i,n}-\opt^{\frac{i}{n}}\|_{\mathrm{BMO}} \leq \frac{\|\sigma^{-1}\|}{1-\overline{\gamma}} \|\Delta \Zc^{ii,n}\|_{\mathrm{BMO}}
\end{align*}
and using the energy inequality in the form of It\^o integrals, with reference to \cite[Sect. 2.1]{a:kazamaki2006continuous} or \cite[Appendix C]{a:fu2023mean}, we need only prove
\begin{align}
\label{eq:cdn to pf-1}
\|\Delta \Zc^{ii,n}\|_{\mathrm{BMO}} \rightarrow 0,
\quad \text{as} \quad n \rightarrow \infty
\end{align}
to establish \eqref{eq:conv.statement1}.
Similarly, 
\begin{align*}
    V_0^{i,n}\left((\opt^{j,n})_{j\neq i}\right) - V_0^{\frac{i}{n},G}((\tilde{\pi}^v)_{v\neq \frac{i}{n}})=&\frac{1}{\gamma^{\frac{i}{n}}}(x^{\frac{i}{n}})^{\gamma^\frac{i}{n}}\left(\exp\left( \Yc_0^{i,n}-\rho\gamma^\frac{i}{n}\sumj \lambda_{i j}^n \log(x^\frac{j}{n}) \right) \right.\\
    &\left.-\exp \left(\Yc_0^\frac{i}{n}-\rho\gamma^\frac{i}{n} 
    \int_I \log(x^v) G(\frac{i}{n}, v)\d v \right)\right).
\end{align*}
To establish \eqref{eq:conv.statement2}, it suffices to prove
\begin{equation}
\label{eq:cdn to pf-2} 
\Delta \Yc_0^{i,n}-\rho\gamma^\frac{i}{n}\left( \sumj \lambda_{i j}^n  \log(x^\frac{j}{n}) - \int_I \log(x^v) G(\frac{i}{n}, v)\d v \right) \rightarrow 0, \quad \text{as} \quad n \rightarrow \infty.
\end{equation}

Using \Cref{lemma:A} and \Cref{lemma:x}, in conjunction with the estimate \eqref{eqn:convergence-result}, both \eqref{eq:cdn to pf-1} and \eqref{eq:cdn to pf-2} hold.
Thus, the theorem is proven.
\qed

\subsubsection{Proofs of \texorpdfstring{\Cref{pro:limit.example}}{Theorem~\ref*{pro:limit.example}}}
It is readily observed that $\mathcal{Z}_t^{i j,n}=0$ and $\mathcal{Z}_t^{u}=0$ are particular solutions to the BSDEs in \Cref{lemma:noncom-n-correspondence} and \Cref{lemma:noncom-graphon-correspondence}, respectively. By \eqref{eqn:noncom-n:relation} and \eqref{eqn:noncom-graphon:relation}, we deduce that both FBSDEs admit particular solutions corresponding to 
\begin{equation*}
Z_t^{i i,n}=\frac{(1-\gamma^i)\Zc_t^{i i,n}-\rho \gamma^i \lambda_{i i}^n \theta_t^i}{1-\gamma^i+\rho \gamma^i \lambda_{i i}^n}=\frac{-\rho \gamma^i \lambda_{i i}^n \theta_t^i}{1-\gamma^i+\rho \gamma^i \lambda_{i i}^n}
\end{equation*}
and
\begin{equation*}
    Z_t^u=\mathcal{Z}_t^{u}=0.
\end{equation*}
Substituting these into \eqref{eqn:n-optimal} and \eqref{eqn:graphon-optimal} completes the proof of this proposition.

\appendix
\section{Auxiliary results}
The following lemmas are used in the proof of \Cref{thm:main.limit}. As \Cref{thm:main.limit} can be deduced from the proof of \Cref{thm:n-game.existence}, we may use intermediate results and notation from that proof without explicit reintroduction, provided no ambiguity arises.

\begin{lemma}\label{lemma:Z}
    Under the Conditions in \Cref{thm:main.limit}, we have the following conclusion
    \begin{align*}
    \max_{1\leq j\leq n}\max_{u\in (\frac{j-1}{n},\frac{j}{n}]}\|\Zc^{\frac{j}{n}}-\Zc^{u}\|_{\mathrm{BMO}} \rightarrow 0.
    \end{align*}
\end{lemma}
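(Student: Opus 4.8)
The plan is to regard \eqref{eq:noncom-graphon-bsde} as a family of quadratic BSDEs parametrised by $u\in I$ and to prove a stability estimate: for $u\in(\frac{j-1}{n},\frac{j}{n}]$ the difference $\Zc^{j/n}-\Zc^u$ is controlled, in $\H_{\mathrm{BMO}}$, by a ``data distance'' between the two equations which vanishes under the hypotheses of \Cref{thm:main.limit}. First I would assemble the ingredients. By the no--common--noise analogue of \Cref{thm:R-exist-2} invoked just before \eqref{eqn:delta-BSDE}, the whole family $(\Zc^v)_{v\in I}$ solving \eqref{eq:noncom-graphon-bsde} lies in the $R$-ball of $\H_{\mathrm{BMO}}(\R^{d},\F,I)$, and $R$ may be taken as small as desired by shrinking $\rho$ and $\widetilde T$ (which is allowed, since $\rho^{*},T^{*}$ in \Cref{thm:main.limit} remain free). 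Writing $g^v_t:=P^v_t(\tfrac{1}{1-\gamma^v}(\Zc^v_t+\theta^v_t))$, \Cref{cd:P-n} yields the linear growth $|g^v_t|\le|\Zc^v_t|+|\theta^v_t|+C_0$; since $\theta$ is bounded and $\gamma$ satisfies \Cref{assm:gamma-graphon}, the non-homogeneous term $h^u_t:=\rho\gamma^u\E[\int_I((\theta^v_t)^{\top}g^v_t-\tfrac12|g^v_t|^2)G(u,v)\,\d v]$ is deterministic, and, using $\|\Zc^v\|_{\mathrm{BMO}}\le R$ and boundedness of the coefficients, $\int_0^{\widetilde T}|h^{j/n}_s-h^u_s|\,\d s\le C\rho(|\gamma^{j/n}-\gamma^u|+\|G(\tfrac{j}{n},\cdot)-G(u,\cdot)\|_2)$, which tends to $0$ uniformly over $j$ and $u\in(\frac{j-1}{n},\frac{j}{n}]$ by \Cref{cdn:G}(3).

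Next I would form the difference equation. On the common filtration $\F$ put $(\Delta\Yc,\Delta\Zc):=(\Yc^{j/n}-\Yc^u,\Zc^{j/n}-\Zc^u)$ and split the driver of $\Delta\Yc$ into: (i) the difference of the quadratic generator $z\mapsto\tfrac12|z|^2+\tfrac{\gamma}{2(1-\gamma)}|z+\theta|^2-\tfrac{\gamma(1-\gamma)}{2}|(I-P)(\tfrac{1}{1-\gamma}(z+\theta))|^2$ \emph{at the single index $j/n$}, evaluated at $\Zc^{j/n}$ and at $\Zc^u$, which --- exactly as in \eqref{eqn:transform} and in the uniqueness step of \Cref{thm:R-exist} --- linearises as $\widetilde{L}^{1}_t\cdot\Delta\Zc_t$ with $|\widetilde{L}^{1}_t|\le C(\tilde{\gamma},\overline{\gamma},\|\theta\|,C_0)(1+|\Zc^{j/n}_t|+|\Zc^u_t|)$; and (ii) a remainder $\Ec^{j,u}_t$ gathering the coefficient mismatches $\theta^{j/n}$ versus $\theta^u$, $\gamma^{j/n}$ versus $\gamma^u$, $P^{j/n}$ versus $P^u$ (handled by the $1$-Lipschitz and linear-growth properties of the projection together with \Cref{lemma:f-g}) and $h^{j/n}$ versus $h^u$, for which one verifies $\||\Ec^{j,u}|^{1/2}\|^2_{\H_{\mathrm{BMO}}(\R,\F)}\le C(1+R)\,\varepsilon_n$, where $\varepsilon_n:=\max_j\max_{u\in(\frac{j-1}{n},\frac{j}{n}]}(\|\theta^{j/n}-\theta^u\|_{\mathrm{BMO}}+|\gamma^{j/n}-\gamma^u|+\|G(\tfrac{j}{n},\cdot)-G(u,\cdot)\|_2)\to0$ by \Cref{cdn:Convergence}(1)--(2) and \Cref{cdn:G}(3). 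Then I run the computation from the contraction step of \Cref{thm:R-exist} (equivalently \Cref{thm:n-bsde.existence}): apply It\^o to $\e^{\kappa t}|\Delta\Yc_t|^2$, take $\E[\,\cdot\,|\Fc_\tau]$ for a stopping time $\tau$, dominate the $\widetilde{L}^{1}\cdot\Delta\Zc$ contribution via Young's inequality and \Cref{lemma:f-g} by $CR^{2}\|\Delta\Zc\|^2_{\H_{\mathrm{BMO},\kappa}}+\|\Delta\Yc\|^2_{\S^\infty}$ and the $\Ec^{j,u}$ contribution by $\|\Delta\Yc\|^2_{\S^\infty}+C\varepsilon_n^{2}$, choose $\kappa$ large to absorb $\|\Delta\Yc\|_{\S^\infty}$ and then $R\le R^{*}$ small so that $CR^{2}\le\frac12$, absorb $\|\Delta\Zc\|^2_{\H_{\mathrm{BMO},\kappa}}$, and conclude $\|\Zc^{j/n}-\Zc^u\|^2_{\mathrm{BMO}}\le C\varepsilon_n^{2}$. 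Taking $\max_j\max_u$ and letting $n\to\infty$ finishes the proof.

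The main obstacle is that $\Zc^{j/n}$ and $\Zc^u$ are driven by the \emph{distinct} Brownian motions $W^{j/n}$ and $W^u$, so a priori $\Delta\Yc$ is not a genuine one-driver BSDE; the estimate above is legitimate precisely because the hypotheses of \Cref{thm:main.limit} --- notably \Cref{cdn:Convergence}(1) --- force the coefficient processes, hence the generators, of nearby agents to be $\varepsilon_n$-close on the common probability space, so that the two equations can be treated as a single quadratic BSDE with $\varepsilon_n$-perturbed data and driving noise; making this reduction precise, and checking that $(t,u,\omega)\mapsto(\Delta\Yc_t,\Delta\Zc_t)$ is $\Bc([0,T])\otimes\Ic\boxtimes\F$-measurable via the Picard construction as in \Cref{thm:R-exist}, is where the real work lies. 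The smallness of $R$, available from the smallness of $\rho$ and $\widetilde T$, is what allows the quadratic terms to be absorbed.
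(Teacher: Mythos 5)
Your stability estimate is, modulo the point below, the same computation the paper carries out: subtract the two graphon BSDEs \eqref{eq:noncom-graphon-bsde}, linearise the quadratic part of the generator, collect the mismatches $\theta^{\frac{j}{n}}$ vs.\ $\theta^{u}$, $\gamma^{\frac{j}{n}}$ vs.\ $\gamma^{u}$ and $G(\frac{j}{n},\cdot)$ vs.\ $G(u,\cdot)$ into a small source term, run the It\^o/\Cref{lemma:f-g}/Young absorption exactly as in \Cref{thm:n-bsde.existence} with $R,\rho,\widetilde T$ small, and conclude from \Cref{cdn:G}(3) and \Cref{cdn:Convergence}(1)--(2). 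That part matches the paper.

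However, the issue you yourself flag as ``the main obstacle'' is a genuine gap, and the way you propose to get around it does not work. Since $W^{\frac{j}{n}}$ and $W^{u}$ are essentially pairwise independent, the difference of the two equations on the common filtration is a BSDE driven by the $2d$-dimensional Brownian motion $(W^{\frac{j}{n}},W^{u})$ whose martingale integrand is the pair $(\Zc^{\frac{j}{n}},-\Zc^{u})$, not $\Delta\Zc$. Applying It\^o's formula to $\e^{\kappa t}|\Delta\Yc_t|^2$ then produces the bracket $\int(|\Zc^{\frac{j}{n}}_s|^2+|\Zc^{u}_s|^2)\,\d s$ rather than $\int|\Zc^{\frac{j}{n}}_s-\Zc^{u}_s|^2\,\d s$, so the absorption argument yields no control on $\|\Zc^{\frac{j}{n}}-\Zc^{u}\|_{\mathrm{BMO}}$ at all. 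Closeness of the coefficient processes does not make the driving noises close in any usable sense --- independent Brownian motions are not ``$\varepsilon_n$-perturbations'' of one another --- so the two equations cannot be ``treated as a single quadratic BSDE with $\varepsilon_n$-perturbed data and driving noise''. The paper supplies precisely the missing ingredient: following \cite[Lemma 2.1]{a:bayraktar2023propagation} it first couples the two equations, i.e.\ regards $\Yc^{\frac{j}{n}}$ and $\Yc^{u}$ as driven by the \emph{same} Brownian motion $W$, and only then subtracts; the difference is then a genuine one-driver quadratic BSDE, the estimate you describe applies verbatim, and one obtains $\|\Zc^{\frac{j}{n}}-\Zc^{u}\|^2_{\mathrm{BMO}}\leq 2D_4\big(\|\theta^{\frac{j}{n}}-\theta^{u}\|^2_{\mathrm{BMO}}+|\gamma^{\frac{j}{n}}-\gamma^{u}|^2+\int_I|G(\frac{j}{n},v)-G(u,v)|^2\,\d v\big)$. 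Your proposal explicitly defers ``making this reduction precise'' to later --- but that reduction (the coupling step) is the essential idea of the proof, and the heuristic you give in its place is incorrect, so the argument as written does not establish the lemma.
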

\begin{proof}
Similar to the proof of \cite[Lemma 2.1]{a:bayraktar2023propagation}, we  consider $\Yc^{\frac{j}{n}}$ and $\Yc^{u}$ as being driven by the same Brownian motion $W$. Subtracting the two BSDEs,  
\begin{align*}
    -\d (\Yc_t^{\frac{j}{n}}-\Yc_t^{u})
    = \hat{F}(\Zc) \d t -\left(\Zc_t^\frac{j}{n}-\Zc_t^u \right) \d W_t,
\end{align*}
where 
{\small\begin{align*}
    \hat{F}(\Zc)=&\frac{1}{2}\left(|\Zc_t^{\frac{j}{n}}|^2\!-\!|\Zc_t^{u}|^2\right) \!+\!\left(\gamma^{\frac{j}{n}}(\Zc_t^{\frac{j}{n}}\!+\!\theta_t^{\frac{j}{n}})^{\top} g_t^{\frac{j}{n}}\!-\!\gamma^{u}(\Zc_t^{u} \!+\!\theta_t^u)^{\top} g_t^{u}\right)
    \!-\!\left(\frac{\gamma^{\frac{j}{n}}(1\!-\!\gamma^{\frac{j}{n}})}{2}|g_t^{\frac{j}{n}}|^2-\frac{\gamma^{u}(1\!-\!\gamma^{u})}{2}|g_t^{u}|^2\right)\\
    & -\left(\rho \gamma^{\frac{j}{n}}\E\left[\int_I (\theta_t)^{\top} g_t^v G(\frac{j}{n},v)\d v \right]-\rho \gamma^{u}\E\left[\int_I (\theta_t)^{\top} g_t^v G(u,v)\d v \right] \right)\\
    &+\left(\frac{1}{2}\rho \gamma^{\frac{j}{n}}\E\left[\int_I |g_t^v|^2 G(\frac{j}{n},v)\d v \right]-\frac{1}{2}\rho \gamma^{u}\E\left[\int_I |g_t^v|^2 G(u,v)\d v \right] \right).
\end{align*}}
Employing an analogous argument to that presented in the proof of Theorem \ref{thm:n-bsde.existence}, we obtain the following estimate:
{\small\begin{align*}
    &\|\Yc^{\frac{j}{n}}-\Yc^{u}\|_{\mathbb{S}^\infty(\R^{d}, \F^n)}^2 + \|\Zc^{\frac{j}{n}}-\Zc^{u}\|_{\mathrm{BMO}}^2\\
    \leq & \|\Yc^{\frac{j}{n}}-\Yc^{u}\|_{\mathbb{S}^\infty(\R^{d}, \F^n)} \cdot
    \left(\!D_1 \|\Zc^{\frac{j}{n} }-\Zc^{u}\|_{\mathrm{BMO}} \!+ \!D_2 \left(\|\theta^{\frac{j}{n}}-\theta^{u}\|_{\mathrm{BMO}}
    \!+\! |\gamma^{\frac{j}{n}}-\gamma^u| 
    \!+\!  \int_I |G(\frac{j}{n},v)-G(u,v)| \d v\right)
    \right)\\
    \leq & \|\Yc^{\frac{j}{n}}-\Yc^{u}\|_{\mathbb{S}^\infty(\R^{d}, \F^n)}^2
    \!+\!D_3 \|\Zc^{\frac{j}{n} }-\Zc^{u}\|_{\mathrm{BMO}}^2
    \!+\!D_4 \left(\|\theta^{\frac{j}{n}}-\theta^{u}\|_{\mathrm{BMO}}^2
    \!+\! |\gamma^{\frac{j}{n}}-\gamma^u|^2
    \!+\! \int_I |G(\frac{j}{n},v)-G(u,v)|^2 \d v\right),
\end{align*}}
where $D_1,D_2,D_3$ and $D_4$ are positive constants depending on $\tilde{\gamma}$, $\overline{\gamma}$, $\|\theta\|$, $C_0$, and $R$. Specifically, 
\begin{align*}
D_3 = C(\tilde{\gamma}, \overline{\gamma})R^2 + C(\tilde{\gamma}, \overline{\gamma},\|\theta\|, C_0, R)(T+\rho).
\end{align*}

We choose a sufficiently small $R_3 (\leq R_2)$ and, for any $R \leq R_3$, select constants $\rho_3(R) \leq \rho_2(R)$ and $T_3(R) \leq T_2(R)$ depending on $R$ to ensure that for any $\rho \leq \rho_3(R)$ and $\widetilde{T} \leq T_3(R)$, $D_5 \leq \frac{1}{2}$. Consequently, for each $0 \leq \rho \leq \rho_3(R)$ and $0 \leq \widetilde{T} \leq T_3(R)$, we have
\begin{align*}
    \|\Zc^{\frac{j}{n}}-\Zc^{u}\|_{\mathrm{BMO}}^2
    \leq 2D_4 \left(\|\theta^{\frac{j}{n}}-\theta^{u}\|_{\mathrm{BMO}}^2
    + |\gamma^{\frac{j}{n}}-\gamma^u|^2
    + \int_I |G(\frac{j}{n},v)-G(u,v)|^2 \d v\right).
\end{align*}
The lemma is thus established, contingent on \Cref{cdn:G} and \Cref{cdn:Convergence}.
\end{proof}

\begin{lemma}
    \label{lemma:int-G}
    Under \Cref{cdn:G}, we have
    \begin{align*}
    \max_{1\leq i\leq n} \int_I |G_n(\frac{i}{n},v)-G(\frac{i}{n},v)| \d v \rightarrow 0,
    \quad \text{as} \quad n \rightarrow \infty.
    \end{align*} 
\end{lemma}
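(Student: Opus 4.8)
The plan is to exploit the step-graphon structure of $G_n$ together with the piecewise-constancy identity $\lambda_{ij}=G_n(\tfrac in,\tfrac jn)$ from \Cref{cdn:G}(1)--(2), which forces $G_n(\tfrac in,v)=G_n(\tfrac in,\tfrac jn)=G(\tfrac in,\tfrac jn)$ for every $v\in(\tfrac{j-1}{n},\tfrac jn]$. Fix $i$ and split the integral over $I$ into the $n$ cells $(\tfrac{j-1}{n},\tfrac jn]$, $j=1,\dots,n$. On each cell,
\[
\int_{(\frac{j-1}{n},\frac jn]}|G_n(\tfrac in,v)-G(\tfrac in,v)|\,\d v
=\int_{(\frac{j-1}{n},\frac jn]}\big|G(\tfrac in,\tfrac jn)-G(\tfrac in,v)\big|\,\d v,
\]
so that, summing over $j$ and adding and subtracting $G(\tfrac jn,v)$ is \emph{not} needed; instead one directly recognises that for $v\in(\tfrac{j-1}{n},\tfrac jn]$ we have $\tfrac jn=\tfrac{\ceil{nv}}{n}$, and by symmetry of the graphon $G(\tfrac in,\tfrac jn)=G(\tfrac jn,\tfrac in)$ and $G(\tfrac in,v)=G(v,\tfrac in)$. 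Hence the cell integral is bounded by $\int_{(\frac{j-1}{n},\frac jn]}|G(\tfrac jn,\tfrac in)-G(v,\tfrac in)|\,\d v$, which is exactly the quantity controlled (after relabelling the roles of the two arguments) by \Cref{cdn:G}(3).

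More precisely, I would argue as follows. By the Cauchy--Schwarz inequality applied on each cell of length $1/n$,
\[
\int_{(\frac{j-1}{n},\frac jn]}\big|G(\tfrac jn,\tfrac in)-G(v,\tfrac in)\big|\,\d v
\le\frac1{\sqrt n}\Big(\int_{(\frac{j-1}{n},\frac jn]}\big|G(\tfrac jn,\tfrac in)-G(v,\tfrac in)\big|^2\,\d v\Big)^{1/2}.
\]
Summing over $j=1,\dots,n$ and applying Cauchy--Schwarz in the $j$-sum,
\[
\int_I|G_n(\tfrac in,v)-G(\tfrac in,v)|\,\d v
\le\frac1{\sqrt n}\sum_{j=1}^n\Big(\int_{(\frac{j-1}{n},\frac jn]}|G(\tfrac jn,\tfrac in)-G(v,\tfrac in)|^2\,\d v\Big)^{1/2}
\le\Big(\sum_{j=1}^n\int_{(\frac{j-1}{n},\frac jn]}|G(\tfrac jn,\tfrac in)-G(v,\tfrac in)|^2\,\d v\Big)^{1/2}.
\]
For each $j$ and each $v\in(\tfrac{j-1}{n},\tfrac jn]$, the integrand is at most $\max_{u\in(\frac{j-1}{n},\frac jn]}|G(\tfrac jn,\tfrac in)-G(u,\tfrac in)|^2$; more usefully, integrating the variable that plays the role of the second argument, one gets the bound $\int_I|G(\tfrac jn,v)-G(u,v)|^2\,\d v$ evaluated at the worst $u$, i.e. the quantity in \Cref{cdn:G}(3). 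Therefore, taking the maximum over $i$ and then over $j$,
\[
\max_{1\le i\le n}\int_I|G_n(\tfrac in,v)-G(\tfrac in,v)|\,\d v
\le\Big(\max_{1\le j\le n}\ \max_{u\in(\frac{j-1}{n},\frac jn]}\int_I|G(\tfrac jn,v)-G(u,v)|^2\,\d v\Big)^{1/2}\longrightarrow0
\]
as $n\to\infty$, by \Cref{cdn:G}(3).

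The only delicate point — and the one I expect to require the most care in the write-up — is the bookkeeping that matches the cellwise term $\int_{(\frac{j-1}{n},\frac jn]}|G(\tfrac jn,\tfrac in)-G(v,\tfrac in)|^2\,\d v$ (which has $v$ ranging over a \emph{single} cell and $\tfrac in$ fixed) to the hypothesis \Cref{cdn:G}(3) (which has the free variable ranging over all of $I$ and the cell-index on the \emph{first} coordinate). This is handled by the symmetry of $G$ and by noting that for $v\in(\tfrac{j-1}{n},\tfrac jn]$ one may instead view $v$ as lying in a fixed cell and bound the inner deviation by replacing the integration over $(\tfrac{j-1}{n},\tfrac jn]$ with integration over all of $I$ of the $i$-th argument at the cost of enlarging the expression to $\int_I|G(\tfrac jn,v)-G(u,v)|^2\,\d v$ with $u\in(\tfrac{j-1}{n},\tfrac jn]$, which is precisely condition (3); alternatively one uses that $G$ takes values in $[0,1]$ so the cell integral is trivially at most $1/n$ and combines with a direct continuity-in-measure argument. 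Either route is routine once the indices are lined up correctly; no new estimates beyond \Cref{cdn:G} are needed.
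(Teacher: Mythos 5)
There is a genuine gap, and it is located right at the start: you assert that Conditions \ref{cdn:G}(1)--(2) force $G_n(\tfrac in,\tfrac jn)=G(\tfrac in,\tfrac jn)$, i.e.\ that the step graphon agrees with $G$ at the grid points. Condition (2) only identifies $\lambda_{ij}$ with $G_n(\tfrac in,\tfrac jn)$; nothing in \Cref{cdn:G} says that $G_n$ is obtained by sampling $G$ on the grid. The only link between $G_n$ and $G$ is Condition (4), $n\|G_n-G\|_2^2\to0$, and your argument never uses it. This cannot be repaired by bookkeeping: under (1)--(3) alone the statement is false (take $G_n\equiv0$, $G\equiv1$; (1) and (2) hold with $\lambda_{ij}=0$, (3) holds trivially since $G$ is constant, yet $\int_I|G_n(\tfrac in,v)-G(\tfrac in,v)|\,\d v=1$ for all $n$). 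So any proof of the lemma must invoke (4), and the final bound you derive, with only the modulus-of-continuity term from (3) on the right-hand side, cannot be correct. A secondary problem is the closing ``index line-up'': the cellwise quantity $\int_{(\frac{j-1}{n},\frac jn]}|G(\tfrac jn,\tfrac in)-G(v,\tfrac in)|^2\,\d v$ involves the \emph{pointwise} value of the deviation at the second argument $\tfrac in$, whereas Condition (3) only controls its \emph{average} over the second argument; pointwise control does not follow from the $L^2$ control in (3), so even granting the false grid identification that step does not go through.

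For comparison, the paper's proof keeps both ingredients in play: it first passes from the $L^1$ to the $L^2$ deviation, then writes $\frac1n\int_I|G_n(\tfrac in,v)-G(\tfrac in,v)|^2\,\d v=\int_{\frac{i-1}{n}}^{\frac in}\int_I|G_n(\tfrac in,v)-G(\tfrac in,v)|^2\,\d v\,\d u$, and uses $(a+b)^2\le 2a^2+2b^2$ with the intermediate term $G(u,v)$: the piece $\int_{\frac{i-1}{n}}^{\frac in}\int_I|G_n(\tfrac in,v)-G(u,v)|^2\,\d v\,\d u$ is bounded by $\|G_n-G\|_2^2$ via the step structure of $G_n$ (so Condition (4) enters as $n\|G_n-G\|_2^2\to0$ after multiplying back by $n$), while the piece $\int_{\frac{i-1}{n}}^{\frac in}\int_I|G(\tfrac in,v)-G(u,v)|^2\,\d v\,\d u$ is handled by Condition (3). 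Your write-up should be restructured along these lines; the symmetry of $G$ and the grid-sampling picture are not the right tools here.
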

\begin{proof}
Using the Cauchy-Schwarz inequality yields
    \begin{align*}
        \int_I |G_n(\frac{i}{n},v)-G(\frac{i}{n},v)| \d v \leq  \int_I |G_n(\frac{i}{n},v)-G(\frac{i}{n},v)|^2 \d v.
    \end{align*}
    Subsequently, we have
{\small    \begin{align*}
    \frac{1}{n}\int_I |G_n(\frac{i}{n},v)-G(\frac{i}{n},v)|^2 \d v
    =& \int_{\frac{i-1}{n}}^{\frac{i}{n}} \int_I |G_n(\frac{i}{n},v)-G(\frac{i}{n},v)|^2 \d v \d u\\
    \leq & 2\int_{\frac{i-1}{n}}^{\frac{i}{n}} \int_I |G_n(\frac{i}{n},v)-G(u,v)|^2 \d v \d u + 2\int_{\frac{i-1}{n}}^{\frac{i}{n}} \int_I |G(\frac{i}{n},v)-G(u,v)|^2 \d v \d u\\
    \leq & 2 \|G_n-G\|_2^2 + \frac{2}{n} \cdot \max_{u\in (\frac{i-1}{n},\frac{i}{n}]} \int_I |G(\frac{i}{n},v)-G(u,v)|^2 \d v.
    \end{align*}}
     Combining these two equations, we obtain
    \begin{align*}
    \max_{1\leq i\leq n} \int_I |G_n(\frac{i}{n},v)-G(\frac{i}{n},v)| \d v \leq 2n\|G_n-G\|_2^2 +2 \cdot \max_{1\leq i\leq n}  \max_{u\in (\frac{i-1}{n},\frac{i}{n}]} \int_I |G(\frac{i}{n},v)-G(u,v)|^2 \d v.
    \end{align*} 
    The result follows directly from \Cref{cdn:G}.
\end{proof}

\begin{lemma}\label{lemma:A}
    Under the Conditions in \Cref{thm:main.limit}, we have the following conclusion:
    \begin{align*}
    \max_i A^{i,n} \rightarrow 0,
    \quad \text{as} \quad n \rightarrow \infty,
    \end{align*} 
    which is equivalent to
    \begin{align*}
    \max_i\left\|\sqrt{|\Gamma^{i,n(1)}|} \right\|_{\mathrm{BMO}} \rightarrow 0, \quad \max_i\left\|\sqrt{|\Gamma^{i,n(2)}|} \right\|_{\mathrm{BMO}} \rightarrow 0,
    \quad \text{as} \quad n \rightarrow \infty.
    \end{align*}   
\end{lemma}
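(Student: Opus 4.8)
The plan is to reduce the convergence $\max_i A^{i,n}\to 0$ to the two stated BMO-convergences for the quantities $\Gamma^{i,n(1)}$ and $\Gamma^{i,n(2)}$, and then establish each of those. Recalling the definition
\[
A^{i,n}=\sqrt{2\rho\tilde\gamma\big\|\sqrt{|\Gamma^{i,n(1)}|}\big\|_{\mathrm{BMO},I}^2+\rho\tilde\gamma\big\|\sqrt{|\Gamma^{i,n(2)}|}\big\|_{\mathrm{BMO},I}^2+\tfrac{\rho^2(\tilde\gamma)^2}{(n-1)^2}\big(\tfrac{1}{1-\overline\gamma}R+\tfrac{1}{1-\overline\gamma}\sqrt{\widetilde T}\|\theta\|+\sqrt{\widetilde T}C_0\big)^2},
\]
the third term under the square root is $O(1/n^2)$ and vanishes automatically, so $\max_i A^{i,n}\to0$ is equivalent to $\max_i\big\|\sqrt{|\Gamma^{i,n(1)}|}\big\|_{\mathrm{BMO}}\to0$ and $\max_i\big\|\sqrt{|\Gamma^{i,n(2)}|}\big\|_{\mathrm{BMO}}\to0$. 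This gives the stated equivalence; it remains to prove these two limits.

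I will treat $\Gamma^{i,n(1)}=\sum_{j\neq i}\lambda_{ij}^n(\theta^j_t)^\top g_t^{j/n}-\E\big[\int_I(\theta^v_t)^\top g_t^v G(\tfrac{i}{n},v)\,\d v\big]$ (and analogously $\Gamma^{i,n(2)}$ with $(\theta^j)^\top g^{j/n}$ replaced by $|g^{j/n}|^2$). The strategy is to decompose the difference into three pieces: (a) replacing the sum $\sum_{j\neq i}\lambda_{ij}^n(\cdot)$ by a step-graphon integral $\int_I G_n(\tfrac{i}{n},v)h^v_t\,\d v$, where $h^v_t:=(\theta^{\lceil nv\rceil/n}_t)^\top g_t^{\lceil nv\rceil/n}$, exploiting $\lambda_{ij}=G_n(\tfrac{i}{n},\tfrac{j}{n})$ from Condition~\ref{cdn:G}(2) and the step structure (1); (b) replacing $h^v_t$ (which is piecewise constant in $v$ over the intervals $(\tfrac{j-1}{n},\tfrac{j}{n}]$, built from $\theta^{j/n}$ and $g^{j/n}=g^{j/n}(\Zc^{j/n})$) by the genuine $(\theta^v_t)^\top g_t^v$, controlled by Condition~\ref{cdn:Convergence}(1), the $1$-Lipschitz/linear-growth bounds on the projection $P^v$, and crucially \Cref{lemma:Z} which gives $\max_j\max_{u\in(\frac{j-1}{n},\frac{j}{n}]}\|\Zc^{j/n}-\Zc^u\|_{\mathrm{BMO}}\to0$; (c) replacing $G_n(\tfrac{i}{n},v)$ by $G(\tfrac{i}{n},v)$ in the integral, controlled by \Cref{lemma:int-G}. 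Since $\theta$ is bounded and $g^v$ has linear growth in $\Zc^v$ with $\|\Zc\|_{\H_{\mathrm{BMO}}(\R^d,\F,I)}\le R$, each $h^v$ is uniformly BMO-bounded, so the $G_n\to G$ substitution in (c) is estimated by $\max_i\int_I|G_n(\tfrac{i}{n},v)-G(\tfrac{i}{n},v)|\,\d v$ times a BMO bound. Taking the maximum over $i$ of the $\mathrm{BMO}$ norms of these three pieces and invoking the three cited inputs shows $\max_i\|\Gamma^{i,n(1)}\|$-type quantities tend to $0$; since convergence of $\|X\|_{\mathrm{BMO}}$ to $0$ is equivalent to convergence of $\|\sqrt{|X|}\|_{\mathrm{BMO}}^2=\||X|^{1/2}\|_{\mathrm{BMO}}^2$ to $0$ (both equal, up to the sup over stopping times, to controlling $\E[\int_\tau^T|X_s|\,\d s\,|\,\mathcal F_\tau]$), this yields the claim. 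The argument for $\Gamma^{i,n(2)}$ is identical after noting $\big||g^{j/n}|^2-|g^v|^2\big|\le(|g^{j/n}|+|g^v|)\,|g^{j/n}-g^v|$ and using boundedness of the BMO norms of $g$.

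The main obstacle I anticipate is step (b): carefully propagating the closeness of $\Zc^{j/n}$ to $\Zc^u$ from \Cref{lemma:Z} through the nonlinear maps $\zeta\mapsto P^v_t\big(\tfrac{1}{1-\gamma^v}(\zeta+\theta^v_t)\big)$ and then through the bilinear expressions $(\theta^v)^\top g^v$ and $|g^v|^2$, while simultaneously handling the discrepancy between $\gamma^{j/n}$ and $\gamma^v$ (Condition~\ref{cdn:Convergence}(2)) and between the constraint sets/projections indexed by $j/n$ versus $v$ — though here the hypothesis in \Cref{thm:main.limit} that $A^u$ is independent of $u$ removes the last difficulty, so $P^{j/n}=P^u$ on each interval. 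One must also be slightly careful that all estimates are uniform in $i$ (hence the $\max_i$ survives), which follows because all constants depend only on $\tilde\gamma,\overline\gamma,\|\theta\|,C_0,R,T$ and not on $i$ or $n$; I would organize the bound as $\max_i\|\Gamma^{i,n(k)}\|_{\mathrm{BMO}}\lesssim \max_j\max_{u}\|\Zc^{j/n}-\Zc^u\|_{\mathrm{BMO}}+\max_j\max_u\|\theta^{j/n}-\theta^u\|_{\mathrm{BMO}}+\max_j\max_u|\gamma^{j/n}-\gamma^u|+\max_i\int_I|G_n(\tfrac{i}{n},v)-G(\tfrac{i}{n},v)|\,\d v+\max_i\max_u\int_I|G(\tfrac{i}{n},v)-G(u,v)|^2\d v$, and each term on the right goes to $0$.
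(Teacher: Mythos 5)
Your reduction of $\max_i A^{i,n}\to 0$ to the two BMO statements, and your three-step decomposition (sum $\to$ step-graphon integral with integrand $h^v_t=(\theta^{\lceil nv\rceil/n}_t)^\top g^{\lceil nv\rceil/n}_t$, then step integrand $\to$ true integrand via \Cref{lemma:Z} and \Cref{cdn:Convergence}, then $G_n\to G$ via \Cref{lemma:int-G}), is exactly the decomposition used in the paper, with the same uniform-in-$i$ constants and the same $O(1/n)$ bookkeeping term from the $\tfrac1{n-1}$ versus $\tfrac1n$ normalization. However, there is one genuine missing step: after your replacements (a)--(c) you arrive at the \emph{pathwise} integral $\int_I(\theta^v_t)^\top g^v_t\,G(\tfrac{i}{n},v)\,\d v$, whereas $\Gamma^{i,n(1)}_t$ is defined with its \emph{expectation} $\E\big[\int_I(\theta^v_t)^\top g^v_t\,G(\tfrac{i}{n},v)\,\d v\big]$ (unconditional here, since there is no common noise). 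None of your three pieces, and none of the inputs you cite (\Cref{cdn:G}, \Cref{cdn:Convergence}, \Cref{lemma:Z}, \Cref{lemma:int-G}), controls the fluctuation of this random integral around its mean, so your chain of estimates does not actually reach the quantity appearing in $\Gamma^{i,n(1)}$ (and likewise $\Gamma^{i,n(2)}$).

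The paper closes this gap as the very first step of its proof: because $(W^u)_{u\in I}$ are e.p.i.\ and there is no common noise, $(\Zc^u)_{u\in I}$, hence $((\theta^v)^\top g^v)_{v\in I}$ and $(|g^v|^2)_{v\in I}$, are e.p.i., and the exact law of large numbers on the rich Fubini extension (\cite[Theorem 2.16]{a:sun2006exact}) gives $\int_I(\theta^v_t)^\top g^v_t\,G(\tfrac{i}{n},v)\,\d v=\E\big[\int_I(\theta^v_t)^\top g^v_t\,G(\tfrac{i}{n},v)\,\d v\big]$ for all $t$, $\P$--a.s.; this identification is precisely what the e.p.i./Fubini-extension framework is built to deliver and cannot be obtained by elementary means. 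Once you insert it, your argument goes through essentially as in the paper. One further small caution: your asserted equivalence between $\|\Gamma\|_{\mathrm{BMO}}\to0$ and $\|\sqrt{|\Gamma|}\|_{\mathrm{BMO}}\to0$ is not valid in general (one norm involves $\int|\Gamma_s|^2\d s$, the other $\int|\Gamma_s|\d s$); it is also unnecessary, since the product estimates you invoke (via \Cref{lemma:f-g}) bound exactly $\sup_{\tau}\big\lVert\E[\int_\tau^T|\Gamma_t|\,\d t\,|\,\Fc_\tau]\big\rVert_{\L^\infty}=\|\sqrt{|\Gamma|}\|^2_{\mathrm{BMO}}$, which is the quantity the lemma requires, so you should state the final bound in that norm as the paper does.
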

\begin{proof}
We only prove $\left\|\sqrt{|\Gamma^{i,n(1)}|} \right\|_{\mathrm{BMO}}$, as the proof for $\left\|\sqrt{|\Gamma^{i,n(2)}|} \right\|_{\mathrm{BMO}}$ is  similar.
    
As $(W^u)_{u \in I}$ are e.p.i., it follows that $(\Zc^u)_{u \in I}$ are also  e.p.i.,
 using the exact law of large numbers, see 
 \cite[Theorem 2.16]{a:sun2006exact}, we have 
 \begin{equation*}
\int_I (\theta_t^v)^{\top} g_t^v G(\frac{i}{n},v) \d v=\E \left[ \int_I (\theta_t^v)^{\top} g_t^v G(\frac{i}{n},v) \d v\right], \forall t\in [0,T] , \P\text{--a.s.}.
 \end{equation*}
Thus 
\begin{align*}
    |\Gamma_t^{i,n (1)}|=&\left| \sumj \lambda_{i j}^n(\theta_t^j)^{\top}g_t^\frac{j}{n}-\E \left[ \int_I (\theta_t^v)^{\top} g_t^v G(\frac{i}{n},v) \d v\right]\right|\\
    \leq &\frac{1}{(n-1)n} \left| \sumall \lambda_{i j}(\theta_t^j)^{\top}g_t^\frac{j}{n}\right|
    +\left|\int_I F^n_t(v) G_n(\frac{i}{n},v)\d v
    -\int_I F^n_t(v) G(\frac{i}{n},v)\d v\right|\\
    &+\left|\int_I F^n_t(v) G(\frac{i}{n},v)\d v
    - \int_I (\theta_t^v)^{\top} g_t^v G(\frac{i}{n},v) \d v\right|, \forall t\in [0,T], \P\text{--a.s.},
\end{align*}
where 
\begin{align*}
    F^n_t(v)=\sumall (\theta_t^j)^{\top}g_t^\frac{j}{n}\delta_{(\frac{j-1}{n},\frac{j}{n}]}(v).
\end{align*}
Notice that 
{\small\begin{align*}
    &\sup_{\tau\in\Tc} \bigg\lVert \E\bigg[ \int_\tau^T \frac{1}{(n-1)n} \left| \sumall \lambda_{i j}(\theta_t^j)^{\top}g_t^\frac{j}{n}\right| \d t \bigg| \Fc_\tau\bigg] \bigg\rVert_{\L^\infty}
    \! \leq \frac{1}{n}C(\overline{\gamma},\|\theta\|, C_0, R),\\
    &\sup_{\tau\in\Tc} \bigg\lVert \E\bigg[ \int_\tau^T  \left|\int_I F^n_t(v) G_n(\frac{i}{n},v)\d v
    -\int_I F^n_t(v) G(\frac{i}{n},v)\d v\right| \d t \bigg| \Fc_\tau\bigg] \bigg\rVert_{\L^\infty}\\
    \leq & C(\overline{\gamma},\|\theta\|, C_0, R)
    \int_I  \left|G_n(\frac{i}{n},v)-G(\frac{i}{n},v)\right| \d v\\
   &\sup_{\tau\in\Tc} \bigg\lVert \E\bigg[ \int_\tau^T \left|\int_I F^n_t(v) G(\frac{i}{n},v)\d v
    - \int_I (\theta_t^v)^{\top} g_t^v G(\frac{i}{n},v) \d v\right| \d t \bigg| \Fc_\tau\bigg] \bigg\rVert_{\L^\infty}\\
    \leq &C(\overline{\gamma},\|\theta\|, C_0, R) \int_I \sum_{j=1}^n \left(\|\Zc^{\frac{j}{n}}-\Zc^v \|_{\mathrm{BMO}}+\|\theta^\frac{j}{n}-\theta^v\|_{\mathrm{BMO}} + |\gamma^{\frac{j}{n}}-\gamma^v|\right) \delta_{(\frac{j-1}{n},\frac{j}{n}]}(v) \d v.
\end{align*} }
Combining \Cref{cdn:Convergence} with \Cref{lemma:Z} and \Cref{lemma:int-G}, the proof  is complete.
\end{proof}

\begin{lemma}\label{lemma:x}
    Under the Conditions in \Cref{thm:main.limit}, we have the following conclusion
    \begin{align*}
     \sumj \lambda_{i j}^n  \log(x^\frac{j}{n}) - \int_I \log(x^v) G(\frac{i}{n}, v)\d v \rightarrow 0, \quad \text{as} \quad n \rightarrow \infty.
    \end{align*} 
\end{lemma}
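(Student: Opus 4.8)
The plan is to rewrite the finite weighted average as an integral against the step graphon $G_n$ and then replace, one error term at a time, $G_n$ by $G$ and the step‑sampled initial data by the genuine initial data, bounding each replacement by one of the hypotheses in \Cref{cdn:G} and \Cref{cdn:Convergence}. All the estimates I produce will be uniform in $i$, so in fact I would prove the stronger statement $\max_{1\le i\le n}\big|\sum_{j\ne i}\lambda_{ij}^n\log(x^{j/n})-\int_I\log(x^v)G(\tfrac in,v)\,\d v\big|\to0$.

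First, using $\lambda_{ij}^n=\tfrac1{n-1}\lambda_{ij}$ with $\lambda_{ii}=0$, the identification $\lambda_{ij}=G_n(\tfrac in,\tfrac jn)$ from \Cref{cdn:G}, the bound $\|\log(x)\|<\infty$ from \Cref{cdn:Convergence}, and $0\le G_n\le1$, I would pass from the $\tfrac1{n-1}$‑average over $j\ne i$ to the $\tfrac1n$‑average over all $j$: since $\big|\tfrac1{n-1}-\tfrac1n\big|\sum_{j\ne i}\big|G_n(\tfrac in,\tfrac jn)\log(x^{j/n})\big|\le\tfrac{\|\log(x)\|}{n}$ and the reinstated diagonal term $\tfrac1n G_n(\tfrac in,\tfrac in)\log(x^{i/n})$ is also $O(1/n)$, we get
\[
\sum_{j\ne i}\lambda_{ij}^n\log(x^{j/n})=\frac1n\sum_{j=1}^n G_n\Big(\tfrac in,\tfrac jn\Big)\log(x^{j/n})+O(1/n),
\]
uniformly in $i$. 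Because $G_n$ is a step function (\Cref{cdn:G}), this Riemann sum is an exact integral, $\frac1n\sum_{j=1}^n G_n(\tfrac in,\tfrac jn)\log(x^{j/n})=\int_I G_n(\tfrac in,v)\,\log(x^{\lceil nv\rceil/n})\,\d v$, using that $G_n(\tfrac in,v)=G_n(\tfrac in,\tfrac jn)$ and $\lceil nv\rceil=j$ for $v\in(\tfrac{j-1}{n},\tfrac jn]$.

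Next I would split
\[
\int_I G_n\Big(\tfrac in,v\Big)\log(x^{\lceil nv\rceil/n})\,\d v-\int_I G\Big(\tfrac in,v\Big)\log(x^v)\,\d v
\]
into two pieces. The first, $\int_I G_n(\tfrac in,v)\big(\log(x^{\lceil nv\rceil/n})-\log(x^v)\big)\,\d v$, is dominated in absolute value by $\int_I|\log(x^{\lceil nv\rceil/n})-\log(x^v)|\,\d v\le\max_{1\le j\le n}\max_{u\in(\frac{j-1}{n},\frac jn]}|\log(x^{j/n})-\log(x^u)|$, which tends to $0$ by \Cref{cdn:Convergence}. The second, $\int_I\big(G_n(\tfrac in,v)-G(\tfrac in,v)\big)\log(x^v)\,\d v$, is bounded by $\|\log(x)\|\int_I|G_n(\tfrac in,v)-G(\tfrac in,v)|\,\d v$, and $\max_{1\le i\le n}\int_I|G_n(\tfrac in,v)-G(\tfrac in,v)|\,\d v\to0$ is exactly \Cref{lemma:int-G}. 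Collecting the three $o(1)$ bounds, all uniform in $i$, yields the claim.

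There is essentially no serious obstacle here: the lemma is a bookkeeping exercise combining \Cref{lemma:int-G} with the two step‑approximation hypotheses. The only point that needs a little attention is that every error bound be uniform in $i$, which is automatic because each bound is dominated either by a $\max_i$ quantity already handled in \Cref{lemma:int-G} or by an $n$‑independent constant from \Cref{cdn:G} and \Cref{cdn:Convergence}; if only convergence along the relevant subsequence $i=i_n$ of \Cref{thm:main.limit} were required, the identical argument applies.
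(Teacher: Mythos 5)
Your proposal is correct and follows essentially the same route as the paper: the same three-term decomposition (the $O(1/n)$ correction from $\tfrac{1}{n-1}$ vs.\ $\tfrac1n$ and the diagonal, the step-graphon error $\int_I|G_n(\tfrac in,v)-G(\tfrac in,v)|\,\d v$ handled by \Cref{lemma:int-G}, and the sampled-vs.-actual $\log(x)$ error handled by \Cref{cdn:Convergence}), with all bounds uniform in $i$. The only cosmetic difference is which factor ($G$ or $G_n$, sampled or actual $\log x$) you pair with each error term, which does not affect the estimates.
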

\begin{proof}
Applying an analogous decomposition and estimation technique to that utilized in the proof of \Cref{lemma:A}, we derive
\begin{align*}
    &\left| \sumj \lambda_{i j}^n  \log(x^\frac{j}{n}) - \int_I \log(x^v) G(\frac{i}{n}, v)\d v\right|\\
    \leq &\frac{1}{(n-1)n} \left| \sumall \lambda_{i j} \log(x^\frac{j}{n})\right|+\left|\int_I 
    \sumall \log(x^\frac{j}{n})\delta_{(\frac{j-1}{n},\frac{j}{n}]}(v) 
    \left(G_n(\frac{i}{n},v) - G(\frac{i}{n},v)\right)\d v\right|\\
    &+\left|\int_I \left(\sumall \log(x^\frac{j}{n})\delta_{(\frac{j-1}{n},\frac{j}{n}]}(v)- \log(x^v)\right) G(\frac{i}{n},v)\d v\right|\\
    \leq & \frac{1}{n} \|\log(x)\| + \|\log(x)\|  \int_I |G_n(\frac{i}{n},v)-G(\frac{i}{n},v)| \d v + 
    \int_I \sum_{j=1}^n |\log(x^{\frac{j}{n}})-\log(x^v)| \delta_{(\frac{j-1}{n},\frac{j}{n}]}(v) \d v .
\end{align*}
    
Thus the lemma follows directly from the application of \Cref{cdn:Convergence} in conjunction with \Cref{lemma:int-G}.
\end{proof}

\section{Inequality}

The following inequality is frequently used throughout the proof.

\begin{lemma}
\label{lemma:f-g}
Let $f, g \in \H_{\mathrm{BMO}}\left(E,\G\right)$. For each $\tau \in \mathcal{T}(\G)$, it holds that
\begin{equation}
\mathbb{E}\left[\left.\int_{\tau}^T \mathbb{E}[f_sg_s|\mathcal{F}_s^*] ds \right| \mathcal{G}^n_\tau \right] \leq \|f\|_{\H_{\mathrm{BMO}}\left(E,\G\right)} \cdot \|g\|_{\H_{\mathrm{BMO}}\left(E,\G\right)}.
\end{equation}
\end{lemma}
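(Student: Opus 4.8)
\textbf{Proof proposal for Lemma~\ref{lemma:f-g}.}
The plan is to reduce the claimed inequality to the Cauchy--Schwarz inequality applied at the level of conditional expectations, exploiting the tower property to handle the inner conditioning on $\mathcal{F}^*_s$. First I would note that the iterated-conditioning structure can be collapsed: since $\mathcal{G}_\tau \subseteq \mathcal{F}_s$ for $s \ge \tau$ (and $\mathcal{F}^*_s \subseteq \mathcal{F}_s$), the tower property gives
\begin{align*}
\mathbb{E}\!\left[\left.\int_\tau^T \mathbb{E}[f_s g_s \mid \mathcal{F}^*_s]\,ds \,\right|\, \mathcal{G}_\tau\right]
= \mathbb{E}\!\left[\left.\int_\tau^T f_s g_s \,ds \,\right|\, \mathcal{G}_\tau\right],
\end{align*}
so the intermediate conditioning on $\mathcal{F}^*_s$ may be dropped entirely and one is left with bounding $\mathbb{E}[\int_\tau^T f_s g_s\,ds \mid \mathcal{G}_\tau]$. (One should be a little careful about measurability/Fubini when interchanging $\int ds$ and $\mathbb{E}[\cdot\mid\mathcal{F}^*_s]$; since $f,g$ are in $\H_{\mathrm{BMO}}$, everything is square-integrable and progressively measurable, so this interchange is legitimate.)

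Next I would apply the pointwise Cauchy--Schwarz inequality $|f_s g_s| \le \frac12(\epsilon |f_s|^2 + \epsilon^{-1}|g_s|^2)$, or more directly the integral Cauchy--Schwarz inequality, to write
\begin{align*}
\mathbb{E}\!\left[\left.\int_\tau^T f_s g_s \,ds \,\right|\, \mathcal{G}_\tau\right]
\le \mathbb{E}\!\left[\left.\left(\int_\tau^T |f_s|^2 ds\right)^{1/2}\!\left(\int_\tau^T |g_s|^2 ds\right)^{1/2} \,\right|\, \mathcal{G}_\tau\right]
\le \left(\mathbb{E}\!\left[\left.\int_\tau^T |f_s|^2 ds \,\right|\, \mathcal{G}_\tau\right]\right)^{1/2}\!\left(\mathbb{E}\!\left[\left.\int_\tau^T |g_s|^2 ds \,\right|\, \mathcal{G}_\tau\right]\right)^{1/2},
\end{align*}
where the second step is the conditional Cauchy--Schwarz inequality. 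Each of the two conditional expectations on the right is, by definition of the $\H_{\mathrm{BMO}}$ norm, bounded $\mathbb{P}$--a.s.\ by $\|f\|_{\H_{\mathrm{BMO}}(E,\G)}^2$ and $\|g\|_{\H_{\mathrm{BMO}}(E,\G)}^2$ respectively (taking the essential supremum over $\tau \in \mathcal{T}(\G)$ in the definition). Multiplying these bounds yields the claim.

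I do not anticipate a genuine obstacle here; the only mildly delicate point is the justification of swapping the time integral with the conditional expectation $\mathbb{E}[\,\cdot\mid\mathcal{F}^*_s]$ inside the statement, which is why I would address it explicitly via a Fubini-type argument using the square-integrability afforded by the BMO assumption. Once that is in place, the result is an immediate two-line consequence of the tower property and Cauchy--Schwarz, and the constant $1$ in front of $\|f\|_{\H_{\mathrm{BMO}}}\|g\|_{\H_{\mathrm{BMO}}}$ is exactly what the conditional Cauchy--Schwarz step produces with no loss.
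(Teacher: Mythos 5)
Your first step is where the argument breaks. The tower property $\mathbb{E}[\mathbb{E}[X\mid\mathcal{A}]\mid\mathcal{B}]=\mathbb{E}[X\mid\mathcal{B}]$ requires the \emph{outer} $\sigma$-algebra to be contained in the \emph{inner} one, i.e.\ here you would need $\mathcal{G}_\tau\subseteq\mathcal{F}^*_s$; the inclusions you invoke ($\mathcal{G}_\tau\subseteq\mathcal{F}_s$ and $\mathcal{F}^*_s\subseteq\mathcal{F}_s$) only say that both are sub-$\sigma$-algebras of a common larger one, which gives nothing. In the present setting $\mathcal{G}_\tau$ contains idiosyncratic information that the common-noise filtration $\mathcal{F}^*_s$ does not, so the collapse is genuinely false. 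Concretely, take $d=1$, $\tau=t_0$ deterministic, $f_s=\operatorname{sign}(W^u_{t_0})\,\delta_{(t_0,T]}(s)$ and $g_s=\delta_{(t_0,T]}(s)$ (both bounded, $\mathbb{G}$-predictable, hence BMO): then $\mathbb{E}[f_sg_s\mid\mathcal{F}^*_s]=\mathbb{E}[\operatorname{sign}(W^u_{t_0})]=0$ by independence of $W^u$ and $W^*$, so the left-hand side of your identity is $0$, while $\mathbb{E}\big[\int_{t_0}^T f_sg_s\,\mathrm{d}s\mid\mathcal{G}_{t_0}\big]=(T-t_0)\operatorname{sign}(W^u_{t_0})\neq0$. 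Moreover no inequality between the two quantities holds in general in either direction (in this example the collapsed quantity is strictly smaller on $\{W^u_{t_0}>0\}$ and strictly larger on $\{W^u_{t_0}<0\}$), so the gap cannot be patched by weakening the equality to ``$\leq$'': your subsequent Cauchy--Schwarz/BMO estimates, which are correct in themselves, end up bounding a different object than the one in the lemma.

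For comparison, the paper does not prove this lemma directly; it cites \cite[Lemma A.1]{a:fu2020mean}. A correct direct proof must use the structure of the filtrations rather than pure measure theory: for instance, first apply conditional Cauchy--Schwarz \emph{inside} $\mathbb{E}[\,\cdot\mid\mathcal{F}^*_s]$ and in the $\mathrm{d}s$-integral, and then exploit that the post-$\tau$ increments of $W^*$ are independent of $\mathcal{G}_\tau$ (equivalently, $\mathcal{G}_\tau$ and $\mathcal{F}^*_s$ are conditionally independent given $\mathcal{F}^*_\tau$), which permits replacing the outer conditioning by an $\mathcal{F}^*$-conditioning of the quantity $\mathbb{E}\big[\int_\tau^T|f_s|^2\,\mathrm{d}s\mid\mathcal{G}_\tau\big]$, already bounded $\mathbb{P}$--a.s.\ by $\|f\|^2_{\H_{\mathrm{BMO}}}$. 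It is exactly this conditional-independence step (or the argument of the cited lemma) that your proposal is missing; the two Cauchy--Schwarz applications and the use of the BMO definition can then be kept as you wrote them.
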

\begin{proof}
    For the proof of this lemma, we refer the reader to \cite[Lemma A.1]{a:fu2020mean}.
\end{proof}

\bibliographystyle{abbrvnat}
\bibliography{ref}

\end{document}